\newtheorem{thm}{Theorem}[section] 
\newtheorem{lemma}[thm]{Lemma}
\newtheorem{cor}[thm]{Corollary} 
\newtheorem{claim}[thm]{Claim}
\newtheorem{prop}[thm]{Proposition}
\theoremstyle{definition}
\newtheorem{defn}[thm]{Definition}
\newtheorem{conj}[thm]{Conjecture}
\theoremstyle{definition}
\newtheorem*{const*}{Construction}
\newcommand{\N}{\mathcal{N}}
\DeclareMathOperator{\ex}{ex}
\definecolor{ao(english)}{rgb}{0.0,0.5,0.0}
\newcommand{\vc}[1]{\ensuremath{\vcenter{\hbox{#1}}}}
\newcommand\rt{\ensuremath{\mathrm{\bf RT}}}
\newcommand\be{\ensuremath{\mathrm{\bf \mathcal{G}}}}
\author{J\'ozsef Balogh\thanks{Department of Mathematics, University of Illinois at Urbana-Champaign, IL, 
USA. Email: \texttt{jobal@illinois.edu}.
Balogh was supported in part by NSF grants DMS-1764123 and RTG DMS-1937241, FRG DMS-2152488, the Arnold O. Beckman Research Award (UIUC Campus Research Board RB 24012).}
\qquad
Van Magnan\thanks{Department of Mathematical Sciences, University of Montana. Email: \texttt{van.magnan@umontana.edu}.}
\qquad
Cory Palmer\thanks{Department of Mathematical Sciences, University of Montana. Email: \texttt{cory.palmer@umontana.edu}.
Research supported by a grant from the Simons Foundation \#712036.

}}
\title{Generalized Ramsey-Tur\'an Numbers}
\begin{document}

\maketitle

\begin{abstract}
The Ramsey-Tur\'an problem for $K_p$ asks for the maximum number of edges in an $n$-vertex $K_p$-free graph with independence number $o(n)$. In a natural generalization of the problem,  cliques larger than the edge $K_2$ are counted. Let {\bf RT}$(n,\#K_q,K_p,o(n))$
denote the maximum number of copies of $K_q$ in an $n$-vertex $K_p$-free graph with independence number $o(n)$. Balogh, Liu and Sharifzadeh determined the asymptotics of {\bf RT}$(n,\# K_3,K_p,o(n))$. In this paper we will establish the asymptotics for counting copies of $K_4$, $K_5$, and for the case  $p \geq 5q$. We also provide a family of counterexamples to a conjecture of Balogh, Liu and Sharifzadeh.
\end{abstract}

\section{Introduction}

The foundational result in extremal graph theory is Tur\'an's theorem, which determines the maximum number of edges in an $n$-vertex graph with no clique $K_p$ as a subgraph. The unique \emph{extremal graph} attaining this maximum is the \emph{Tur\'an graph} $T(n,p-1)$, the complete balanced $(p-1)$-partite graph on $n$ vertices. The \emph{Tur\'an number} $\ex(n,F)$ of a graph $F$ is the maximum number of edges in an \emph{$F$-free} (i.e., no $F$ subgraph) $n$-vertex graph. The study of the function $\ex(n,F)$ is a central pursuit in extremal graph theory. When $F$ has chromatic number  $p \geq 3$, then the fundamental Erd\H os-Stone-Simonovits theorem \cite{ErSt, ErSi} gives $\ex(n,F) = (1+o(1))\ex(n,K_p)$. When $F$ has chromatic number $2$, then many questions remain open as detailed in the survey of F\"uredi and Simonovits~\cite{FuSi}.

There are many natural generalizations of the Tur\'an number; this paper is concerned with a common generalization of two such well-studied notions. Instead of counting the number of edges in an $F$-free graph, one may count the number of copies of some subgraph $H$. For example, the \emph{Erd\H os pentagon problem} asks for the maximum number of copies of the cycle $C_5$ in a triangle-free graph (see \cite{LiPf} for a history). Of particular importance here is Zykov's theorem \cite{zykov} which determines the maximum number of $K_q$ copies  in an $n$-vertex $K_p$-free graph. More generally, Alon and Shikhelman~\cite{AlSh} initiated the study of the general function
$\ex(n,\#H,F)$, i.e., the maximum number of $H$ copies in an $n$-vertex $F$-free graph.

Because the Tur\'an graph has independent sets of linear size, it is reasonable to ask for the maximum number of edges in a $K_p$-free graph with sublinear size independent sets. The \emph{Ramsey-Tur\'an problem} (likely first posed by Andr\'asfai~\cite{andras}) asks to determine $\rt(n,K_p,o(n))$, the maximum  number of edges in a $K_p$-free $n$-vertex graph with independence number $o(n)$.  Erd\H os and S\'os~\cite{ErSoRT} determined $\rt(n,K_p,o(n))$ for odd cliques $K_p$. When forbidding $K_4$, Szemer\'edi~\cite{Sz4} proved an upper bound of $\frac{1}{8}n^2+o(n^2)$ which was later matched by a construction of Bollob\'as and Erd\H os~\cite{BE-const}. The remaining even clique cases were determined by Erd\H os, Hajnal, S\'os and Szemer\'edi~\cite{EHSSz} (see Theorem~\ref{edge-thm} below for a summary of these results).
The survey of Simonovits and S\'os~\cite{rt-survey} includes a detailed history of Ramsey-Tur\'an problems. See \cite{BCMM, BaloghLenz, balogh2023weighted, liu2021geometric} for a variety of recent developments on problems in the area.

Combining the generalized Tur\'an and Ramsey-Tur\'an problem leads naturally to the following setting introduced by Balogh, Liu, Sharifzadeh \cite{balogh2017problems}. 
For graphs $H$ and $F$, the {\it generalized Ramsey-Tur\'an number},
\[
\rt(n, \#H, F, f(n)),
\]
is the maximum number of copies of $H$ in an $F$-free $n$-vertex graph with no independent set of size greater than $f(n)$.
The traditional Ramsey-Tur\'an function is when $H$ is an edge, $F$ is a complete graph and $f(n)=o(n)$.

Our goal is to find the asymptotics of $\rt(n, \#K_q, K_p, \alpha n)$. 
We use the mnemonic to ``quantify'' the $q$-clique $K_q$ while ``prohibiting'' the $p$-clique $K_p$.
\begin{defn}
The \emph{$K_q$-Ramsey-Tur\'an density} is the limit
    \[
    \mathfrak{R}_q(p) := \lim_{\alpha \to 0} \lim_{n \to \infty} \frac{\rt(n, \#K_q, K_p, \alpha n)}{n^q}.
    \]    
\end{defn}

In this terminology the original Ramsey-Tur\'an results for cliques are captured in the following theorem.

\begin{thm}[Erd\H os, Hajnal, S\'os, Szemer\'edi \cite{EHSSz}]\label{edge-thm}
If $p=2h\geq 4$ is even, then
    \[
    \mathfrak{R}_2(2h) =
    \frac{3h-5}{6h-4}.
    \]
If $p=2h+1\geq 5$ is odd, then
    \[
    \mathfrak{R}_2(2h+1) =  
       \left(\frac{1}{h}\right)^2 \binom{h}{2}.
    \]    
\end{thm}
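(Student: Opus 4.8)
To establish Theorem~\ref{edge-thm} I would proceed as follows. Since a copy of $K_2$ is an edge, $\rt(n,\#K_2,K_p,\alpha n)$ is the maximum number of edges in a $K_p$-free $n$-vertex graph with independence number at most $\alpha n$, so $\mathfrak{R}_2(p)$ is the classical Ramsey--Tur\'an density of $K_p$; the plan is to split into the odd and even cases. For odd $p=2h+1$, the value $(1/h)^2\binom{h}{2}=\tfrac{h-1}{2h}$ is exactly the Ramsey--Tur\'an density determined by Erd\H os and S\'os~\cite{ErSoRT}, so for the upper bound I would simply invoke their theorem. For the lower bound, partition $[n]$ into $h$ parts of size $n/h$, place a triangle-free graph of independence number $o(n)$ inside each part (such graphs exist since $R(3,t)=\Theta(t^2/\log t)$, and being sparse they contribute only $o(n^2)$ edges), and join all pairs of vertices lying in distinct parts; a clique meets each part in at most two vertices, so there is no $K_{2h+1}$, an independent set lies within a single part, so the independence number is $o(n)$, and the number of edges is $\binom{h}{2}(n/h)^2+o(n^2)$. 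Letting $\alpha\to0$ gives $\mathfrak{R}_2(2h+1)\ge(1/h)^2\binom{h}{2}$.

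For even $p=2h\ge4$: the case $h=2$, i.e.\ $\mathfrak{R}_2(4)=1/8$, is Szemer\'edi's upper bound~\cite{Sz4} together with the matching construction of Bollob\'as and Erd\H os~\cite{BE-const}, so assume $h\ge3$. I would use the Bollob\'as--Erd\H os graph as a black box: for every $m$ there is a $K_4$-free graph on $m$ vertices with independence number $o(m)$ and $(\tfrac18-o(1))m^2$ edges. For the lower bound, fix $c\in(0,1/(h-2))$, partition $[n]$ into parts $V_1,\dots,V_{h-2}$ of size $cn$ and one part $W$ of size $(1-(h-2)c)n$, place a triangle-free graph of independence number $o(n)$ in each $V_i$, place a Bollob\'as--Erd\H os graph on $W$, and join all pairs of vertices in distinct parts. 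A clique meets each $V_i$ in at most $2$ vertices and meets $W$ in at most $3$ vertices, so it has order at most $2(h-2)+3=2h-1$ and the graph is $K_{2h}$-free; an independent set lies inside a single part, so the independence number is $o(n)$. Writing $x=(h-2)c\in(0,1)$, the number of edges is $\bigl(f(x)+o(1)\bigr)n^2$, where
\[
f(x)=\tbinom{h-2}{2}\Bigl(\tfrac{x}{h-2}\Bigr)^{2}+x(1-x)+\tfrac18(1-x)^{2}=-\tfrac{3h-2}{8(h-2)}\,x^{2}+\tfrac34\,x+\tfrac18 .
\]
This $f$ is concave and attains its maximum on $[0,1]$ at $x^{\ast}=\tfrac{3(h-2)}{3h-2}$, with $f(x^{\ast})=\tfrac{3h-5}{6h-4}$; choosing $c=x^{\ast}/(h-2)$ and letting $\alpha\to0$ yields $\mathfrak{R}_2(2h)\ge\tfrac{3h-5}{6h-4}$.

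For the matching upper bound I would apply Szemer\'edi's regularity lemma to a $K_{2h}$-free graph $G$ with $\alpha(G)=o(n)$, form the reduced graph $R$ on the resulting $k$ clusters with a fixed density threshold, and use that, up to an $o(n^2)$ error, $e(G)$ is $(n/k)^2$ times the sum of the densities over all cluster pairs. Since $\alpha(G)=o(n)$, every cluster, and every linear-sized common neighbourhood inside a cluster, contains an edge; hence if three clusters are pairwise joined by $\varepsilon$-regular pairs of density more than $\tfrac12+\gamma$, a short codegree argument produces a $K_4$ across them, so the pairs of density exceeding $\tfrac12+\gamma$ form a triangle-free subgraph of $R$, and a more careful analysis should restrict the full density profile further. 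Feeding these constraints back in, the problem should reduce to a single ``heavy'' cluster-pair, which by Szemer\'edi's $K_4$ bound carries density at most $\tfrac18+o(1)$, with the remaining clusters forced into a Tur\'an-type pattern on at most $h-2$ blocks; optimising that pattern is exactly the computation of $f$ above and returns $\tfrac{3h-5}{6h-4}$.

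The hard part is precisely this last structural step, and it is what the argument of Erd\H os, Hajnal, S\'os and Szemer\'edi~\cite{EHSSz} supplies. A priori a $K_{2h}$-free graph with sublinear independence number could contain several cluster-pairs of density close to $\tfrac12$, and one must rule out configurations of many mutually half-dense clusters --- which would beat the claimed bound --- and establish that the only ``stable'' way to realise density $\tfrac12$ on a pair is essentially a blow-up of a Bollob\'as--Erd\H os graph. This is the obstacle Szemer\'edi overcame for $K_4$; extending it uniformly to all even cliques, keeping the dependence on $\alpha$ under control and gluing the single Bollob\'as--Erd\H os block correctly to the $h-2$ triangle-free blocks, is where the genuine difficulty lies.
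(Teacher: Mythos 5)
Your lower-bound constructions are correct and coincide with the paper's: for odd $p=2h+1$ you use $h$ triangle-free parts with all edges between parts (the graph $\mathcal{G}(n;h,h)$), and for even $p=2h$ you replace one part with a Bollob\'as--Erd\H os graph (the graph $\mathcal{G}(n;h,h-1)$), and your computation of $f(x)$ with $f(x^*)=\frac{3h-5}{6h-4}$ at $x^*=\frac{3(h-2)}{3h-2}$ is right. For the upper bound, however, you take a genuinely different route from the paper: you defer to the original density-pattern argument of Erd\H os--Hajnal--S\'os--Szemer\'edi, which is legitimate for an attributed theorem, whereas the paper deliberately re-derives the $q=2$ case from its own machinery. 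Concretely, the paper builds the weighted reduced graph $R$, applies the Weighted Zykov Theorem (Theorem~\ref{w-zykov}) to assume $R$ is a profile graph, uses Claims~\ref{cell-lemma1} and \ref{cell-lemma2} to force each part to have at most two cells, then uses Lemma~\ref{repartition-two-two} to show that two parts each with two cells can be re-partitioned into three single-cell parts without losing edges or triangles, forcing $s-t\le 1$; Lemma~\ref{p-large} then gives exactly your optimisation. The paper's approach buys a proof that scales uniformly to counting $K_q$ for all $q$, which is the point of the manuscript; your approach leans on the $q=2$-specific analysis of \cite{EHSSz}.

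One imprecision in your sketch of the upper bound worth flagging: you assert that ``the pairs of density exceeding $\tfrac12+\gamma$ form a triangle-free subgraph of $R$.'' In the framework of \cite{EHSSz} and of this paper, the correct statement is that a $K_{2h}$-free graph with sublinear independence number yields a reduced graph whose weight-$1$ edges (density $>\tfrac12+\delta$) span a $K_h$-free graph: a weight-$1$ clique of size $h$ is a $2h$-skeleton with $X=Y$. For $h\ge 4$ that graph is far from triangle-free; indeed in the extremal profile $\mathcal{G}(n;h,h-1)$ the weight-$1$ edges on the $h$ cells form $K_h$ minus a single edge. Likewise the reduction to ``a single heavy cluster-pair'' is a conclusion of the analysis, not something that can be assumed; what is true a priori is only the $p$-skeleton-free constraint. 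These points don't undermine your proposal, since you explicitly route the hard structural step through \cite{EHSSz}, but they would need to be repaired in a self-contained write-up, and they are precisely the things the paper's Weighted Zykov Theorem handles cleanly.
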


A main result of \cite{balogh2017problems} extends this classic theorem  to counting triangles.

\begin{thm}[Balogh, Liu, Sharifzadeh \cite{balogh2017problems}]\label{triangle-thm}
If $p=2h \geq 6$ is even, then
    \[
    \mathfrak{R}_3(2h) = \max_{0 \leq x \leq 1}
       \frac{1}{2}\left(\frac{x}{2}\right)^2(1-x)+x\binom{h-2}{2}\left(\frac{1-x}{h-2}\right)^2 + \binom{h-2}{3}\left(\frac{1-x}{h-2}\right)^3.
    \]  
If $p = 2h+1\geq 7$ is odd, then
    \[
    \mathfrak{R}_3(2h+1) =  
       \left(\frac{1}{h}\right)^3 \binom{h}{3}.
    \]    
\end{thm}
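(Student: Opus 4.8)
I would prove matching bounds on $\mathfrak{R}_3(p)$: the lower bound by explicit blow‑up constructions, and the upper bound by applying Szemer\'edi's regularity lemma and reducing to a finite clique‑density optimization over weighted reduced graphs, using the embedding machinery behind the Erd\H os--Hajnal--S\'os--Szemer\'edi theorem (Theorem~\ref{edge-thm}). For $p=2h+1$: partition $[n]$ into $h$ parts of size $n/h$, join distinct parts completely, and inside each part place a triangle‑free graph with independence number $o(n)$ (these exist, e.g.\ pseudorandom ones with $\alpha=O(\sqrt{n\log n})$). Each part induces a triangle‑free graph, so a clique meets it in at most two vertices and the graph is $K_{2h+1}$‑free; an independent set lies in a single part, so $\alpha=o(n)$; and a triangle‑free graph of independence number $o(n)$ has maximum degree $o(n)$, hence $o(n^2)$ edges, so all but $o(n^3)$ triangles have their three vertices in distinct parts, giving $\binom{h}{3}(n/h)^3=(\binom{h}{3}/h^3)n^3$. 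For $p=2h$ and a parameter $x\in[0,1]$: take a Bollob\'as--Erd\H os‑type graph on $A\cup B$ with $|A|=|B|=xn/2$ (a $K_4$‑free graph with $\alpha=o(n)$ whose $A$--$B$ bipartite density is $\tfrac12$), together with parts $V_1,\dots,V_{h-2}$ of size $\tfrac{1-x}{h-2}n$, joined completely to each other and to $A\cup B$, each inducing a triangle‑free graph with independence number $o(n)$. A clique uses at most three vertices of $A\cup B$ and at most two of each $V_i$, so its order is at most $2h-1$ and the graph is $K_{2h}$‑free, with $\alpha=o(n)$. Counting triangles by the location of their vertices — three distinct $V_i$'s; two distinct $V_i$'s plus a vertex of $A\cup B$; one $V_i$ plus an $A$--$B$ edge — recovers exactly the three summands of the displayed maximand up to $o(n^3)$; optimizing over $x$ gives the bound.

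\textbf{Upper bound: reduction to a density optimization.} Let $G$ be $K_p$‑free with $\alpha(G)\le\alpha n$. Apply the regularity lemma to obtain clusters $V_1,\dots,V_k$ and pair densities $d_{ij}$ (set $d_{ij}=0$ on irregular or very sparse pairs). Since $\alpha(G[V_i])\le\alpha n$, any subset of a cluster of size $>\alpha n$ spans an edge, and — choosing $\alpha$ small relative to the regularity parameters — this persists after a bounded number of neighborhood restrictions. The structural core is an EHSS‑type embedding lemma: one can greedily build a $K_p$ by extracting two vertices from each cluster in a set $S$ and one vertex from each cluster in a disjoint set $T$ with $2|S|+|T|=p$, provided all pairs within $S$ and all pairs between $S$ and $T$ have density $>\tfrac12$ and all pairs within $T$ have positive density — the threshold $\tfrac12$ being sharp precisely because of the Bollob\'as--Erd\H os graph, and matching on the other side Szemer\'edi's bound $\rt(n,K_4,o(n))\le n^2/8$ recast in the regularity language; moreover a cluster whose induced graph is itself Bollob\'as--Erd\H os‑like can contribute three clique vertices. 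Hence $G$ is $K_p$‑free if and only if $(d_{ij})$ admits no such configuration. The triangle counting lemma then gives that $G$ has $(\sum_{i<j<l}d_{ij}d_{il}d_{jl})(n/k)^3+o(n^3)$ triangles, since triangles with two or three vertices in a single cluster number only $O(n^3/k)=o(n^3)$.

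\textbf{Solving the optimization.} It remains to maximize $\sum_{i<j<l}d_{ij}d_{il}d_{jl}$ over admissible patterns — a Zykov‑type clique‑density problem (Theorem~\ref{edge-thm} and \cite{zykov}). For odd $p=2h+1$ the maximum is attained by the weighted blow‑up of $T(k,h)$ with all densities $1$, giving $\binom{h}{3}/h^3$: splitting the clusters into more than $h$ groups forces some densities down to $\le\tfrac12$, which strictly loses, and a Bollob\'as--Erd\H os‑type link gains nothing because the clique budget $2h$ is already used up. For even $p=2h$ the budget is only $2h-1$, so the optimal admissible pattern replaces one density‑$1$ group by a density‑$\tfrac12$ Bollob\'as--Erd\H os pair, and optimizing the relative size $x$ of that pair reproduces the stated maximum. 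A useful auxiliary input is that every vertex neighborhood is $K_{p-1}$‑free with independence number $o(n)$, so its edge count is controlled by Theorem~\ref{edge-thm}; this can streamline the density optimization.

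\textbf{The main obstacle.} The hardest part is establishing the exact admissibility criterion for density patterns — in particular the sharp $\tfrac12$ threshold, and how a cluster's \emph{internal} structure (edgeless‑like versus edge‑rich versus Bollob\'as--Erd\H os‑like) limits how many clique vertices it can supply. This is precisely where the geometry of the Bollob\'as--Erd\H os construction \cite{BE-const} and Szemer\'edi's $K_4$‑Ramsey--Tur\'an upper bound \cite{Sz4} must be imported into the regularity framework; with only the naive reduced‑graph bound one obtains merely $\binom{\lceil p/2\rceil-1}{3}/(\lceil p/2\rceil-1)^3$. The secondary, more routine, difficulty is the extremal optimization itself: showing that a single Bollob\'as--Erd\H os pair — not several, and not an iterated construction — is optimal for counting triangles, and then carrying out the resulting one‑variable maximization.
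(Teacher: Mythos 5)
Your overall strategy matches the paper's: lower bound by explicit blow-up/Bollob\'as--Erd\H os constructions (which you describe correctly, and whose triangle count does yield the stated maximand up to $o(n^3)$ after noting that edges inside each Bollob\'as--Erd\H os class and inside each triangle-free part are $o(n^2)$), and an upper bound via the regularity lemma reducing to a weighted-clique-density optimization over $p$-skeleton-free reduced graphs. The ``$(X,Y)$ with $2|S|+|T|=p$'' admissibility criterion you sketch is precisely the $p$-skeleton notion the paper uses, and the $\tfrac12$ threshold plus the embedding lemma is indeed imported from Erd\H os--Hajnal--S\'os--Szemer\'edi; the paper cites this as a black box (its Lemma 5.2).

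The genuine gap is in your ``Solving the optimization'' step, which you label as ``more routine'' but is in fact the technical core. You assert that for odd $p=2h+1$ ``splitting the clusters into more than $h$ groups forces some densities down to $\le\tfrac12$, which strictly loses,'' and that for even $p=2h$ the optimum ``replaces one density-$1$ group by a density-$\tfrac12$ Bollob\'as--Erd\H os pair.'' Neither claim is self-evident: the space of $p$-skeleton-free weighted graphs is large, and the inequality that extra density-$\tfrac12$ structure ``strictly loses'' is precisely what has to be proved. The paper's contribution here is a \emph{Weighted Zykov Theorem} (Theorem~\ref{w-zykov}), established by a two-step Zykov-style symmetrization (first making the weighted reduced graph ``cellular,'' then eliminating $(\tfrac12,\tfrac12,1)$-triangles), which reduces the admissible patterns to \emph{profile graphs}: a partition into parts, further partitioned into cells, with weight $1$ across parts, $\tfrac12$ across cells within a part, $0$ inside cells. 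The paper then needs the part-plus-cell relation $s+t=p-1$ (Claim~\ref{part-plus-cell}), cell-count bounds (Claims~\ref{cell-lemma1}, \ref{cell-lemma2}), and a local repartition lemma (Lemma~\ref{repartition-two-two}, showing that merging two parts of two cells into three single-cell parts increases edges and triangles) to reduce to $s-t\le 1$, at which point Lemma~\ref{p-large} gives the stated formulas. None of this is in your proposal; the assertion that the optimum is achieved by $T(k,h)$ (resp. $T(k,h-1)$ plus one BE pair) is exactly the output of this machinery and is not a consequence of Zykov's unweighted theorem alone. You identify the embedding/admissibility criterion as ``the main obstacle,'' but that part is the well-known EHSS input; the obstacle you call ``secondary'' and ``routine'' is where the real work lies.

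Your suggestion to control the problem by inducting on vertex neighborhoods (each being $K_{p-1}$-free with $\alpha=o(n)$, so Theorem~\ref{edge-thm} applies) is a plausible alternative route but is likewise unsubstantiated: one would still need to rule out exotic density patterns and to match the exact constant, and the paper does not take this path. As written, your upper-bound argument does not constitute a proof.
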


Our first two main theorems extend this to counting cliques $K_4$ and $K_5$. Note that the bounds continue the same discrepancy between the $p$ even and odd cases. However, for $q\geq 4$ and $p$ small $\mathfrak{R}_q(p)$ exhibits distinct behavior. For counting copies of $K_4$ we prove:

\begin{thm}\label{k4-thm}
If $6 \leq p \leq 8$, then
\[
\mathfrak{R}_4(6) = \left(\frac{1}{4}\right)^4\left(\frac{1}{2}\right)^6,  \qquad    \mathfrak{R}_4(7) =  \left(\frac{1}{4}\right)^4\left(\frac{1}{2}\right)^2,  \qquad \mathfrak{R}_4(8) = \left(\frac{1}{4}\right)^4\left(\frac{1}{2}\right).
\]
If $p=2h \geq 8$ is even, then
    \[
    \mathfrak{R}_4(2h) = \max_{0 \leq x \leq 1}
       \frac{1}{2}\left(\frac{x}{2}\right)^2\binom{h-2}{2}\left(\frac{1-x}{h-2}\right)^2+x\binom{h-2}{3}\left(\frac{1-x}{h-2}\right)^3 + \binom{h-2}{4}\left(\frac{1-x}{h-2}\right)^4.
    \]
    If $p = 2h+1\geq 9$ is odd, then
    \[
    \mathfrak{R}_4(2h+1) = \left(\frac{1}{h}\right)^4\binom{h}{4}.       
    \]    
\end{thm}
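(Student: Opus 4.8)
\emph{Proof idea.} The plan is to establish matching upper and lower bounds in each of the three ranges of $p$. The lower bounds come from explicit constructions; the upper bounds come from the regularity method, following the strategy of \cite{balogh2017problems} for counting triangles.

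\textbf{Constructions.} For odd $p=2h+1\ge 9$, take the complete $h$-partite graph with parts of size $n/h$ and insert into each part a triangle-free graph of independence number $o(n)$. This graph is $K_{2h+1}$-free with independence number $o(n)$, and since a copy of $K_4$ using two vertices of one part would have to use one of the only $o(n^2)$ internal edges, it contains $\binom h4(n/h)^4+o(n^4)$ copies of $K_4$. For even $p=2h\ge 8$, take a single Bollob\'as--Erd\H os-type blob $B$ on $xn$ vertices -- a $K_4$-free graph of independence number $o(n)$ with $\bigl(\tfrac18+o(1)\bigr)|B|^2$ edges and only $o(n^3)$ triangles, as used for the $K_2$ and $K_3$ Ramsey--Tur\'an theorems \cite{BE-const,EHSSz} -- and complete-join it to $h-2$ triangle-free graphs of independence number $o(n)$, each on $\tfrac{(1-x)n}{h-2}$ vertices. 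A clique meets $B$ in at most $3$ vertices and each other part in at most $2$, so the graph is $K_{2h}$-free; sorting copies of $K_4$ by the number of their vertices lying in $B$ (which can only be $0$, $1$, or $2$: the $3$-in-$B$ count is $o(n^4)$ since $B$ has $o(n^3)$ triangles, and there are no $4$-in-$B$ copies) reproduces exactly the three terms of the maximand, after which one optimises over $x$. For $p\in\{6,7\}$ the displayed formulas degenerate ($\binom{h-2}{4}$, resp.\ $\binom h4$, vanishes), and a genuinely different construction is needed: several Bollob\'as--Erd\H os-type bipartite gadgets placed between a bounded number of blobs, arranged so that the whole graph is $K_p$-free with sublinear independence number while a typical transversal $K_4$ survives the pseudo-random gadgets with probability $(1/2)^6$, resp.\ $(1/2)^2$, yielding $(1/4)^4(1/2)^6$ and $(1/4)^4(1/2)^2$. (The value for $p=8$ is also recovered from the even formula at $h=4$, where $\max_{0\le x\le1}\tfrac{x^2(1-x)^2}{32}=(1/4)^4(1/2)$, attained at $x=\tfrac12$.)

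\textbf{Upper bound.} Given a $K_p$-free graph $G$ with $\alpha(G)\le\alpha n$, apply Szemer\'edi's regularity lemma, then delete all edges that are inside a cluster or lie in an irregular or low-density pair; this destroys only $o(n^4)$ copies of $K_4$. Now study the reduced (cluster) graph $R$ together with the induced structure on each cluster. Two inputs drive the analysis. First, because $\alpha(G)=o(n)$, every linear-sized set of vertices with sublinear independence number contains an edge, so one can grow a clique through any clique of $R$ by iterated embedding in regular pairs, taking two vertices per cluster rather than one; carrying this out carefully shows that along any clique of $R$ the sum of the ``clique potentials'' of the clusters it meets is at most $p-1$, which forces both the clique number of $R$ and the admissible internal type of each cluster to be bounded -- essentially a cluster is triangle-free-sparse (potential $2$) or $K_4$-free but dense (``Bollob\'as--Erd\H os-type'', potential $3$), and Bollob\'as--Erd\H os-type clusters are pushed apart in $R$ when $p$ is small. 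Second, within a cluster of potential $r$ the number of copies of $K_4$ is at most $\mathfrak{R}_4(r+1)\,|V_i|^4+o(n^4)$ and the number of copies of $K_3$ is governed by $\mathfrak{R}_3(r+1)$; the base values $\mathfrak{R}_4(4)=\mathfrak{R}_4(5)=0$ and $\mathfrak{R}_3(4)=0$ make the ``many vertices in one cluster'' contributions lower order. Hence the number of copies of $K_4$ in $G$ equals, up to $o(n^4)$, a fixed multilinear function of the relative cluster sizes and of a bounded combinatorial scheme ($R$ plus the cluster types), and the theorem reduces to maximising this function over all realisable schemes. Solving that optimisation yields the stated values, and this is where $p\le 8$ separates from $p\ge 9$: for small $p$ the optimal scheme is a multi-blob Bollob\'as--Erd\H os configuration rather than the ``one blob plus Tur\'an parts'' configuration that wins for large $p$.

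\textbf{Main obstacle.} The crux -- and the hardest step -- is the upper bound's structural classification together with the ensuing finite optimisation. One has to pin down exactly how regular dense pairs and Bollob\'as--Erd\H os-type clusters combine when building cliques (hence which schemes $K_p$-freeness permits), which in turn requires the threshold values $\mathfrak{R}_q(q+1)=0$ and the behaviour of $\mathfrak{R}_3$ and $\mathfrak{R}_4$ for $p$ just above this threshold; and one then has to solve the resulting extremal problem, ruling out every mixed configuration. For $p\le 8$ this is a delicate case analysis, and it is the reason the answer for counting $K_4$ -- unlike the answer for counting $K_3$ in Theorem~\ref{triangle-thm} -- acquires a separate small-$p$ regime.
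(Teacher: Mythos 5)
Your plan shares the paper's broad shape --- regularity plus explicit $\mathcal{G}(n;s,t)$-type constructions --- but the decisive technical content of the upper bound is missing, and the proposal itself flags this ("the crux\dots is the upper bound's structural classification together with the ensuing finite optimisation") without supplying a mechanism to carry it out.

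Concretely, the paper passes to a \emph{weighted} reduced graph $R$ whose edges carry weight $1$ (density $\ge 1/2+\delta$), weight $1/2$ (density $\ge\delta$), or weight $0$, shows that $R$ is $p$-skeleton-free, and then proves a \emph{Weighted Zykov Theorem}: among $p$-skeleton-free weighted graphs maximizing the total $K_q$-weight $\N_q(R)$ there is always a ``profile graph'' (complete multipartite in weight-$1$ edges, with each part subdivided into equal cells joined by weight-$1/2$ edges). This is the step that collapses the ``fixed multilinear function\dots over all realisable schemes'' into a genuinely finite, parametric optimization. After that, a sequence of profile lemmas (Claims~\ref{part-plus-cell}--\ref{cell-lemma2}, Lemma~\ref{repartition-two-two}) specific to $q=4$ shows that no part needs three cells (a local $3\to 2$ re-balancing argument with an explicit inequality involving $\N_2(W)$ and $|V(W)|$) and that two two-cell parts can be merged into three one-cell parts, so one may take $s-t\le 1$; then Lemma~\ref{p-large} produces the displayed formulas. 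Your ``clique potential'' sketch has no analogue of the symmetrization step, no argument that the reduced graph can be assumed to be of the profile form, and no argument that the optimal profile is the one-blob configuration rather than, say, two blobs. Those are exactly the inequalities the paper proves, and nothing in the proposal substitutes for them.

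Two smaller points. For $p\in\{6,7\}$ the paper does not need ``a genuinely different construction'': these are the cases $p=q+2$ and $p=q+3$ of Theorem~\ref{q-plus}(a), and the extremal graphs are $\mathcal{G}(n;4,1)$ (a $4$-Bollob\'as--Erd\H os graph) and $\mathcal{G}(n;4,2)$ (a complete bipartite graph with a Bollob\'as--Erd\H os graph embedded in each side); your ``several bipartite gadgets'' description is too vague to verify $K_p$-freeness or the clique count. Second, bounding $K_4$-counts inside a cluster by ``$\mathfrak{R}_4(r+1)|V_i|^4$'' is not how the paper proceeds and risks circularity unless you carefully restrict $r+1<p$; the paper sidesteps this by never looking inside clusters at all --- the weighted reduced-graph count $\N_q(R)(n/r)^q$ already upper-bounds $\N_q(G)$ via the counting lemma.
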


 For the number of copies of $K_5$, we prove:

\begin{thm}\label{k5-thm}
If $7 \leq p \leq 11$, then
\[
\mathfrak{R}_5(7) =\left(\frac{1}{5}\right)^5\left(\frac{1}{2}\right)^{10},   \qquad \mathfrak{R}_5(8) = \left(\frac{1}{5}\right)^5\left(\frac{1}{2}\right)^{4},     
\]
\[
\qquad \mathfrak{R}_5(9) = \left(\frac{1}{5}\right)^5\left(\frac{1}{2}\right)^{2}, \qquad  \mathfrak{R}_5(10) = \binom{6}{5}\left(\frac{1}{6}\right)^5\left(\frac{1}{2}\right)^{2},  
\]
\[
\mathfrak{R}_5(11) = \max_{0 \leq x \leq 1} \left(\frac{x}{4}\right)^4\left(\frac{1}{2}\right)^2(1-x) + 4\left(\frac{x}{4}\right)^3\left(\frac{1}{2}\right)\left(\frac{1-x}{2}\right)^2 = \frac{675+228\sqrt{15}}{480200}.
\]

If $p=2h \geq 12$ is even, then
    \[
    \mathfrak{R}_5(2h) = \max_{0 \leq x \leq 1}
       \frac{1}{2}\left(\frac{x}{2}\right)^2\binom{h-2}{3}\left(\frac{1-x}{h-2}\right)^3+x\binom{h-2}{4}\left(\frac{1-x}{h-2}\right)^4 + \binom{h-2}{5}\left(\frac{1-x}{h-2}\right)^5.
    \]
If $p = 2h+1\geq 13$ is odd, then
    \[
    \mathfrak{R}_5(2h+1) = \left(\frac{1}{h}\right)^5\binom{h}{5}.       
    \]    
\end{thm}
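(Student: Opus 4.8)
The plan is to prove Theorem~\ref{k5-thm} by matching lower and upper bounds, following the template set by the proofs of Theorems~\ref{triangle-thm} and~\ref{k4-thm}. The lower bounds are explicit constructions; the upper bounds come from the Szemer\'edi regularity lemma together with a reduction of $\mathfrak{R}_5(p)$ to a finite optimization over ``weighted patterns,'' which is then solved case by case.

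\textbf{Constructions.} Each extremal graph is a complete join of \emph{super-parts}, where every super-part is a linear-size class carrying a sparse triangle-free graph of independence number $o(n)$, and where between two super-parts we put either a complete bipartite graph (a \emph{full} edge) or the density-$\tfrac12$ bipartite graph between the two halves of a Bollob\'as--Erd\H os graph (a \emph{half} edge). The clique bookkeeping is: a super-part has clique number $2$; a full edge between $V_i,V_j$ allows at most $4$ clique vertices in $V_i\cup V_j$; a half edge allows at most $3$ (the Bollob\'as--Erd\H os graph is $K_4$-free); and since that graph has only $o(n^3)$ triangles, at most two of the five vertices of any counted $K_5$ fall in a single half-edge gadget. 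For generic even $p=2h$, take one Bollob\'as--Erd\H os half-edge on $xn$ vertices and $h-2$ further super-parts of size $\tfrac{(1-x)n}{h-2}$ joined fully to everything; the clique number is $3+2(h-2)=2h-1$, so the graph is $K_{2h}$-free, and summing the $K_5$-count over how the five vertices distribute among the gadgets reproduces the displayed maximand. For generic odd $p=2h+1$, take the balanced $h$-partite blow-up with only full edges (clique number $2h$), whose transversal $K_5$'s number $\binom{h}{5}h^{-5}n^5$. For each small case $7\le p\le 11$ the winning graph is a weighted blow-up of $K_5$ or $K_6$ realized as a \emph{multipartite} Bollob\'as--Erd\H os graph with a prescribed set of half-edges: all ten pairs for $p=7$; a triangle of half-edges together with a disjoint half-edge for $p=8$; two disjoint half-edges for $p=9$; three disjoint half-edges among six parts for $p=10$; and two disjoint half-edges inside a four-part block joined fully to a two-part block for $p=11$. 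In each case $K_p$-freeness follows from the bookkeeping (e.g.\ for $p=7$, at most one super-part can contribute two vertices to a clique, so the clique number is $6$), and the $K_5$-count is a polynomial in the weights that one maximizes; for $p=11$ this is a one-variable maximization with optimum $\tfrac{675+228\sqrt{15}}{480200}$.

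\textbf{Upper bound.} Let $G$ be an $n$-vertex $K_p$-free graph with $\alpha(G)<\alpha n$. Apply the regularity lemma, discard sparse and irregular pairs, and iterate the ``zooming in'' on dense regular pairs, using at each step that a triangle-free graph with independence number $o(n)$ has $o(n^2)$ edges and $o(n^3)$ triangles; after boundedly many refinements one concludes that, up to $o(n^5)$ copies of $K_5$, the graph $G$ is a blow-up of a bounded weighted graph $H$ with edges labelled full (density $1$) or half (density $\tfrac12$, Bollob\'as--Erd\H os type). Feeding the $K_p$-freeness of $G$ through the embedding lemma shows $H$ is \emph{admissible}: every assignment of a value $c_i\in\{0,1,2\}$ to each vertex of $H$ that is supported on a clique of $H$ and satisfies $c_i+c_j\le 3$ across every half-edge has $\sum_i c_i\le p-1$. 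Consequently
\[
\mathfrak{R}_5(p)=\max\Bigl\{\sum_{\substack{S\subseteq V(H),\ |S|=5\\ S\text{ a clique of }H}}\ \Bigl(\prod_{i\in S}w_i\Bigr)\Bigl(\prod_{\{i,j\}\subseteq S}d_{ij}\Bigr)\ :\ H\text{ admissible},\ \textstyle\sum_i w_i=1\Bigr\},
\]
with $d_{ij}=1$ for full and $d_{ij}=\tfrac12$ for half edges. It remains to evaluate this maximum. For generic $p$ a smoothing/convexity argument (balancing equal-role parts and deleting superfluous half-edges, exactly as for $q=3,4$) shows the optimum is the generic pattern above and yields the displayed formulas. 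For $7\le p\le 11$ one instead enumerates the finitely many admissible $H$ on a bounded number of super-parts, performs the at-most-one-variable maximization for each, and takes the largest; the sporadic optimal patterns and their closed-form values fall out of this comparison.

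\textbf{The main obstacle.} The crux is the upper bound, in two places. First, converting ``$G$ is $K_p$-free with $\alpha(G)=o(n)$'' into ``$G$ is approximately a blow-up of an admissible $H$'' requires pushing regularity through dense pairs and controlling how sublinear independence number restricts their internal structure --- this is the heavy machinery behind Theorems~\ref{triangle-thm}--\ref{k4-thm}, and getting the half-edge capacity ($\le 3$) and the $o(n^3)$-triangles input exactly right is what makes the admissible optimum coincide with the best construction realizable by a multipartite Bollob\'as--Erd\H os graph, so that no pattern is missed or spuriously allowed. Second, unlike for $q=3$ and for $p$ large, when $p\in\{10,11\}$ (and, in spirit, $p\in\{7,8,9\}$) the optimal pattern is \emph{not} the one predicted by the even/odd formula, so one cannot appeal to monotonicity or a single clean inequality: the finitely many candidate patterns genuinely must be generated and their maximizations compared, which is precisely where the break from the $p$-parity dichotomy --- and the algebraic value $\tfrac{675+228\sqrt{15}}{480200}$ --- comes from.
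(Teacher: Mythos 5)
Your constructions match the paper's $\mathcal{G}(n;s,t)$ family (your ``super-parts'' are the paper's cells, your ``full/half edges'' correspond to inter-part/intra-part pairs), and your ``admissible $H$'' condition is a correct reformulation of the paper's $p$-skeleton-freeness, so the overall plan --- regularity, weighted reduced graph, finite optimization --- is the same. The values you list for $p\in\{7,\dots,11\}$ and the generic even/odd formulas all agree with the theorem, and you correctly identify that $p=10,11$ are the cases where the naive parity pattern fails.

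However, there is a real gap at the heart of the upper bound: you assert that $G$ is ``a blow-up of a \emph{bounded} weighted graph $H$'' and that one can ``enumerate the finitely many admissible $H$ on a bounded number of super-parts,'' but the regularity lemma only produces a reduced graph on $r\le M(\varepsilon)$ parts, and $M(\varepsilon)$ is huge --- nothing in your argument collapses this to a bounded pattern. This is exactly what the paper's Weighted Zykov Theorem (Theorem~\ref{w-zykov}) supplies: via a two-step symmetrization (first symmetrizing nonadjacent vertices to make the weighted graph ``cellular,'' then resolving $(\tfrac12,\tfrac12,1)$-triangles), it shows an extremal $p$-skeleton-free weighted graph can be taken to be a \emph{profile graph}, whose structure is determined by a profile $(s_1,\dots,s_t)$ with $s+t\le p-1$ and by part sizes, turning the optimization into one over boundedly many types. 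Your phrase ``a smoothing/convexity argument (balancing equal-role parts and deleting superfluous half-edges)'' gestures at this, but the required symmetrization is neither stated nor proved, and without it the right-hand side of your displayed maximization is over an unbounded family and cannot be evaluated by enumeration. Moreover, even after the reduction to profile graphs the paper still needs several more steps that you elide: the cell lemmas (Claims~\ref{cell-lemma1},~\ref{cell-lemma2} and Corollary~\ref{cell-cor}) to cap the number of cells per part, the relative-cell-size claim~\ref{relative-cell}, Lemma~\ref{repartition-two-two}, and the $q=5$-specific Lemma~\ref{no31} ruling out a part with three cells coexisting with a part with one cell --- these are what actually cut the candidate profiles down to the handful that appear in the case analysis for $p=9,\dots,13$ and $p\ge 14$. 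Your proposal names the right endpoint but leaves the main engine undescribed.
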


 When $p=q+1$, then it is easy to see that $\mathfrak{R}_q(q+1) = 0$. Indeed, as there is no $(q+1)$-clique, the common neighborhood of any $q-1$ vertices is an independent set and thus has sublinear order. Therefore, the number of $K_q$ copies is $o(n^q)$ in such an $n$-vertex graph.
In~\cite{balogh2017problems}, the authors determined $\mathfrak{R}_q(q+2)$. We extend this theorem.

\begin{thm}\label{q-plus}
\begin{enumerate}
    \item[(a)]

For $q\geq 2$,
    \[
    \mathfrak{R}_{q}(q+2) = \left(\frac{1}{q}\right)^q\left(\frac{1}{2}\right)^{\binom{q}{2}}
 \qquad \text{and} \quad
    \mathfrak{R}_{q}(q+3) =  \left(\frac{1}{q}\right)^q\left(\frac{1}{2}\right)^{\binom{\lfloor q/2\rfloor}{2}+ \binom{\lceil q/2 \rceil}{2}}.
    \]
\item[(b)] For $q \geq 3$, 
    \[
    \mathfrak{R}_{q}(q+4) = \left(\frac{1}{q}\right)^q\left(\frac{1}{2}\right)^{\binom{\lfloor q/3\rfloor}{2}+ \binom{\lfloor (q+1)/3 \rfloor}{2} +\binom{\lfloor (q+2)/3 \rfloor}{2}}.
    \]
\end{enumerate}
\end{thm}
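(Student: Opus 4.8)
For each admissible ``block pattern'' I would build a graph and then optimize. Recall (this is essentially the construction of Balogh--Liu--Sharifzadeh behind $\mathfrak{R}_q(q+2)$) that for every $m\ge 1$ there is a \emph{generalized Bollob\'as--Erd\H os graph} $\be_m$: it has $m$ vertex classes of equal size, each class spanning a triangle-free graph of sublinear independence number, every pair of classes joined by a Bollob\'as--Erd\H os-type quasirandom bipartite graph of density $\tfrac12+o(1)$ with the pair-events essentially independent; $\be_m$ has clique number exactly $m+1$, independence number $o(\cdot)$, and $(1+o(1))(\text{class size})^m(1/2)^{\binom m2}$ copies of $K_m$ (the transversals). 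Now take $r:=p-1-q$, pick any partition $q=m_1+\dots+m_r$ with $m_i\ge 1$, form the join $\be_{m_1}\ast\dots\ast\be_{m_r}$, and scale so that all $q$ resulting classes have size $n/q$. Since clique numbers add under join, this graph is $K_{q+r+1}=K_p$-free; its independence number is $o(n)$; and up to $o(n^q)$ every copy of $K_q$ is a transversal of the $q$ classes, gaining a factor $\tfrac12$ for each of the $\binom{m_i}{2}$ pairs of classes inside a common block and a factor $1$ for each pair in different blocks. Hence it has $(1+o(1))\,(1/q)^q\,(1/2)^{\sum_i\binom{m_i}2}\,n^q$ copies of $K_q$ (equal class sizes being optimal by AM--GM, which is why no $\max$ over $x$ survives). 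Minimizing $\sum_i\binom{m_i}2$ over $r$-part partitions of $q$ is one line of smoothing: if $m_i\ge m_j+2$, replacing $(m_i,m_j)$ by $(m_i-1,m_j+1)$ strictly decreases $\sum m_i^2$, so the balanced partition wins; for $r=1,2,3$ this is the partition into $q$; into $\lfloor q/2\rfloor,\lceil q/2\rceil$; and into $\lfloor q/3\rfloor,\lfloor(q+1)/3\rfloor,\lfloor(q+2)/3\rfloor$, giving the three claimed values. (For part (b) one needs $q\ge 3$ so that three nonempty parts exist; and one also checks that using more than $q$ classes is suboptimal for $p\le q+4$, since the savings from the $\binom{m_i}2$-exponent get diluted by summing over $q$-subsets of classes — this is exactly why the statement stops at $q+4$.)

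\textbf{Upper bound.} Fix $\varepsilon'>0$; I would show $\rt(n,\#K_q,K_p,\alpha n)\le (c_p+\varepsilon')n^q$ for $\alpha$ small and $n$ large, $c_p$ the claimed constant. Apply Szemer\'edi's regularity lemma to a $K_p$-free $n$-vertex $G$ with $\alpha(G)\le\alpha n$, with regularity parameter small in terms of $\varepsilon'$; this produces $M=M(\varepsilon')$ clusters, and taking $\alpha\ll 1/M$ we may assume $\alpha(G[C_i])=o(|C_i|)$ for every cluster. Pass to the weighted reduced graph $R$ (clusters as vertices, pair-densities as weights, discarding weights below a threshold $d$). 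The basic embedding lemma is: if clusters $C_1,\dots,C_s$ are pairwise of weight $\ge d$ then $G\supseteq K_{s+1}$ — pick $w_j\in C_j$ ($j\ge 2$) greedily so that $w_2,\dots,w_s$ form a $K_{s-1}$ whose common neighborhood in $C_1$ still has positive density, then use $\alpha(G[C_1])=o(|C_1|)$ to find an edge of $G$ inside that neighborhood; and this can be pushed further (two vertices from extra clusters) when the relevant densities exceed $\tfrac12+\gamma$ or when classes sit inside a Bollob\'as--Erd\H os-coherent block. The heart is a stability statement extending the one in \cite{balogh2017problems}: a $K_p$-free $G$ with $\alpha(G)=o(n)$ has, up to $o(n)$ vertices, a reduced structure that is a join of at most $p-1-q$ Bollob\'as--Erd\H os-type blocks with ``clique budget'' (total number of classes)$\,+\,$(number of blocks)$\,\le p-1$; in particular copies of $K_q$ using two or more vertices from one class are negligible at the $n^q$ scale. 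Granting this, $\#K_q(G)$ is at most the weighted transversal count over such block structures, which is exactly the quantity maximized in the first part, namely $c_p n^q$.

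\textbf{Main obstacle.} The difficulty is entirely in the stability step of the upper bound: turning ``$K_p$-free $+$ sublinear independence number'' into the block decomposition. The delicate point is controlling the \emph{doubling} phenomenon — when two vertices of one class can simultaneously lie in a large clique — which is governed by pair densities crossing $\tfrac12$ and by Bollob\'as--Erd\H os geometry, and is precisely what forces the block structure and pins block densities at $\tfrac12$: a density-$\tfrac12$ pair that is ``too random'' would allow many classes to be doubled at once, producing a forbidden clique, so large blocks must be Bollob\'as--Erd\H os-like. This is where \cite{balogh2017problems}, which treats a single block ($p=q+2$), has to be extended to joins of blocks. By contrast, the embedding lemmas, the AM--GM for class sizes, and the smoothing that identifies the balanced partition (yielding, for $p=q+4$, the three-term floor expression) are routine.
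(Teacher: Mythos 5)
Your construction is the same as the paper's (a join of generalized Bollob\'as--Erd\H os blocks, i.e.\ a graph in $\mathcal{G}(n;q,p-q-1)$), and the smoothing argument for minimizing $\sum_i\binom{m_i}{2}$ over $r$-part partitions of $q$ is correct and gives precisely the claimed exponents. This part is fine.

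\textbf{Upper bound: genuine gap.} You set up the regularity/reduced-graph framework correctly, but the step you flag as ``the heart'' and then explicitly grant --- the structural reduction of a $K_p$-free graph with sublinear independence number to a join of Bollob\'as--Erd\H os blocks with (classes)$+$(blocks)$\le p-1$ --- is exactly where the proof actually lives, and you do not supply it. The paper does not get this from a stability theorem. It proves a \emph{Weighted Zykov Theorem} (Theorem~\ref{w-zykov}): among $p$-skeleton-free weighted graphs on $r$ vertices maximizing the weighted $K_q$-count, there is a \emph{profile graph} (complete $t$-partite with a balanced $s_i$-partite weight-$1/2$ graph inside part $i$). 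The proof is a two-stage symmetrization: Zykov-style vertex symmetrization to make the graph \emph{cellular}, followed by a second symmetrization killing $(\tfrac12,\tfrac12,1)$-triangles via the convexity identity $\N_q(R_x)+\N_q(R_y)-2\N_q(R)=\tfrac12\sum_S(\pi_S(y)-\pi_S(x))^2 w(S)\ge 0$. Nothing in your sketch produces this structure.

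Even granting the profile-graph reduction, part~(b) requires a further, nontrivial argument that you only gesture at (``using more than $q$ classes is suboptimal\ldots''). With $s+t=p-1=q+3$ and $s\ge q$, one still has three candidate profiles: $t=1,s=q+2$; $t=2,s=q+1$; and $t=3,s=q$. The paper kills the first via Claim~\ref{cell-lemma1} ($s_i\le q$) and the second via Claim~\ref{cell-lemma2} (the quantitative inequality $2^{k-2}(\tfrac{k}{k-1})^{k-1}-k\le\tfrac{q-k+1}{s-q}$ forces $k\le 2$ for $s=q+1$ and $q\ge 6$), with $q=3,4,5$ handled by appeal to Theorems~\ref{triangle-thm}, \ref{k4-thm}, \ref{k5-thm}. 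Your proposal contains neither of these two lemmas nor a replacement for them; without them the upper bound for $p=q+4$ (and, more mildly, the two-case comparison for $p=q+3$) is unestablished. So the proposal correctly identifies the answer and the overall shape of the argument, but it is incomplete precisely at the steps the paper's Section~\ref{section-zykov} and the cell lemmas are designed to supply.
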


Observe Theorems~\ref{edge-thm}, \ref{triangle-thm}, \ref{k4-thm}, and \ref{k5-thm} suggest a ``periodic behavior'' based on the parity of $p$ for $\mathfrak{R}_q(p)$ when $p$ is large enough compared to $q$.
In~\cite{balogh2017problems}, the authors conjectured that this behavior should occur for all $p \geq 2q+1$. In Section~\ref{section-counter} we show that this conjecture does not hold in general, but we can prove that it holds for $p \geq 5q$.

\begin{thm}\label{general-q}
        Let $q\geq 5$ and $p\geq 5q$.
        If $p=2h \geq 5q$ is even, then
    \[
    \mathfrak{R}_q(2h) = \max_{0 \leq x \leq 1}
       \frac{1}{2}\left(\frac{x}{2}\right)^2\binom{h-2}{3}\left(\frac{1-x}{h-2}\right)^{q-2}+x\binom{h-2}{4}\left(\frac{1-x}{h-2}\right)^{q-1} + \binom{h-2}{5}\left(\frac{1-x}{h-2}\right)^q.
    \]
If $p = 2h+1\geq 5q$ is odd, then
    \[
    \mathfrak{R}_q(2h+1) = \left(\frac{1}{h}\right)^q\binom{h}{5}.       
    \]    
\end{thm}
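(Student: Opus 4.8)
\quad
\emph{Lower bound.} The extremal configurations are complete joins of ``atoms'' carrying clique budgets: a \emph{triangle-free block} --- a triangle-free graph on $\Theta(n)$ vertices with independence number $o(n)$ (these exist by standard Ramsey-type bounds), of clique number $2$ --- and a \emph{Bollob\'as-Erd\H os block} --- a copy of the graph of \cite{BE-const}, which is $K_4$-free on $\Theta(n)$ vertices with independence number $o(n)$ and clique number $3$. Taking disjoint blocks of prescribed relative sizes summing to $1$ and joining everything completely yields a graph of independence number $o(n)$ that is $K_p$-free as soon as the budgets sum to at most $p-1$. For odd $p=2h+1$, use $h$ equal triangle-free blocks: each spans only $o(n^2)$ edges, so a $K_q$ meeting a block in two vertices is negligible and the number of copies is $(1+o(1))\binom{h}{q}(n/h)^q$. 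For even $p=2h$ the budget $2h-1$ is odd, forcing at least one Bollob\'as-Erd\H os block; use $h-2$ equal triangle-free blocks and one Bollob\'as-Erd\H os block of relative size $x$. A $K_q$ meets each triangle-free block in at most one vertex and, since $\mathfrak{R}_3(4)=0$, meets the Bollob\'as-Erd\H os block in at most two vertices; splitting by the number $j\in\{0,1,2\}$ of its vertices there and optimizing over $x$ gives exactly the stated maximum (the $j=2$ term carries the factor $\mathfrak{R}_2(4)\,x^2=\tfrac12(\tfrac x2)^2$ by Theorem~\ref{edge-thm}).

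\emph{Upper bound: reduction to an optimization.} Here I would run the regularity-method machinery of \cite{EHSSz,balogh2017problems} that underlies Theorems~\ref{edge-thm}--\ref{k5-thm}. Apply Szemer\'edi's regularity lemma to a $K_p$-free graph $G$ with $\alpha(G)=o(n)$, pass to the reduced graph $R$ on the clusters, and record the density of each regular pair. The \emph{doubling} principle that is the technical core of Ramsey-Tur\'an proofs --- each cluster, having independence number $o(n)$, contains an edge, which can be propagated along cliques of $R$ --- shows, after merging clusters into blocks, that the $K_p$-freeness of $G$ is equivalent to a combinatorial constraint on the weighted, density-labelled block structure (namely that the corresponding blow-up is $K_p$-free), in which the only relevant pair-densities are $1$ and $\tfrac12$, a density-$\tfrac12$ pair being precisely one that cannot be ``doubled'' into a $K_4$, just as the Bollob\'as-Erd\H os graph is $K_4$-free. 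Combined with the counting lemma --- so that, up to $o(n^q)$, the number of $K_q$'s in $G$ is the weighted count of $K_q$-homomorphisms into this block structure --- this reduces the theorem to a finite optimization over admissible structures. For a complete join of blocks with clique budgets $t_i\ge2$ and $\sum_i t_i\le p-1$ this optimization takes the form
\[
\max\ \sum_{q_1+\cdots+q_r=q}\ \prod_{i=1}^{r}\mathfrak{R}_{q_i}(t_i+1)\,w_i^{q_i},
\]
with the conventions $\mathfrak{R}_0=\mathfrak{R}_1=1$ and $\mathfrak{R}_{q_i}(t_i+1)=0$ for $q_i\ge t_i$; in general one must also allow blocks in which several parts are joined at density $\tfrac12$ (the structures realizing the small cases of Theorems~\ref{k4-thm} and \ref{k5-thm}), which contribute at yet smaller rates.

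\emph{The main step.} It remains to prove that for $p\ge5q$ the optimum is attained by the lower-bound configurations: all budgets $2$ for odd $p$, and one budget-$3$ block with the rest budget $2$ for even $p$. This is the heart of the argument and where the hypothesis $p\ge5q$ is essential. Parity fixes the parity of the number of Bollob\'as-Erd\H os-type blocks, and raising it costs three triangle-free blocks for every two Bollob\'as-Erd\H os blocks gained; the only new contributions this unlocks place two $K_q$-vertices in an extra such block, carrying the small factor $\mathfrak{R}_2(4)=\tfrac18$ (placing three is useless since $\mathfrak{R}_3(4)=0$), and the multipartite density-$\tfrac12$ alternatives are even weaker. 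When $p\ge5q$ there are at least $\tfrac{5q}{2}-2$ triangle-free blocks to work with, comfortably more than $q$, and the plan is to show that in this regime the trade strictly decreases the optimum: write each admissible configuration's count as a polynomial in its weights, symmetrize over equal-budget blocks, and compare it term-by-term against the claimed configuration, the comparison going through exactly when $p\ge5q$. The remaining task --- that the winning configuration's maximum equals the $\max_x$ expression (respectively $\binom{h}{q}(1/h)^q$) in the statement --- is the same one-variable calculation as in the proof of Theorem~\ref{k5-thm}. I expect this coefficient comparison, and pinning down $5q$ as the threshold that makes it work, to be the only substantive obstacle; the regularity, doubling, and counting-lemma steps, while lengthy, are standard in the area.
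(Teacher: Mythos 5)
Your overall framework is right---regularity lemma, reduction to a weighted block/profile structure, and identification of the key trade (two $K_4$-free-type blocks versus three triangle-free blocks, with parity forcing the number of the former to be $0$ or $1$)---and your lower-bound construction matches the paper's $\mathcal{G}(n;\lceil(p-1)/2\rceil,\lfloor(p-1)/2\rfloor)$. However, you explicitly leave the heart of the matter unresolved (``the plan is to show \ldots'', ``I expect this coefficient comparison \ldots to be the only substantive obstacle''), and this is precisely the part the paper actually has to prove; there is a genuine gap here.

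Specifically, the paper does not do a global ``term-by-term polynomial comparison, after symmetrizing over equal-budget blocks, against the claimed configuration.'' Instead it proceeds locally. After proving a weighted Zykov theorem (Theorem~\ref{w-zykov}) to reduce the reduced graph $R$ to a \emph{profile graph}, and using Claims~\ref{cell-lemma1} and~\ref{cell-lemma2} to guarantee each part has at most two cells once $s\ge 2.4q$, it shows: whenever there are two parts $i,j$ with two cells each, the operation of Lemma~\ref{repartition-two-two} (replacing them by three single-cell parts) cannot decrease $\N_q(R)$. The change restricted to $K_q$'s meeting $i\cup j$ in $3$ or $4$ vertices is $10x^3\N_{q-3}(W)-\tfrac{81}{4}x^4\N_{q-4}(W)$, and the paper controls $\N_{q-4}(W)$ via the double-counting inequality
\[
(q-3)\,\N_{q-3}(W)\ \geq\ \tfrac{3x(s-q)}{2}\,\N_{q-4}(W),
\]
obtained by extending a $(q-4)$-clique in $W$ by one vertex (each part has $\le 2$ cells, so the extension multiplies weight by at least $\tfrac12$, and the $s-q$ unused cells in $W$ each have size $\ge 3x$ by Claim~\ref{relative-cell}). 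Substituting yields a factor $10-\tfrac{27}{2}\cdot\tfrac{q-3}{s-q}$, which is positive exactly when $s\ge 2.4q$, and this is what $p\ge 5q$ buys. None of this---the weighted Zykov reduction, the Cell Lemmas bounding cells per part, Lemma~\ref{repartition-two-two}, and especially the $\N_{q-4}/\N_{q-3}$ double-count---appears in your sketch, and your proposed global symmetrized-polynomial comparison is both different from and much vaguer than this local argument; it is not evident it would go through without the double-counting insight. (Minor aside: the published statement has typos---the binomials should read $\binom{h-2}{q-2},\binom{h-2}{q-1},\binom{h-2}{q}$ and $\binom{h}{q}$ rather than $\binom{h-2}{3},\binom{h-2}{4},\binom{h-2}{5}$ and $\binom{h}{5}$---and your lower-bound discussion correctly produces the $q$-dependent version.)
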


{\bf Organization. }
In Section~\ref{construct} we describe a general construction that will be used to prove the lower bounds in the stated theorems. In Section~\ref{section-counter} we discuss counterexamples to a conjecture stated in \cite{balogh2017problems} on the behavior of $\mathfrak{R}_q(p)$. In Section~\ref{section-zykov} we introduce our main tool: a weighted version of Zykov's theorem. In Section~\ref{section-proofs} we establish the upper bounds in Theorems~\ref{edge-thm},~\ref{triangle-thm},~\ref{k4-thm}, \ref{k5-thm}, \ref{q-plus}, and \ref{general-q}.

\smallskip

{\bf Notation and alphabet. } Our notation is standard. For undefined terms we refer the reader to the monograph~\cite{bo-book}. 
We have made an effort to keep notation consistent across the manuscript sections. For example, $n$ will be the number of vertices of a graph and the parameters $p$ and $q$ denote the order of a ``prohibited'' clique and ``quantified'' clique, respectively. In a multipartite graph, the collection of classes  of pairwise density $1$ is $\mathcal{T}$ with $t=|\mathcal{T}|$; we call these classes \emph{par{\underline t}s}. We use $\mathcal{S}$ with $s=|\mathcal{S}|$ for the collection of classes of pairwise density at least $1/2$; we call these classes \emph{{\underline c}ells}. Generic integers are $k$ and $\ell$ while $u$ and $v$ are generic vertices. Typically, $x$ (and $y$) are the parameter(s) of an optimization---usually the number of vertices in some part or cell. 
Finally, we use the standard  $\varepsilon$ and $M=M(\varepsilon)$ when applying the Regularity Lemma and the associated reduced graph $R$ will have $r$ vertices.

\smallskip

{\bf Note.} While this manuscript was being finalized we learned that Gao, Jiang, Liu and Sankar~\cite{gao2024generalized} independently proved some related results.

\section{Constructions}\label{construct}

First, let us introduce a classical construction that is central in this work:

{\bf Bollob\'as-Erd\H os graph~\cite{BoEr}.} 
Fix $\varepsilon>0$ and let $d$ and even $n$ be sufficiently large integers. Put
 $\mu=\varepsilon/\sqrt d$. Partition the $d $-dimensional 
unit sphere $\mathbb{S}^d$ into $n/2$ domains, $D_1,\dots,D_{n/2}$, of equal measure with diameter (i.e., the maximum distance between any two points) less than $\mu/2$. For every $1\le i\le n/2$, choose two points $u_i, v_i\in D_i$.  Let $U=\{u_1,\dots,u_{n/2}\}$ and $V=\{v_1,\dots,v_{n/2}\}$. Let $\textbf{BE}(U,V)$ be the graph with vertex set $U\cup V$ and edge set as follows. For every $u,u'\in U$ and $v,v'\in V$,
\begin{enumerate}
\item  $uv\in E(\textbf{BE}(U,V))$ if their distance is less than $\sqrt 2-\mu$,
\item  $uu'\in E(\textbf{BE}(U,V))$ if their distance is more than $2-\mu$,
\item  $vv'\in E(\textbf{BE}(U,V))$ if their distance is more than $2-\mu$.
\end{enumerate} 


Crucially, Bollob\'as and Erd\H os~\cite{BoEr} showed that an $n$-vertex $\textbf{BE}(U,V)$ is $K_4$-free, has independence number $o(n)$ and has $\frac{1}{8}n^2+o(n^2)$ edges, providing a lower bound on  $\rt(n,K_4,o(n))$. Also note that the number of edges in both $U$ and $V$ is $o(n^2)$, and each vertex has $\left(\frac{1}{2}-o(1)\right) \frac{n}{2}$ neighbors in the opposite class.

\bigskip

We now extend the the Bollob\'as-Erd\H os graph above to be multipartite. This generalization can be found in \cite{balogh2017problems}; we copy their description (and lower bound on clique counts in such graphs) here, for completeness.

{\bf $s$-Bollob\'as-Erd\H os graph.} Let $D_1,\dots, D_{n/s}$ be a partition of the high-dimensional unit sphere of equal measure as in the Bollob\'as-Erd\H os graph construction. Let $H$ be an $n$-vertex graph with a balanced partition $V_1,\ldots, V_s$, where each $V_i$ consists of one point from each of the $n/s$ domains $D_1,\ldots, D_{n/s}$. For every pair of distinct integers $V_i,V_j$, let $H[V_i\cup V_j]$ be a copy of $\textbf{BE}(V_i,V_j)$. Note that each $H[V_i]$ is triangle-free and each $H[V_i\cup V_j]$ is $K_4$-free. We claim $H$ is $K_{s+2}$-free. Indeed, any copy of $K_{s+2}$ would have four vertices forming a $K_4$ spanned by two parts $V_i, V_j$ of $H$. But this would give a $K_4$  in the Bollob\'as-Erd\H os graph $\textbf{BE}(V_i,V_j)$, a contradiction.

 We count the number of $K_\ell$ copies with one vertex in each $V_1,V_2,\dots, V_\ell$. 
 Fix a vertex $v_1\in V_1$, a uniformly at random chosen $v_2\in V_2$ is adjacent to $v_1$ if $v_2$ is in the cap (almost a hemisphere) centered at $v_1$ with measure $\frac12-o(1)$, which happens with probability $\frac12-o(1)$. Now we fix a clique on vertex set $v_1, v_2,\ldots, v_{i-1}$ with $i\geq 2$ and $v_j\in V_j$. 
 One can prove that the number of vertices in $V_i$ that are in $\bigcap_{j=1}^{i-1}N(v_j)$ is at least $\left(\left(\frac{1}{2}\right)^{i-1}-o(1)\right)\frac{n}{s}$. 
 There are $\binom{s}{\ell}$ ways to select $\ell$ classes from $s$, so the number  of $K_\ell$ in $H$ satisfies
\begin{equation*}
\N_\ell(H)\geq \binom{s}{\ell} \prod_{i=1}^\ell\left(\left(\frac{1}{2}\right)^{i-1}-o(1)\right)\frac{n}{s}=\left(1+o(1)\right)\binom{s}{\ell}\left(\frac{1}{2}\right)^{\binom{\ell}{2}}\left(\frac{n}{s}\right)^\ell.    
\end{equation*}

Note that a $2$-Bollob\'as-Erd\H os graph is simply an ordinary Bollob\'as-Erd\H os graph. For convenience we take the degenerate $1$-Bollob\'as-Erd\H os graph to be one class of a Bollob\'as-Erd\H os graph, i.e., a set of vertices spanning a  triangle-free graph  with sublinear independence number.

We now describe the main construction that we will use to attain lower bounds on $\mathfrak{R}_q(p)$ in Theorems~\ref{edge-thm},~\ref{triangle-thm},~\ref{k4-thm}, \ref{k5-thm}, ~\ref{q-plus}, and \ref{general-q}. 
It is a careful gluing of copies of $\ell$-Bollob\'as-Erd\H os graphs for appropriate values of $\ell$.

\begin{const*} Fix $s \geq t$.
Define a family of graphs $\mathcal{G}(n;s,t)$ as follows. Put $r = s \mod t$.
    Begin with an $n$-vertex complete $t$-partite graph and into each of the $r$ parts embed a $\lceil s/t \rceil$-Bollob\'as-Erd\H os graph and to each of the remaining $t-r$ parts embed a $\lfloor s/t \rfloor$-Bollob\'as-Erd\H os graph. 
\end{const*}

\begin{figure}[H]
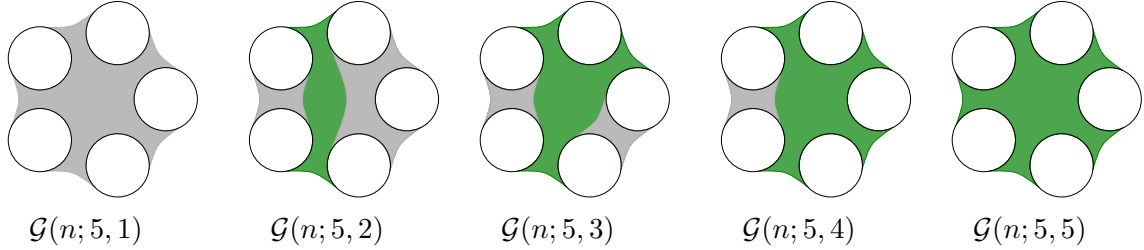

\centering
\begin{subfigure}{0.14\textwidth}
\resizebox{2.7cm}{!}{
\BEfiveone
}
\caption*{$\mathcal{G}(n;5,1)$}
\end{subfigure}
\hspace{1.5em}
\begin{subfigure}{0.14\textwidth}
\resizebox{2.7cm}{!}{
\BEfivetwo
}
\caption*{$\mathcal{G}(n;5,2)$}
\end{subfigure}%
\hspace{1.5em}
\begin{subfigure}{0.14\textwidth}
\resizebox{2.7cm}{!}{
\BEfivethree
}
\caption*{$\mathcal{G}(n;5,3)$}
\end{subfigure}
\hspace{1.5em}
\begin{subfigure}{0.14\textwidth}
\resizebox{2.7cm}{!}{
\BEfivefour
}
\caption*{$\mathcal{G}(n;5,4)$}
\end{subfigure}%
\hspace{1.5em}
\begin{subfigure}{0.14\textwidth}
\resizebox{2.7cm}{!}{
\BEfivefive
}
\caption*{$\mathcal{G}(n;5,5)$}
\end{subfigure}
\caption{Green (dark grey) indicates edge density $1$ and grey indicates edge density $1/2-o(1)$.}
\end{figure}

A graph in $\mathcal{G}(n;s,t)$ has two natural vertex partitions. First, the initial $t$-partite graph has $t$ classes, that are called \emph{parts}. The edge density between pairs of parts is $1$. Within each part, there is a partition into $\lceil s/t \rceil$ or $\lfloor s/t \rfloor$ classes that we call \emph{cells}. Within a part, the edge density between pairs of cells is $1/2-o(1)$. Note that the partition into cells is a refinement of the partition into parts.

The initial $t$-partite graph in $\mathcal{G}(n;s,t)$ is not necessarily balanced. For fixed $s$, we will assume that the $r$ parts with $\lceil s/t\rceil$ cells have cardinality $x$ and the $t-r$ parts with $\lfloor s/t \rfloor$ cells have cardinality $\frac{n-rx}{t-r}$. Moreover, two cells in the same part have the same cardinality. From here it is not difficult (but impractical) to describe an optimization in $x$ that will determine (asymptotically) the maximum possible number  of $K_q$. Generally, we will assume $x$ is optimized to maximize the number  of $K_q$.

Each $\lceil s/t\rceil$- and $\lfloor s/t \rfloor$-Bollob\'as-Erd\H os graph has sublinear independence number. As there are constant $t$ many parts, every graph in $\mathcal{G}(n;s,t)$ has independence number $o(n)$.
Moreover, the size of a largest clique in a graph in $\mathcal{G}(n;s,t)$ is at most $(t-r)\left( \lfloor s/t \rfloor + 1 \right) + r \left( \lceil s/t \rceil +1\right) = s+t$.

Some notable instances of $\mathcal{G}(n;s,t)$ are $\mathcal{G}(n;s,s)$ which is simply a complete $s$-partite graph, $\mathcal{G}(n;s,s-1)$ which is a complete $s$-partite graph with one pair of parts replaced by a Bollob\'as-Erd\H os graph,
and $\mathcal{G}(n;s,1)$ which is an $s$-Bollob\'as-Erd\H os graph. 

The instances of $\mathcal{G}(n;s,t)$ that appear in Table~\ref{big-table} can be checked that they match the corresponding values of $\mathfrak{R}_q(p)$ in Theorems~\ref{edge-thm},~\ref{triangle-thm},~\ref{k4-thm}, and \ref{k5-thm} thus providing lower bounds in those proofs.

 \begin{table}
\centering
\begin{tabular}{|Sr|Sc|Sc|Sc| Sc |}\hline
\diagbox[width=6em]{$p$}{$q$}
 &  2 & 3 & 4 & 5 \\ \hline
4 & \adjustbox{max height=0.34in, max width = 0.8in}\BEtwoone & & & \\ \hline
 5 & \adjustbox{max height=0.34in, max width = 0.8in}\BEtwotwo & \adjustbox{max height=0.34in, max width = 0.8in}\BEthreeone & & \\
  \hline
6 &\adjustbox{max height=0.34in, max width = 0.8in}\BEthreetwo & \adjustbox{max height=0.34in, max width = 0.8in}\BEthreetwo & \adjustbox{max height=0.34in, max width = 0.8in}\BEfourone & \\
  \hline
7&\adjustbox{max height=0.34in, max width = 0.8in}\BEthreethree  & \adjustbox{max height=0.34in, max width = 0.8in}\BEthreethree & \adjustbox{max height=0.34in, max width = 0.8in}\BEfourtwo & \adjustbox{max height=0.34in, max width = 0.8in}\BEfiveone \\ \hline
8 &\adjustbox{max height=0.34in, max width = 0.8in}\BEfourthree  & \adjustbox{max height=0.34in, max width = 0.8in}\BEfourthree & \adjustbox{max height=0.34in, max width = 0.8in}\BEfourthree & \adjustbox{max height=0.34in, max width = 0.8in}\BEfivetwo \\ 
  \hline
9 & \adjustbox{max height=0.34in, max width = 0.8in}\BEfourfour & \adjustbox{max height=0.34in, max width = 0.8in}\BEfourfour & \adjustbox{max height=0.34in, max width = 0.8in}\BEfourfour & \adjustbox{max height=0.34in, max width = 0.8in}\BEfivethree \\ 
  \hline
10 & \adjustbox{max height=0.34in, max width = 0.8in}\BEfivefour & \adjustbox{max height=0.34in, max width = 0.8in}\BEfivefour & \adjustbox{max height=0.34in, max width = 0.8in}\BEfivefour & \adjustbox{max height=0.34in, max width = 0.8in}\BEsixthree \\ 
  \hline
11&\adjustbox{max height=0.34in, max width = 0.8in}\BEfivefive  &\adjustbox{max height=0.34in, max width = 0.8in}\BEfivefive  & \adjustbox{max height=0.34in, max width = 0.8in}\BEfivefive  & \adjustbox{max height=0.34in, max width = 0.8in}\BEsixfour \\
 \hline
$2s \geq 12$& \adjustbox{max height=0.34in, max width = 0.8in}\BEsixfive & \adjustbox{max height=0.34in, max width = 0.8in}\BEsixfive & \adjustbox{max height=0.34in, max width = 0.8in}\BEsixfive & \adjustbox{max height=0.34in, max width = 0.8in}\BEsixfive
\\
  & $\mathcal{G}(n;s,s-1)$ & $\mathcal{G}(n;s,s-1)$ & $\mathcal{G}(n;s,s-1)$ & $\mathcal{G}(n;s,s-1)$ 
\\ \hline
$2s+1 \geq 13$ &  \adjustbox{max height=0.34in, max width = 0.8in}\BEsixsix & \adjustbox{max height=0.34in, max width = 0.8in}\BEsixsix & \adjustbox{max height=0.34in, max width = 0.8in}\BEsixsix & \adjustbox{max height=0.34in, max width = 0.8in}\BEsixsix \\
 & $\mathcal{G}(n;s,s)$ & $\mathcal{G}(n;s,s)$ & $\mathcal{G}(n;s,s)$ & $\mathcal{G}(n;s,s)$ \\ \hline
\end{tabular}
 \caption{Constructions for $\mathfrak{R}_q(p)$. Gray indicates density $1/2$ and green indicates density $1$. }\label{big-table}
 \end{table}

\section{Counterexamples}\label{section-counter}

Assume a graph in $\mathcal{G}(n;s,t)$ (asymptotically) achieves the maximum of $\rt(n,\#K_q,K_p,\alpha n)$ with $p\geq q+2$: it must have
 at least $q$ cells, or else the number of $K_q$ will be $o(n^q)$. In such a graph, the number of cells must always be at least the number of parts. One might expect that a $K_p$-free graph with the most copies of $K_q$ would contain copies of $K_{p-1}$, and so the sum of the number of parts and cells should be $p-1$. Subject to all these constraints, the 
 authors of \cite{balogh2017problems} conjectured that to maximize the number  of $K_q$, we should take the number of parts $t$ as large as possible. We restate their conjecture in our notation.

\begin{conj}\label{their-conjecture}
    Given integers $p>q\geq 3$, one of the asymptotically maximal graphs for $\rt(n,\#K_q,K_p,\alpha n)$ lies in $\mathcal{G}(n;q,p-q-1)$ when $p\leq 2q-1$ and lies in $\mathcal{G}(n;\lceil (p-1)/2\rceil,\lfloor (p-1)/2\rfloor)$ when $p\geq 2q$.
\end{conj}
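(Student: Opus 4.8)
The plan is to attack the statement the way one proves any Ramsey--Tur\'an-type equality: a lower bound supplied by the family $\mathcal{G}(n;s,t)$ of Section~\ref{construct}, a matching upper bound supplied by the regularity method together with a weighted clique-counting inequality, and in between a reduction of the whole problem to a finite optimization whose maximizer must be pinned down.

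For the lower bound I would instantiate the Construction with the parameters the conjecture names --- $s=q$, $t=p-q-1$ when $p\le 2q-1$, and $s=\lceil(p-1)/2\rceil$, $t=\lfloor(p-1)/2\rfloor$ when $p\ge 2q$ --- pick the part and cell cardinalities ($x$ and the induced balanced masses) optimally, and read off the count of $K_q$ copies from the $\ell$-Bollob\'as--Erd\H os clique estimate already recorded in Section~\ref{construct}. One checks the clique number is $s+t=p-1$, so the graph is $K_p$-free, and that its independence number is $o(n)$ because it is a bounded union of Bollob\'as--Erd\H os blocks. This half is essentially bookkeeping and is the same mechanism that yields the lower bounds in Theorems~\ref{edge-thm}--\ref{general-q}.

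For the upper bound, take a $K_p$-free $n$-vertex $G$ with $\alpha(G)=o(n)$, apply Szemer\'edi regularity, discard sparse and irregular pairs, and use $\alpha(G)=o(n)$ to see that no cluster is (essentially) edgeless and that pairs of density below $\tfrac12$ may be thinned away; what remains is a weighted reduced graph $R$ whose clusters split into a collection $\mathcal{T}$ of \emph{parts} (pairwise density $1$) refined by a collection $\mathcal{S}$ of \emph{cells} (pairwise density $\ge\tfrac12$), where $K_p$-freeness caps the clique number across parts and the Bollob\'as--Erd\H os phenomenon forbids a $K_4$ across two cells of one part and a triangle inside a single cell. The weighted Zykov-type bound of Section~\ref{section-zykov} then dominates the number of $K_q$ copies of $G$ by the corresponding weighted clique count of $R$, which is in turn at most the supremum of an explicit finite-dimensional optimization over all admissible values of $(|\mathcal{T}|,|\mathcal{S}|)$ and all admissible mass distributions on parts and cells.

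The crux --- and where I expect the real difficulty to lie --- is the final step: proving that this finite optimization is maximized exactly by the configuration with the largest possible number of parts. The trade-off is genuine. Increasing $t$ makes the ambient multipartite skeleton more balanced, which helps by convexity, but it also forces cells to be split more finely, and each $\ell$-cell clique pays the steeply decaying internal factor $(1/2)^{\binom{\ell}{2}}$ that for $q\ge 4$ quickly dominates the comparison. So "more parts wins" is far from automatic, and I would begin by testing monotonicity in $t$ on the smallest open cases; I expect that for suitable ranges of $p$ relative to $q$ the optimum sits at an intermediate $t$, which is precisely a counterexample to Conjecture~\ref{their-conjecture} --- consistent with the stated plan of Section~\ref{section-counter} --- while for $p$ large compared to $q$ (in fact $p\ge 5q$, cf.\ Theorem~\ref{general-q}) the monotonicity can be salvaged and the conjectured optimizer is correct. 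Thus the honest outcome of this proof attempt is not a proof of the conjecture but a sharp localization of where it is true and where it breaks.
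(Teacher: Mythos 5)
You have correctly read the situation: Conjecture~\ref{their-conjecture} is \emph{false}, and the paper's treatment of it is a disproof (Proposition 3.2) together with the positive result Theorem~\ref{general-q} ($p\ge 5q$). Your conclusion ``the honest outcome is not a proof but a sharp localization of where it is true and where it breaks'' is exactly the paper's stance, and your suggested machinery (construction family $\mathcal{G}(n;s,t)$ for lower bounds, regularity plus weighted Zykov for upper bounds, then a finite optimization over profiles) is the machinery the paper actually builds and deploys in Sections~\ref{section-zykov}--\ref{section-proofs}. So as a roadmap your proposal is sound.

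Two substantive remarks. First, your description of the trade-off in the final optimization has the direction reversed. With $s+t=p-1$ fixed, \emph{increasing} $t$ means \emph{fewer} total cells and fewer cells per part, hence a \emph{smaller} internal $(1/2)^{\binom{\ell}{2}}$ penalty --- not a larger one as you claim. The genuine cost of taking $t$ maximal is the loss in the combinatorial factor $\binom{s}{q}$: with fewer cells there are fewer ways to place a $K_q$. The paper's counterexamples (e.g.\ $q=5$, $p=10$: $\mathcal{G}(n;6,3)$ beats the conjectured $\mathcal{G}(n;5,4)$) win precisely by accepting more $(1/2)$ penalty in exchange for more cells, which is the opposite of the mechanism you describe. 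Your verdict that the conjecture fails is right, but your heuristic for \emph{why} would, taken at face value, predict counterexamples with more parts rather than fewer, and would mislead the optimization analysis.

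Second, your proposal stops at ``I would begin by testing monotonicity in $t$ on the smallest open cases.'' The paper actually exhibits an explicit infinite family of counterexamples: for suitable $p,q$ (e.g.\ $q=2\ell+3k$, $p=3\ell+6k+1$ on the $p\le 2q-1$ side, and $p=2q+1$ on the $p\ge 2q$ side), a graph in $\mathcal{G}(n;q+k,\,p-q-k-1)$ (resp.\ $\mathcal{G}(n;\lceil(p-1)/2\rceil+k,\,\lfloor(p-1)/2\rfloor-k)$) has at least $cn^q$ more copies of $K_q$ than any graph in the conjectured family, with $k$ and $c$ arbitrary. The computation reduces to checking $\binom{k+q}{q}\bigl(\tfrac{q}{q+k}\bigr)^q 2^{-2k}\ge 1+c$ for $q$ large, which is elementary but needs to be done; a genuine disproof requires this concrete comparison, not just a prediction that monotonicity should fail.
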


Conjecture~\ref{their-conjecture} does not hold in general, for both $p\leq 2q-1$ and $p\geq 2q$. Already, when maximizing the number  of $K_5$, we have values of $p$ for which the  conjecture does not hold (namely, $p=10$ and $p=11$).
 For example, when $q=5$ and $p=10$, Conjecture~\ref{their-conjecture} expects $\mathcal{G}(n;5,4)$ to be optimal, but $\mathcal{G}(n;6,3)$ has more copies of $K_5$. Below, we present a more general statement, showing that the difference between the number of cells in the conjectured construction and one that contains more copies of $K_q$ can be arbitrarily large. Moreover, we show that the difference in number  of $K_q$ between the conjectured optimal and the presented construction, after normalization by $n^q$, can be arbitrary large as well. In other words, for infinitely many pairs of $p$ and $q$, we disprove the conjecture in a strong sense. However, Theorems~\ref{k4-thm} and~\ref{k5-thm} support the conjecture for $p$ large enough compared to $q$, and Theorem~\ref{general-q} shows that the conjecture does indeed hold for $p \geq 5q$. We do not believe that the constant $5$ is optimal, but it remains unclear at what point the conjecture holds or what the behavior of $\rt(n,\#K_q,K_p,\alpha n)$ should be when $p$ is small compared to $q$.

\begin{prop}
    Let $k$ be a positive integer and let $c$ be a positive real number. 
    
    \begin{enumerate}

        \item For each $q$ large enough, there exists $p\leq 2q-1$ such that an optimal graph in the conjectured family $\mathcal{G}(n;q,p-q-1)$ has $cn^q$ fewer copies of $K_q$ than some graph in $\mathcal{G}(n;q+k,p-q-k-1)$.

                \item For each $q$ large enough, there exists $p \geq 2q$ such that an optimal graph in the conjectured family $\mathcal{G}(n;\lceil\frac{p-1}{2}\rceil,\lfloor\frac{p-1}{2}\rfloor)$ has $cn^q$ fewer copies of $K_q$ than  some graph in $\mathcal{G}(n;\lceil\frac{p-1}{2}\rceil+k ,\lfloor\frac{p-1}{2}\rfloor-k)$.
    \end{enumerate}
\end{prop}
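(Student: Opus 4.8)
The plan is to reduce the claimed strict inequality between the two families to an explicit computation of the $K_q$-count in a general member of $\mathcal{G}(n;s,t)$, and then choose parameters making the gap as large as we like. Recall that in a graph in $\mathcal{G}(n;s,t)$ the cells are split among $t$ parts, with pairwise density $1$ between parts and density $1/2-o(1)$ between cells of a common part, and that the number of $K_q$ copies is (asymptotically) obtained by choosing how the $q$ ``slots'' are distributed among the parts: if a part receives $a\ge 2$ of the chosen vertices, those $a$ vertices lie in $a$ distinct cells of that part and contribute a factor $(1/2)^{\binom{a}{2}}$ (coming from the $\binom{a}{2}$ cell-pairs at density $1/2$), while a part receiving $a\le 1$ vertices contributes no such factor; across parts the density is $1$. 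Thus, writing $m_i$ for the fraction of $n$ in cell $i$ (optimized subject to the part/cell structure), the normalized count $\mathfrak{R}_q$ for $\mathcal{G}(n;s,t)$ is a maximum over ways of assigning a multiset of cell-indices of size $q$ (at most one vertex per cell), weighting each assignment by $\prod m_i$ times $(1/2)$ raised to the number of intra-part cell-pairs used. The key qualitative point: moving a cell from its own part into a shared part only \emph{adds} factors of $1/2$, so fewer parts is never obviously better — what helps is having \emph{more cells} available without creating new intra-part penalties, i.e.\ spreading the $q$ chosen vertices across many singleton-in-their-part cells.

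With this in hand, for part (1) I would fix $k$ and take $p = 2q-1-2(k-1) = 2q-2k+1$ for $q$ large (so $p\le 2q-1$); then $p-q-1 = q-2k$ and $p-q-k-1 = q-3k$. The conjectured family $\mathcal{G}(n;q,q-2k)$ has $q$ cells distributed among $q-2k$ parts, forcing some parts to contain $\ge 2$ cells and hence forcing every way of placing the $q$ vertices to incur at least a fixed positive number of $1/2$-penalties (roughly, at least $2k$ of the cells share parts). The competing family $\mathcal{G}(n;q+k,q-3k)$ has $q+k$ cells among $q-3k$ parts; although it has more parts-with-multiple-cells in total, it has $k$ \emph{extra} cells, and one can place the $q$ chosen vertices so as to use the extra cells to avoid some penalties — concretely, place one vertex in each of $q$ cells chosen to minimize the number of shared-part pairs, which is strictly smaller than in the conjectured family. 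After optimizing the cell sizes $m_i$ (a routine Lagrange/AM–GM computation, since each factor is a monomial in the $m_i$ on the simplex), the ratio of the two normalized counts is a fixed constant $>1$ depending only on how many $1/2$'s are saved; multiplying through by $n^q$ makes the additive gap $cn^q$ for any prescribed $c$ once $q$ (hence $n$) is large, because the leading coefficient itself can be driven up by increasing $q$. Part (2) is entirely analogous with $p = 2(q+k-1)$ even (or the odd sibling), comparing $\mathcal{G}(n;q+k-1,q+k-1)$ — a complete multipartite graph, i.e.\ the conjectured ``maximize $t$'' choice $\mathcal{G}(n;\lceil(p-1)/2\rceil,\lfloor(p-1)/2\rfloor)$ — against $\mathcal{G}(n;q+2k-1,q-1)$, where again the extra cells let us place the $q$ vertices with fewer or no intra-part $1/2$-factors, beating the balanced multipartite count.

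The main obstacle I anticipate is \textbf{not} the existence of a gap but making it simultaneously (a) strict for the \emph{optimized} cell sizes in both families and (b) quantitatively unbounded after normalization. For (a) one must actually solve the two optimizations: each is of the form $\max \sum_{\text{assignments}} 2^{-(\cdot)}\prod m_i$ over a product of simplices (one simplex per part, of dimension = number of cells in that part $-1$), together with the freedom in the part sizes $x$; these are smooth concave-after-log problems, so the optimum is characterized by equalizing appropriate partial derivatives, but one has to check that the optimal assignment in the larger family genuinely exploits the extra cells (rather than degenerating). For (b), the cleanest route is to let $q\to\infty$ with $k$ fixed: then the conjectured family's best count and the competing family's best count are both of order $n^q$ times explicit functions of $q$, their \emph{ratio} tends to a constant $>1$ (the saved powers of $2$), and — crucially — the competing count's normalized value times $n^q$ grows without bound in the sense that for any fixed $c$ one can choose $q$ large enough that the additive difference exceeds $cn^q$; this follows because the difference of the two normalized densities is bounded below by a positive constant times the smaller density, which itself is bounded below. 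I would present the computation by first writing the general formula for $\mathfrak{R}_q$ on $\mathcal{G}(n;s,t)$ as a max of monomials, then instantiate it on the four specific families, optimize, and read off the inequality; the parity bookkeeping (ceiling/floor in $p$) is the only genuinely fiddly part and is handled exactly as in the earlier constructions section.
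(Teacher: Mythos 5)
The broad strategy you describe (pick a suitable $p$, compare densities in the two families, let $q$ grow) matches the paper's, and your parameter choice $p=2q-2k+1$ in part~(1) would in fact lead to a valid comparison. But the mechanism you identify as the source of the gap is backwards, and this is not a cosmetic slip --- it is the heart of the argument.

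You claim that the competing family $\mathcal{G}(n;q+k,p-q-k-1)$ wins because the $k$ extra cells let you place the $q$ chosen vertices so as to incur \emph{fewer} intra-part $1/2$-penalties than in the conjectured $\mathcal{G}(n;q,p-q-1)$. This is false. With your $p=2q-2k+1$: the conjectured family has $q$ cells in $q-2k$ parts, so exactly $2k$ doubled parts, hence every $K_q$ (using all $q$ cells) incurs exactly $2k$ factors of $1/2$. The competing family has $q+k$ cells in $q-3k$ parts, so $4k$ doubled parts; even after omitting the $k$ ``unused'' cells optimally, at least $3k>2k$ complete pairs remain. So the competing family pays \emph{more} $1/2$-penalties, not fewer. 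The same sign error occurs in your part~(2): the conjectured $\mathcal{G}(n;q+k-1,q+k-1)$ is complete multipartite and pays \emph{zero} $1/2$-penalties, while your competing $\mathcal{G}(n;q+2k-1,q-1)$ must pay at least $k+1$. What actually makes the competing family win in both parts --- and what the paper's proof actually uses, with the different choice $q=2\ell+3k$, $p=3\ell+6k+1$ --- is the binomial factor $\binom{q+k}{q}$ coming from the choice of which $q$ cells to use. The ratio of counts is of the form $\binom{q+k}{q}\bigl(\tfrac{q}{q+k}\bigr)^q 2^{-2k}$: the middle factor is bounded below (tends to $e^{-k}$), the last is a fixed loss, and $\binom{q+k}{q}\to\infty$ with $q$ carries the argument. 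Your proposal never isolates this binomial factor as the driving term and instead attributes the win to penalties saved that are not saved.

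A second gap: you assert the ratio of the two densities tends to a fixed constant $>1$ and that the smaller density ``is bounded below'', so the additive gap $cn^q$ follows. Both claims are wrong. The ratio tends to infinity (via $\binom{q+k}{q}$), not to a constant, and the normalized densities are of order $q^{-q}$ up to explicit factors, which tends to $0$ super-exponentially --- they are emphatically not bounded below as $q\to\infty$. If you want to produce an additive gap of the claimed form you must argue as the paper does, rearranging to the condition $1+c\le\binom{q+k}{q}\bigl(\tfrac{q}{q+k}\bigr)^q 2^{-2k}$ and observing the right side diverges; the ``constant ratio times bounded-below density'' route you sketch cannot close.
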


\begin{proof}
    For simplicity, we only show the proof of part 1 explicitly for odd $q$. Fix $k, c$ and let $q=2\ell+3k$ and $p = 3\ell+6k+1$ for $\ell \geq 2$ to be determined. Note that $p \leq 2q-1$.
    We shall normalize $n$ to $1$ for convenience.
    
    Consider an optimal graph (see Figure~\ref{cx-figure}) in the conjectured family $\mathcal{G}(n;q,p-q-1)$. It is easy to see that all cells should be the same size in such a graph. So it contains $\left(\frac1q\right)^q\left(\frac12\right)^\ell$ copies of $K_q$. On the other hand, a graph in $\mathcal{G}(n;q+k,p-q-k-1)$ with all cells of equal size has at least $\binom{k+q}{q}\left(\frac{1}{q+k}\right)^q\left(\frac12\right)^{2k+\ell}$ copies of $K_q$ (again see Figure~\ref{cx-figure}).

Combining counts and rearranging, we can see that the latter construction will contain  $cn^q$ more copies of $K_q$ if $1+c\leq \binom{k+q}{q}(\frac{q}{q+k})^q\left(\frac12\right)^{2k}$. 
The term $(\frac{q}{q+k})^q\left(\frac12\right)^{2k}$ is bounded below by a constant in $k$ and $\binom{k+q}{q}$ tends to infinity as $q$ increases.
Therefore, we eventually have $q=2\ell+3k$ such that $\binom{k+q}{q}(\frac{q}{q+k})^q\left(\frac12\right)^{2k}\geq 1+c$, as desired.

To prove part 2, fix $k,c$ and let $p=2q+1$ and $q$ to be determined. Again normalize $n$ to $1$. 
Consider an optimal graph in the conjectured family $\mathcal{G}(n;\lceil\frac{p-1}{2}\rceil,\lfloor\frac{p-1}{2}\rfloor)$. The symmetry of the construction yields that cell sizes should again be equal. So it contains $\left(\frac1q\right)^q$ copies of $K_q$.
On the other hand, a graph in $\mathcal{G}(n;\lceil\frac{p-1}{2}\rceil+k ,\lfloor\frac{p-1}{2}\rfloor-k)$ with all cells of equal size has at least $\binom{q+k}{q}\left(\frac{1}{q+k}\right)^k\left(\frac12\right)^{2k}$ copies of $K_q$. From here, the computation proceeds almost identically as in the first case.
\end{proof}

\begin{figure}
\begin{center}
\begin{tikzpicture}
    \draw[fill=ao(english)!70, rounded corners] (0, 0) rectangle (4, 8) {};

    \draw[fill=white]  (2, 1) ellipse [x radius=1.5, y radius=0.4];
    \draw[fill=white]  (2, 2) ellipse [x radius=1.5, y radius=0.4];
    \draw[fill=white]  (2, 4) ellipse [x radius=1.5, y radius=0.4];

    \draw (1.4,1) ellipse [x radius = 0.3, y radius = 0.3];
    \draw (2.6,1) ellipse [x radius = 0.3, y radius = 0.3];
    \draw[dashed] (1.8, 1) -- (2.2, 1);
    \draw[dashed] (1.8, 1.1) -- (2.2, 1.1);
    \draw[dashed] (1.7, 1.2) -- (2.3, 1.2);
    \draw[dashed] (1.8, 0.9) -- (2.2, 0.9);
    \draw[dashed] (1.7, 0.8) -- (2.3, 0.8);

    \draw (1.4,2) ellipse [x radius = 0.3, y radius = 0.3];
    \draw (2.6,2) ellipse [x radius = 0.3, y radius = 0.3];
    \draw[dashed] (1.8, 2) -- (2.2, 2);
    \draw[dashed] (1.8, 2.1) -- (2.2, 2.1);
    \draw[dashed] (1.7, 2.2) -- (2.3, 2.2);
    \draw[dashed] (1.8, 1.9) -- (2.2, 1.9);
    \draw[dashed] (1.7, 1.8) -- (2.3, 1.8);

    \draw (1.4,4) ellipse [x radius = 0.3, y radius = 0.3];
    \draw (2.6,4) ellipse [x radius = 0.3, y radius = 0.3];
    \draw[dashed] (1.8, 4) -- (2.2, 4);
    \draw[dashed] (1.8, 4.1) -- (2.2, 4.1);
    \draw[dashed] (1.7, 4.2) -- (2.3, 4.2);
    \draw[dashed] (1.8, 3.9) -- (2.2, 3.9);
    \draw[dashed] (1.7, 3.8) -- (2.3, 3.8);

    \draw[fill=white] (2,2.7) ellipse [x radius = 0.1, y radius = 0.1];
    \draw[fill=white] (2,3.0) ellipse [x radius = 0.1, y radius = 0.1];
    \draw[fill=white] (2,3.3) ellipse [x radius = 0.1, y radius = 0.1];

    \draw[fill=white] (2,5.2) ellipse [x radius = 0.4, y radius = 0.4];
    
    \draw[fill=white] (2,6.2) ellipse [x radius = 0.4, y radius = 0.4];
    \draw[fill=white] (2,7.2) ellipse [x radius = 0.4, y radius = 0.4];

    \draw[fill=ao(english)!70, rounded corners] (7, 0) rectangle (11, 8) {};
    
    \draw[fill=white]  (9, 1) ellipse [x radius=1.5, y radius=0.4];
    \draw[fill=white]  (9, 2) ellipse [x radius=1.5, y radius=0.4];
    \draw[fill=white]  (9, 4) ellipse [x radius=1.5, y radius=0.4];

    \draw[fill=white]  (9, 5.2) ellipse [x radius=1.5, y radius=0.4];
    \draw[fill=white]  (9, 6.2) ellipse [x radius=1.5, y radius=0.4];

    \draw[fill=white] (9,2.7) ellipse [x radius = 0.1, y radius = 0.1];
    \draw[fill=white] (9,3.0) ellipse [x radius = 0.1, y radius = 0.1];
    \draw[fill=white] (9,3.3) ellipse [x radius = 0.1, y radius = 0.1];

    \draw (8.4,1) ellipse [x radius = 0.3, y radius = 0.3];
    \draw (9.6,1) ellipse [x radius = 0.3, y radius = 0.3];
    \draw[dashed] (8.8, 1) -- (9.2, 1);
    \draw[dashed] (8.8, 1.1) -- (9.2, 1.1);
    \draw[dashed] (8.7, 1.2) -- (9.3, 1.2);
    \draw[dashed] (8.8, 0.9) -- (9.2, 0.9);
    \draw[dashed] (8.7, 0.8) -- (9.3, 0.8);

    \draw (8.4,2) ellipse [x radius = 0.3, y radius = 0.3];
    \draw (9.6,2) ellipse [x radius = 0.3, y radius = 0.3];
    \draw[dashed] (8.8, 2) -- (9.2, 2);
    \draw[dashed] (8.8, 2.1) -- (9.2, 2.1);
    \draw[dashed] (8.7, 2.2) -- (9.3, 2.2);
    \draw[dashed] (8.8, 1.9) -- (9.2, 1.9);
    \draw[dashed] (8.7, 1.8) -- (9.3, 1.8);

    \draw (8.4,4) ellipse [x radius = 0.3, y radius = 0.3];
    \draw (9.6,4) ellipse [x radius = 0.3, y radius = 0.3];
    \draw[dashed] (8.8, 4) -- (9.2, 4);
    \draw[dashed] (8.8, 4.1) -- (9.2, 4.1);
    \draw[dashed] (8.7, 4.2) -- (9.3, 4.2);
    \draw[dashed] (8.8, 3.9) -- (9.2, 3.9);
    \draw[dashed] (8.7, 3.8) -- (9.3, 3.8);

    \draw (8.4,5.2) ellipse [x radius = 0.3, y radius = 0.3];
    \draw (9.6,5.2) ellipse [x radius = 0.3, y radius = 0.3];
    \draw[dashed] (8.8, 5.2) -- (9.2, 5.2);
    \draw[dashed] (8.8, 5.3) -- (9.2, 5.3);
    \draw[dashed] (8.7, 5.4) -- (9.3, 5.4);
    \draw[dashed] (8.8, 5.1) -- (9.2, 5.1);
    \draw[dashed] (8.7, 5.0) -- (9.3, 5.0);

    \draw (8.4,6.2) ellipse [x radius = 0.3, y radius = 0.3];
    \draw (9.6,6.2) ellipse [x radius = 0.3, y radius = 0.3];
    \draw[dashed] (8.8, 6.2) -- (9.2, 6.2);
    \draw[dashed] (8.8, 6.3) -- (9.2, 6.3);
    \draw[dashed] (8.7, 6.4) -- (9.3, 6.4);
    \draw[dashed] (8.8, 6.1) -- (9.2, 6.1);
    \draw[dashed] (8.7, 6.0) -- (9.3, 6.0);

    \draw [decorate,decoration={brace,amplitude=10pt},xshift=0pt,yshift=0pt]
    (-0.1,0.8) -- (-0.1,4.2) node [black,midway,xshift=-0.7cm,yshift=0cm] 
    {\footnotesize $\ell$};

    \draw [decorate,decoration={brace,amplitude=10pt},xshift=0pt,yshift=0pt]
    (-0.1,5.0) -- (-0.1,7.4) node [black,midway,xshift=-0.7cm,yshift=0cm] 
    {\footnotesize $3k$};

    \draw [decorate,decoration={brace,amplitude=10pt},xshift=0pt,yshift=0pt]
    (6.9,0.8) -- (6.9,6.4) node [black,midway,xshift=-1cm,yshift=0cm] 
    {\footnotesize $\ell+2k$};
\end{tikzpicture}
\end{center}
\caption{Left, a graph in $\be(n;q,p-q-1)$. Right, a graph in $\be(n;q+k,p-q-k-1)$. For certain values of $p$ and $q$, the right contains more copies of $K_q.$}\label{cx-figure}
\end{figure}
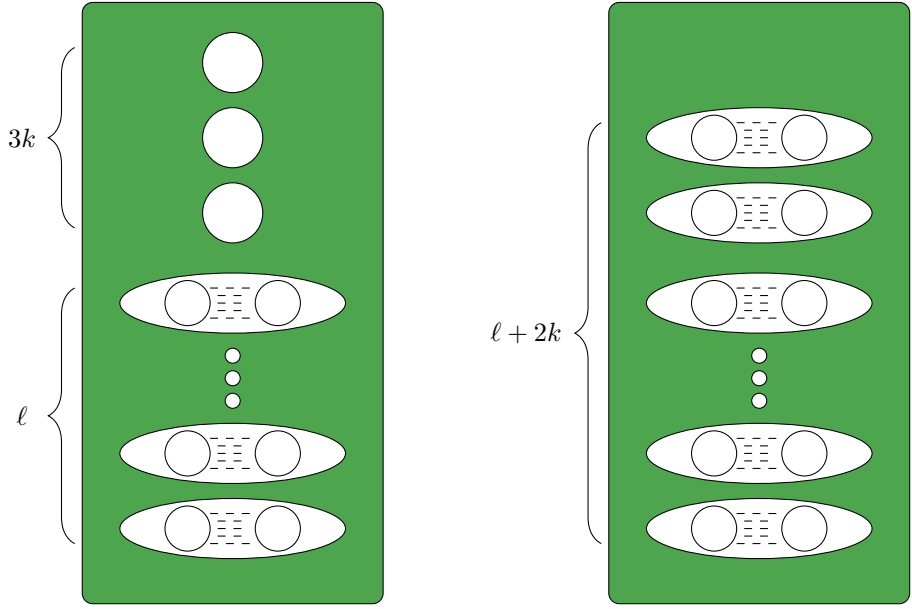

While Conjecture~\ref{their-conjecture}  makes no claims to this effect, we note here that other asymptotically maximal graphs exist for infinitely many pairs $p,q$. We loosely describe an example here (and provide a figure) for the case of $(p,q)=(6,3)$.

Let $G$ be a graph on $n$ vertices constructed as follows: First, split the vertices of $G$ into six equal parts: $V_1,V_2,\ldots,V_6$. Embed a $3$-Bollob\'as-Erd\H os graph on $V_1\cup V_2\cup V_3$, as well as on $V_4\cup V_5\cup V_6$. Add all edges between $V_1\cup V_2\cup V_3$ and $V_4\cup V_5\cup V_6$. Finally, delete all edges between $V_1$ and $V_2$, between $V_4$ and $V_5$, and between $V_3$ and $V_6$. It is reasonably straightforward to check that $G$ is $K_6$-free. Indeed, no clique contains more than three vertices among either of $V_1\cup V_2\cup V_3$ or $V_4\cup V_5\cup V_6$, or more than one vertex in $V_3\cup V_6$. Any clique containing three vertices among $V_1\cup V_2\cup V_3$ necessarily contains a vertex from $V_3$ and the symmetric condition is true for $V_4\cup V_5\cup V_6$ and a vertex in $V_6$. Putting this together, a clique containing five vertices must include three vertices from one of either $V_1\cup V_2\cup V_3$ or $V_4\cup V_5\cup V_6$, and can then contain exactly two from the other. Moreover, it is not hard to check that $G$ has an asymptotically equal number of copies of $K_3$ to a maximal graph in $\mathcal{G}(n;3,2)$.

\begin{figure}[H]
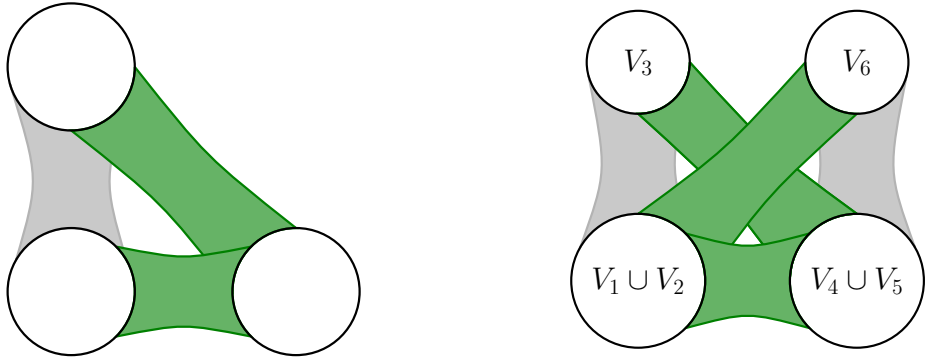

\centering
\begin{subfigure}[h]{0.4\linewidth}
\centering
\resizebox{5cm}{!}{
\conjec
}

\end{subfigure}
\hspace{1.5em}
\begin{subfigure}[h]{0.4\linewidth}
\centering
\resizebox{5cm}{!}{
\conster
}

\end{subfigure}%
\caption{A triangle-maximal graph in $\mathcal{G}(n;3,2)$ versus a non-isomorphic graph with (asymptotically) the same number of triangles.}
\end{figure}

\section{Weighted Zykov's Theorem}\label{section-zykov}

In this section we prove our main tool which will be a weighted version of Zykov's Theorem for maximizing the number of copies of $K_q$ in a $K_p$-free graph.

\begin{thm}[Zykov, \cite{zykov}]\label{zk}
Fix $2\le q< p$ and let $H$ be an $n$-vertex graph containing   no $K_p$. If $H$ has the maximum number of copies of $K_q$, then $H$ is the Tur\'an graph $T(n,p-1)$.
\end{thm}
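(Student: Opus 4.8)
The plan is to prove Zykov's theorem by \emph{symmetrization}, generalizing the standard proof of Tur\'an's theorem (the case $q=2$). Write $k_q(G)$ for the number of copies of $K_q$ in $G$ and $d(v)$ for the degree of $v$. Among all $K_p$-free $n$-vertex graphs with the maximum value of $k_q$, let $G$ be one with the maximum number of edges; the goal is to show $G=T(n,p-1)$. For a vertex $v$, let $c(v)$ be the number of copies of $K_q$ through $v$; equivalently, $c(v)$ is the number of copies of $K_{q-1}$ in $G[N(v)]$. The first ingredient is the symmetrization move: for non-adjacent $u,v$, let $G_{u\to v}$ be obtained from $G$ by deleting $v$ and inserting a \emph{clone of $u$}, i.e.\ a new vertex $v'$ with $N(v')=N(u)$ and $v'\not\sim u$. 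I would first check $G_{u\to v}$ is still $K_p$-free: since $G_{u\to v}-v'=G-v$, a new copy of $K_p$ must use $v'$, say as $\{v'\}\cup S$ with $S$ a $(p-1)$-clique inside $N(u)$, whence $\{u\}\cup S$ is a $K_p$ in $G$, a contradiction. Counting copies of $K_q$ gives $k_q(G_{u\to v})=k_q(G)-c(v)+c(u)$ (copies avoiding $\{v,v'\}$ are unchanged; copies through $v'$ are exactly the $K_{q-1}$'s of $G[N(u)]$, and $N(u)$ is untouched). Hence maximality forces $c(u)=c(v)$ for every non-edge $uv$, and then $G_{u\to v}$ is again a maximizer.

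\emph{Step: $G$ is complete multipartite.} Suppose not; then there are vertices $u,v,w$ with $uv,vw\notin E(G)$ but $uw\in E(G)$. By the previous paragraph $c(u)=c(v)=c(w)$, so all the relevant clones are again maximizers. Comparing edge counts, $e(G_{u\to v})=e(G)-d(v)+d(u)\le e(G)$ and $e(G_{v\to u})=e(G)-d(u)+d(v)\le e(G)$, which forces $d(u)=d(v)$, and symmetrically $d(v)=d(w)$. Now replace \emph{both} $u$ and $w$ by clones of $v$, obtaining $G^{*}$. Then $G^{*}$ is still $K_p$-free (the three clones of $v$ are pairwise non-adjacent, and any $K_p$ in $G^{*}$ uses at most one of them, which as above yields a $K_p$ in $G$), and applying the move twice --- noting that $v$ stays non-adjacent to $w$ after the first replacement, so the equality $c(\cdot)=c(\cdot)$ is available again --- gives $k_q(G^{*})=k_q(G)$. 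But $e(G^{*})=e(G)-\bigl(d(u)+d(w)-1\bigr)+2d(v)=e(G)+1$, contradicting the maximality of $e(G)$. Therefore non-adjacency is transitive in $G$, i.e.\ $G$ is complete multipartite.

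\emph{Step: balancing.} A complete multipartite $K_p$-free graph has $t\le p-1$ parts, with sizes $n_1,\dots,n_t$, and $k_q(G)=\sum_{|I|=q}\prod_{i\in I}n_i$. Since $k_q(G)>0$ (as $k_q(T(n,p-1))>0$) we have $t\ge q$. If $t<p-1$, splitting one part into two nonempty parts of sizes $b,c$ leaves unchanged the copies of $K_q$ avoiding that part, leaves unchanged those using one vertex of it (the factor $b+c=n_t$ is unchanged), and strictly gains $bc\cdot e_{q-2}(n_1,\dots,n_{t-1})>0$ copies using one vertex from each new part; hence $t=p-1$. With $p-1$ parts, if $n_i-n_j\ge 2$, moving one vertex from part $i$ to part $j$ changes $k_q$ by $(n_i-n_j-1)\,e_{q-2}(\text{other parts})>0$; so all parts are as equal as possible and $G=T(n,p-1)$. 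Finally, uniqueness among \emph{all} maximizers (not just the edge-maximal one) follows by reading these symmetrization and balancing operations as strict-improvement statements and analyzing their equality cases in the standard way.

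\emph{Main obstacle.} The crux is the step showing the extremal graph is complete multipartite. Because a clone does not change $k_q$ once $c(u)=c(v)$, one cannot drive a non-multipartite maximizer toward $T(n,p-1)$ by increasing $k_q$ directly; the contradiction must be produced through the secondary objective, and the \emph{simultaneous} double replacement of $u$ and $w$ --- legitimate precisely because $d(u)=d(v)=d(w)$ is forced --- is exactly what manufactures an extra edge. The delicate points are verifying that $G^{*}$ remains $K_p$-free and that the $K_q$-count is preserved through the two successive clone operations (in particular that $v$ and $w$ stay non-adjacent so the move applies the second time); the balancing step and the equality analysis behind uniqueness are routine by comparison.
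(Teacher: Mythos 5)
The paper does not prove Theorem~\ref{zk}; it is cited from \cite{zykov} as a known classical result, and the paper's own work in Section~\ref{section-zykov} is to prove the \emph{weighted} generalization (Theorem~\ref{w-zykov}), where uniqueness is explicitly not claimed. So there is no ``paper proof'' of this exact statement to compare against, but your argument can be compared with the paper's weighted symmetrization. Both use Zykov symmetrization (cloning a vertex onto a non-neighbor). You get from ``$c(u)=c(v)$ on every non-edge'' to complete multipartiteness by maximizing a \emph{secondary objective} (edge count) and then doubling an edge via the simultaneous double-clone; the paper's proof instead uses a sequential scheme (repeatedly pick a vertex of maximum $q$-weight and symmetrize all its non-neighbors onto it), together with an extra step to kill $(\tfrac12,\tfrac12,1)$-triangles because of the weight-$\tfrac12$ edges. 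Your tie-break trick is the cleaner route in the unweighted case; the paper's sequential scheme is what generalizes naturally to weights.

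The body of your proof is correct: the $K_p$-freeness of $G_{u\to v}$ and of $G^*$, the identity $k_q(G_{u\to v})=k_q(G)-c(v)+c(u)$, the deduction $d(u)=d(v)=d(w)$ from edge-maximality, the computation $e(G^*)=e(G)+1$, and the splitting/balancing step (with the needed observation $t\ge q$ so that $e_{q-2}(\cdot)>0$) all check out. The one genuine gap is the final uniqueness sentence. It is not accurate to say the symmetrization steps are ``strict-improvement statements'': for a maximizer the clone move preserves $k_q$ exactly, so strictness lives only in the secondary objective (for the double-clone) and in the balancing step. Your argument as written shows that the \emph{edge-maximal} $k_q$-maximizer is $T(n,p-1)$ and hence that $k_q(T(n,p-1))$ is the optimum value; it does not yet rule out a non-multipartite maximizer with fewer edges, since $d(u)=d(v)$ is only available under the edge-max hypothesis. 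One clean way to finish: since $T(n,p-1)$ is the edge-maximal maximizer, every maximizer $G$ satisfies $e(G)\le e(T(n,p-1))$. If $G$ is a maximizer with a triple $u,v,w$ as above, then (as your double-clone computation shows, using that $G_{v\to u}$ is again a maximizer) one has $c_{G}(uw)=0$, so the edge $uw$ lies in no $K_q$; deleting it preserves both $K_p$-freeness and $k_q$. Iterating terminates in a complete multipartite maximizer, which must be $T(n,p-1)$, yet it has at most $e(G)\le e(T(n,p-1))$ edges with equality only if no edge was deleted — so $G$ was already complete multipartite, and balancing forces $G=T(n,p-1)$. With that patch, the proof is complete.
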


Assign \emph{weights} $1$ and $1/2$ to the edges of a graph $R$. It will often be convenient to think of non-edges as edges of weight $0$. We call $R$ a \emph{weighted graph} and record the weight on an edge $e$ as $w(e)$.

A {\it $p$-skeleton} is a pair of vertex sets $(X,Y)$, such that $X \subseteq Y$ and every edge in $X$ has weight $1$, every edge in $Y$ has weight at least $1/2$, and $|X|+|Y| =p$.
This notion was introduced in \cite{EHSSz} as a {\it weighted-$p$-clique}. We avoid this terminology as it will create confusion with our notation for the weight of a clique.
In \cite{EHSSz} the authors were counting edges only so no such conflict was present.
A weighted graph that contains no $p$-skeleton is \emph{$p$-skeleton-free}.

In order to prove Theorem~\ref{edge-thm}, Erd\H os, Hajnal, S\'os, Szemer\'edi~\cite{EHSSz} proved the following generalization of Tur\'an's theorem that served as their main tool to analyze the structure of the reduced graph after application of the Regularity Lemma.

\begin{thm}[Weighted Tur\'an's Theorem \cite{EHSSz}]\label{w-turan}
Fix $p \geq 3$. If $R$ is an $n$-vertex weighted graph that is $p$-skeleton-free, then 
\[
\sum_{e \in E(R)} w(e) \leq    \begin{cases} 
       (1+o(1))\left(\dfrac{1}{2} \cdot \dfrac{p-3}{p-2} \right) n^2 & p \textrm{ odd,} \\
      (1+o(1))\left(\dfrac{1}{2} \cdot \dfrac{3p-10}{3p-4} \right) n^2 & p \textrm{ even.}
      
   \end{cases}
\]
\end{thm}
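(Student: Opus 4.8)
The plan is to prove the Weighted Turán Theorem by a stability-free, direct optimization argument over the structure of an extremal $p$-skeleton-free weighted graph $R$, following the weighting/Turán-type scheme of Erdős–Hajnal–Sós–Szemerédi but treating the two weight classes separately. First I would fix an $n$-vertex $p$-skeleton-free weighted graph $R$ maximizing $\sum_{e} w(e)$, and let $\mathcal{T}$ be a maximal set of vertices that are pairwise joined by weight-$1$ edges (a maximal ``part'' clique in the weight-$1$ subgraph), with $t = |\mathcal{T}|$; similarly let the weight-$\tfrac12$-or-more subgraph have clique number $s$, witnessed by a set $\mathcal{S}$ of size $s$. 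The $p$-skeleton-free hypothesis forces $|X| + |Y| \le p-1$ for every candidate skeleton $(X,Y)$, and in particular taking $X$ inside a weight-$1$ clique and $Y$ inside a $\ge\tfrac12$-clique containing it yields the fundamental constraint $2t' + (s'-t') = s' + t' \le p-1$ for the relevant local cliques; globally the clean statement is that one cannot have a weight-$1$ clique of size $a$ whose vertices together with $b$ further vertices all lie in a common $\ge\tfrac12$-clique with $2a + b \ge p$. I would first reduce to the case where $R$ has a clean ``layered'' structure: by a symmetrization/compression argument (moving a vertex $u$ to duplicate a vertex $v$ with at least as large a weighted degree contribution, as in Zykov's proof of Theorem~\ref{zk}), one may assume $R$ is a blow-up of a small weighted ``pattern'' graph on at most $p-1$ vertices, where each pattern vertex is blown up to a class and classes are joined by edges of a single weight $0$, $\tfrac12$, or $1$.

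Next I would set up the continuous optimization. Writing $x_i$ for the normalized size of class $i$ (so $\sum x_i = 1$), the total weight is $\sum_{i<j} w_{ij} x_i x_j$ plus lower-order terms, and we must maximize this quadratic form over the simplex subject to the pattern being $p$-skeleton-free. The key combinatorial input, which I would prove first as a lemma, is a bound on the pattern: a $p$-skeleton-free pattern on classes partitioned according to the equivalence ``joined by weight $1$'' into blocks $B_1, \dots, B_m$ (each $B_i$ a clique of weight-$1$ classes, i.e.\ a ``part'') satisfies that the number of classes, call it $r$, and the block count $m$ obey $r + m \le p-1$ whenever every two classes in distinct blocks are joined by an edge of weight at least $\tfrac12$; more generally one gets $\sum_i (|B_i| + 1) \le p - 1 + (\text{deficiency from non-}\tfrac12\text{ edges})$. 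Then the optimization splits by parity: when $p$ is odd, the extremal configuration has all blocks of size $1$ and the weight-$\tfrac12$ graph among the $\lfloor (p-1)/2 \rfloor$ or so classes being complete, giving $\tfrac12 \cdot \tfrac12 \cdot (1 - 1/k)$ with $k = (p-1)/... $ worked out to match $\tfrac12 \cdot \tfrac{p-3}{p-2}$; when $p$ is even, one block has size $2$ (a genuine weight-$1$ pair) and the rest size $1$, and balancing the class sizes — the $x$-optimization alluded to in the Construction section — yields the constant $\tfrac12 \cdot \tfrac{3p-10}{3p-4}$. I would carry out this finite optimization by Lagrange multipliers / convexity (the maximum of a quadratic form over the simplex is attained with the support being a ``clique'' in the pattern and sizes inversely proportional to... the standard Motzkin–Straus-style computation).

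The main obstacle I expect is the reduction step: showing that an extremal $p$-skeleton-free weighted graph can be taken to be a blow-up of a bounded pattern. The subtlety is that weight-$\tfrac12$ edges and weight-$1$ edges interact — a symmetrization move that improves the weight-$1$ structure might create a $p$-skeleton using weight-$\tfrac12$ edges elsewhere, or vice versa — so the compression must be done carefully, moving one vertex at a time and checking that no new skeleton is introduced, possibly processing the weight-$1$ graph first (Zykov on the weight-$1$ part, which is genuinely $K_t$-structured) and then handling the weight-$\tfrac12$ layer on top. An alternative that sidesteps part of this is to not symmetrize at all but instead run a direct Lagrangian/entropy argument on the original $R$: assign to vertex $v$ the value $\deg_w(v) = \sum_{u} w(uv)$, note $2\sum_e w(e) = \sum_v \deg_w(v)$, and bound $\sum_v \deg_w(v)$ by analyzing, for a vertex $v$ of maximum weighted degree, the weighted graph it dominates — but this still ultimately needs the structural constraint $s + t \le p - 1$ extracted from skeleton-freeness, so the combinatorial lemma above is unavoidable and is where the real content lies. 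I would aim to present the blow-up reduction as cleanly as possible and then let the parity-split optimization be essentially a (careful but routine) calculus exercise matching the two cases in the theorem statement.
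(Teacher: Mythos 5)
The paper does not prove this statement: Theorem~\ref{w-turan} is quoted from \cite{EHSSz} as background, and the paper's own contribution is the more general Theorem~\ref{w-zykov} (Weighted Zykov), of which this is the $q=2$ case. So the fair comparison is against that proof, and indeed the paper remarks that both Zykov's theorem and the weighted Tur\'an theorem ``are proved via symmetrization, which will also be our proof technique.'' Your high-level plan (Zykov-style symmetrization to a bounded blow-up, then a continuous optimization) is therefore the right strategy and is in the same spirit as both \cite{EHSSz} and Theorem~\ref{w-zykov}.

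That said, the sketch has concrete gaps. First, you flag the danger that symmetrization could create a $p$-skeleton, but do not resolve it; the resolution is short and you should state it: if $y$ is non-adjacent to $x$ and $y$'s neighborhood is replaced by $x$'s, any $p$-skeleton through $y$ would yield one through $x$ by swapping labels, contradicting skeleton-freeness. Second, symmetrizing non-neighbors only gets you a ``cellular'' graph (a weighted complete multipartite graph); you still have to eliminate $(\tfrac12,\tfrac12,1)$-triangles to reach the profile structure (weight $1$ across parts, weight $\tfrac12$ inside parts) that makes the optimization tractable. Your ``blocks joined by weight $1$'' are not an equivalence relation on the cellular graph — weight-$1$ adjacency is not transitive before the second cleanup — so the block decomposition you invoke is not yet well-defined. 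Third, your description of the odd-case extremizer is wrong: for $p=2h+1$ it is the complete balanced $h$-partite graph with \emph{all edges of weight $1$}, not a weight-$\tfrac12$ complete graph on $\lfloor(p-1)/2\rfloor$ classes. Its total weight is $\tfrac{h-1}{2h}n^2 = \tfrac12\cdot\tfrac{p-3}{p-1}n^2$, which incidentally exposes a typo in the paper's statement (the odd-case denominator should be $p-1$, not $p-2$; the even-case formula is consistent with $\mathfrak{R}_2(2h)=\tfrac{3h-5}{6h-4}$). Because of this mix-up, the ``routine calculus'' you defer to would not in fact land on the stated constant; it is not a Motzkin--Straus computation on a clique, since the objective mixes weight-$1$ and weight-$\tfrac12$ edges and, in the even case, the simplex maximizer lies at an interior point of the two-cell part's size (the paper's Lemma~\ref{p-large} gives the correct univariate optimization; its remark $x=\tfrac{4}{3(s-1)}$ also appears to be off — solving gives $x=\tfrac{4}{3s-2}$).
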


We will need to prove what is effectively a common generalization of these two theorems. Fortunately, both are proved via symmetrization which will also be our proof technique.

The {\it weight} of a subset of vertices $K$ in a weighted graph $R$ is the product of the edge weights on the induced edges on $K$, i.e., $w(K) := \prod_{e \in E(K)} w(e)$.
We enumerate the total weight of all copies of $K_q$ in $R$ by
\[
\N_q(R) : = \sum_{K \in \binom{V(R)}{q}} w(K).
\]
Throughout this paper, when refer to the number of copies of $K_q$ in a weighted graph $R$ we mean the value $\N_q(R)$.

\begin{defn}
Fix positive integers $t, s_1,s_2,\dots, s_t$.
    A \emph{profile graph} $R$ is defined as follows. Begin with a complete $t$-partite graph with all edges of weight $1$ and, for each $1 \leq i \leq t$, embed a balanced $s_i$-partite graph with all edges of weight $1/2$ into part $i$.
    The \emph{profile} of $R$ is the tuple $(s_1,s_2,\dots,s_t)$.
 \end{defn}

\begin{figure}[H]
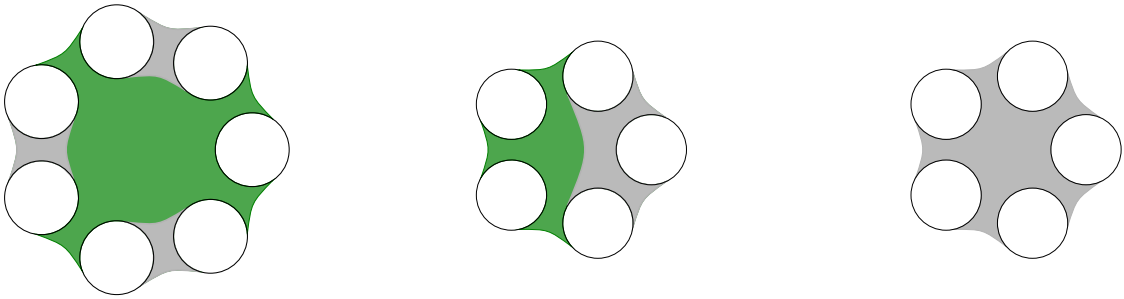

\centering
\begin{subfigure}[h]{0.3\linewidth}
\centering
\resizebox{4cm}{!}{
\BEsevenfour
}
\end{subfigure}
\hfill
\begin{subfigure}[h]{0.3\linewidth}
\centering
\resizebox{3cm}{!}{
\BEfiveprofile
}
\end{subfigure}
\hfill
\begin{subfigure}[h]{0.3\linewidth}
\centering
\resizebox{3cm}{!}{
\BEfiveone
}
\end{subfigure}%
\caption{Three profiles: $(2,2,2,1)$, $(3,1,1)$, and $(5)$.}
\end{figure}

Note that in the definition of a profile graph the initial $t$-partite graph is not necessarily balanced. Moreover, both $t=1$ and $s_i=1$ are allowed which each yield $1$-partite graphs, i.e., a set of independent vertices. Graphs in $\mathcal{G}(n;s,t)$ can be simulated by appropriate  profile graphs.

We now state our main tool. It will be proved by a two-step symmetrization argument. Note that uniqueness is not claimed in the statement.

 \begin{thm}[Weighted Zykov's Theorem]\label{w-zykov}
 Fix $2\le q< p$.
 Among $n$-vertex $p$-skeleton-free weighted graphs $R$ with $\N_q(R)$ maximum, there is a profile graph.
\end{thm}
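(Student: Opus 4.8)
The plan is to fix an $n$-vertex $p$-skeleton-free weighted graph $R$ achieving the maximum value of $\N_q(R)$ and to transform it, via two successive rounds of symmetrization, into a profile graph without decreasing $\N_q$ and without creating a $p$-skeleton. The first round handles the weight-$1$ edges and the second round handles the weight-$1/2$ edges; the key technical point in each round is the standard symmetrization observation that $\N_q(R)$, viewed as a function of the weights incident to a fixed vertex $v$, is \emph{linear} in those weights (since $v$ lies in at most one copy of $K_q$ for each $(q-1)$-subset of $V(R)\setminus\{v\}$), so that among the vertices in a given ``equivalence class'' we may always copy the local structure of the best one to all of them without loss.

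\textbf{Step 1 (symmetrizing the weight-$1$ graph).} First I would look at the spanning subgraph $R_1$ consisting of the weight-$1$ edges. Call two vertices $u,v$ \emph{1-equivalent} if neither $uv$ has weight $1$ nor is there a weight-$1$ edge... more precisely, define $u\sim_1 v$ if $uv\notin E(R_1)$ and $N_{R_1}(u)=N_{R_1}(v)$ (in the usual symmetrization sense, after first deleting the edge $uv$ if present and comparing neighborhoods; one shows that if $u,v$ are non-adjacent in $R_1$ and $\N_q$ is maximized we may take $N_{R_1}(u)=N_{R_1}(v)$, and if $uv\in E(R_1)$ one of the two ``clone'' operations strictly helps or keeps $\N_q$ the same while not introducing a skeleton). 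Iterating this, $\sim_1$ becomes an equivalence relation whose classes are independent sets in $R_1$ and such that $R_1$ is a complete multipartite graph on these classes — this is exactly Zykov/Turán-type symmetrization. One must check two things at each step: (i) the clone operation does not decrease $\N_q(R)$, which follows from linearity of $\N_q$ in the incident weights of the vertex being modified together with the fact that within a class we replace the worse profile by the better; and (ii) the clone operation does not create a $p$-skeleton, which follows because $X$ and $Y$ in a skeleton are determined up to relabeling by neighborhoods and weights, so a skeleton in the new graph pulls back to one in $R$. After Step 1, $R_1$ is complete $t$-partite for some $t$; call its parts $P_1,\dots,P_t$. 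These will be the ``parts'' of the profile graph.

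\textbf{Step 2 (symmetrizing the weight-$\tfrac12$ graph inside each part).} Within a fixed part $P_i$ all edges have weight either $1/2$ or $0$ (weight $1$ is impossible since $P_i$ is independent in $R_1$). Now run the identical symmetrization argument restricted to $P_i$, treating weight-$1/2$ edges as ``the'' edges and weight-$0$ as non-edges: two vertices of $P_i$ that are non-adjacent in this weight-$1/2$ graph can be made to have identical neighborhoods inside $P_i$ (and automatically have identical neighborhoods outside $P_i$, since everything outside is joined to all of $P_i$ with weight $1$), and adjacent ones can be clone-merged via the better of the two clone moves. Again $\N_q$ does not decrease (linearity) and no $p$-skeleton is created. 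The outcome is that the weight-$1/2$ graph inside each $P_i$ is complete multipartite, say with $s_i$ classes; balancing the class sizes within each part is a final convexity step (or can be absorbed by noting that the extremal $x$ in the relevant optimization is attained at a balanced point, but for the qualitative statement "there is a profile graph" we only need that the within-part structure is complete multipartite — balancing is optional, though we may as well invoke it). The resulting graph is, by definition, a profile graph with profile $(s_1,\dots,s_t)$, it is $p$-skeleton-free, and it has $\N_q$ at least that of the original $R$, hence equal to the maximum.

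\textbf{Main obstacle.} The delicate point is verifying that the clone/merge operations never create a $p$-skeleton $(X,Y)$ — one has to argue carefully that a skeleton in the symmetrized graph can be "pulled back", i.e.\ that if $v'$ is the newly cloned copy of $v$ and $v'$ participates in a skeleton, then replacing $v'$ by $v$ (or, if both $v$ and $v'$ sit in $X\cup Y$, using that they were non-adjacent, which is incompatible with both lying in $X$ or with the $Y$-structure appropriately) yields a skeleton already present in $R$. This is where the definition of skeleton (the split into a weight-$1$ clique part $X$ and a weight-$\ge 1/2$ part $Y$, with $|X|+|Y|=p$) interacts nontrivially with which of the two clone moves we are allowed to perform on an edge $uv$: one move makes $v$ a twin of $u$ inside $R_1$ (legal only if it doesn't extend a weight-$1$ clique past the skeleton bound), the other deletes $uv$ and re-attaches — and one must show at least one of the two is always both non-decreasing for $\N_q$ and skeleton-safe. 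I expect the bulk of the write-up to be this case analysis, closely parallel to the symmetrization in \cite{EHSSz} and \cite{zykov} but bookkeeping the two weight levels simultaneously.
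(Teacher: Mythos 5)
Your proposal is a genuinely different decomposition from the paper's, and as written it has two substantial gaps, one of which is conceptual.

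\textbf{What differs.} You propose to symmetrize first with respect to the weight-$1$ subgraph $R_1$ (making it complete multipartite, giving the parts), and then within each part with respect to the weight-$1/2$ subgraph (giving the cells). The paper instead symmetrizes first with respect to \emph{true} nonadjacency (weight-$0$ edges), making the graph ``cellular'' in the sense that nonadjacent vertices have identical weight profiles, and then performs a second, structurally different transformation to eliminate $(\tfrac12,\tfrac12,1)$-triangles. These two decompositions are not equivalent at intermediate stages, although both terminate at a profile graph if the steps go through.

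\textbf{The first gap: ``linearity'' is not the right justification.} You assert that $\N_q(R)$ is linear in the weights incident to a fixed vertex $v$, so swapping $v$'s profile for a ``better'' one cannot decrease $\N_q$. This is only true if the two vertices being compared never lie together in a positive-weight $q$-clique, i.e.\ if $w(uv)=0$. But in your Step 1 you are symmetrizing pairs that are merely nonadjacent in $R_1$, which allows $w(uv)=\tfrac12$. In that case, cliques containing both $u$ and $v$ contribute the term
\[
w(uv)\sum_{S\in\binom{V\setminus\{u,v\}}{q-2}}\pi_S(u)\,\pi_S(v)\,w(S),
\]
which is \emph{quadratic}, not linear, in the pair of profiles. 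After copying $u$'s profile onto $v$, this becomes $w(uv)\sum_S\pi_S(u)^2 w(S)$, and the net change in $\N_q$ cannot be controlled by comparing $q$-weights alone. The paper confronts exactly this phenomenon, but only in its second step, where $x$ and $y$ are \emph{adjacent} (weight $\ge\tfrac12$), and there it does \emph{not} use a greedy ``pick the larger $q$-weight'' symmetrization. Instead it considers both partial symmetrizations $R_x$ and $R_y$, adds the two changes, and observes that the cross-terms cancel to give
\[
\N_q(R_x)+\N_q(R_y)-2\N_q(R)
= \tfrac12\sum_{S\in\binom{V(R)\setminus\{x,y\}}{q-2}}\bigl(\pi_S(y)-\pi_S(x)\bigr)^2 w(S)\ \ge\ 0,
\]
so at least one of the two moves is admissible. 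Your Step 1 needs this averaging trick too (whenever $w(uv)=\tfrac12$); it is not supplied.

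\textbf{The second gap: skeleton-safety of the clone.} You flag this as ``the main obstacle'' but do not resolve it, and it is genuinely delicate precisely in the $w(uv)=\tfrac12$ case. If you clone $u$'s full profile onto $v$ while leaving $w(uv)=\tfrac12$, then a $p$-skeleton $(X,Y)$ in the new graph with $v\in X$ and $u\in Y\setminus X$ cannot be ``pulled back'' by the naive swap $v\mapsto u$, because $u$ already sits in $Y$ and you would need two copies of $u$. The paper sidesteps this by keeping the first symmetrization to weight-$0$ pairs (where $u$ and $v$ cannot both lie in $Y$, so the swap argument is clean) and, in the second step, by using a \emph{partial} replacement (only resetting $w(yv)$ where both $w(xv)$ and $w(yv)$ are positive) together with the observation that the graph has no $(p-s)$-clique of all weight-$1$ edges, which is preserved under the partial replacement. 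Your all-at-once clone does not obviously preserve that invariant, so a different argument is needed and you have not sketched one.

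\textbf{Summary.} The decomposition weight-$1$-first, weight-$\tfrac12$-second is a reasonable idea and, after the two gaps are filled with an averaging lemma and a careful skeleton-pullback, could plausibly be made to work; but as written the central ``linearity'' justification is wrong for the $w(uv)=\tfrac12$ pairs that your Step~1 must handle, and the skeleton-safety issue you identify is left open. The paper's order of operations (weight-$0$ symmetrization first, then a dedicated averaging argument to kill $(\tfrac12,\tfrac12,1)$-triangles) is designed precisely so that the easy linear argument is used only where it is valid and the averaging argument is isolated to the one step that needs it.
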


\begin{proof}
Let $R$ be an $n$-vertex $p$-skeleton-free weighted graph with $\N_q(R)$ maximum. We will transform $R$ into a $p$-skeleton-free profile graph without decreasing $\N_q(R)$. But first, we will show that we can find a graph $R$ with the following two properties:

Say $R$ is \textit{cellular} if, whenever $x$ and $y$ are nonadjacent vertices in $R$, and $z$ is another vertex, we have $w(xz)=w(yz)$. Observe that when viewing a simple graph as a weighted graph with all edge weights $0$ or $1$, cellular is exactly the notion of complete multipartiteness. Next, define a \textit{$(\frac12,\frac12,1)$-triangle} as vertices $x,y,z$ such that $w(xy)=1/2$, $w(xz)=1/2$, and $w(yz)=1$.

We will first show that we can find a $p$-skeleton-free graph $R$ maximizing $\N_q(R)$ that is cellular and $(\frac12,\frac12,1)$-triangle-free. 
To do so, say the $q$-{\it weight} of a vertex $v$ is the sum of the weights of the $q$-cliques incident to $v$, i.e., denote
\[
w_q(v) := \sum_{v \in K \in \binom{V(R)}{q}} w(K). 
\]
Let $x,y$ be distinct non-adjacent vertices with $w_q(x) \geq w_q(y)$. Let us {\it symmetrize} $y$ to $x$, i.e., for each $z \in V(R) \setminus \{x,y\}$ replace the edge weight on $yz$ with that on $xz$. This operation does not decrease the weight $\N_q(R)$ as $w_q(x) +w_q(y) + \N_q(R-x-y) \leq 2w_q(x) + \N_q(R-x-y)$. There is no $p$-skeleton containing $x$, so after symmetrizing there can be no $p$-skeleton containing $y$, therefore $R$ remains $p$-skeleton-free.

We repeatedly apply this operation in the following manner. Fix an arbitrary order
 $v_1,v_2,\ldots,v_n$ of the vertices of $R$. Let $U$ be the set of these vertices, and begin with $S=\emptyset$. While $U$ is nonempty, proceed as follows:

Take a vertex with largest $q$-weight in $U$, say $v_i$, and move it to $S$. For each $v_j\in U$ non-adjacent to $v_i$, symmetrize $v_j$ to $v_i$, then move $v_j$ from $U$ to $S$. Iterating these two steps, repeatedly moving a largest $q$-weight vertex from $U$ to $S$ and symmetrizing its non-neighbors to it, yields a cellular graph.

Now, suppose we have vertices $x,y,z$ such that $w(xz)=w(yz)=1/2$ but $ w(xy)=1$. We show that we can transform our graph in a way that reduces the number of such triples while maintaining each of our other desired properties---namely, $\N_q(R)$ maximal, $p$-skeleton-freeness and cellularity. Before we do so, suppose $R$ has $s$ classes.
Observe that $R$ does not contain a $(p-s)$-clique whose edges are all $1$. Otherwise take these vertices as $X$. Adding one vertex from each of the remaining classes to form $Y$ yields a $p$-skeleton $(X,Y)$, a contradiction. A single vertex is a $1$-clique whose edges are all $1$, so it follows that $s<p-1$.

Now we perform a second transformation step. Let $x,y, z$ be vertices in different classes of $R$ such that $w(xy)=w(yz)=1/2$, $xz=1$, i.e., they form a $\{1/2,1/2,1\}$-triangle. Let $R_x$ denote the graph yielded from $R$ by, whenever we have a vertex $v$ such that $w(xv)>0,w(yv)>0$, replace the weight on $yv$ with the weight on $xv$. As $R$ does not contain a $(p-s)$-clique of all edge-weights $1$, then neither will $R_x$. Define $R_y$ similarly, and see that neither $R_x$ nor $R_y$ contains a $p$-skeleton.

We claim either $\N_q(R_x) \geq \N_q(R)$ or $\N_q(R_y) \geq \N_q(R)$.
Suppose to the contrary that $\N_q(R_x) < \N_q(R)$ and $\N_q(R_y) < \N_q(R)$. We introduce the following notation: for a vertex $v\in V(R)$ and set of vertices $S\subseteq V(R)$, let $\pi_S(v):=\prod_{u\in S}w(vu)$. Notably, if $S$ is a set of vertices with weight $w(S)$, then the weight $w(S\cup \{v\})$ is  $\pi_S(v)w(S)$. 

Given our above assumption, we have
\begin{align*}
0 & > \N_q(R_x)-\N_q(R) \\
& = \sum_{S\in \binom{V(R)\setminus\{x,y\}}{q-2}}\frac{1}{2}(\pi_S(x)-\pi_S(y))\pi_S(x)w(S)+ \sum_{T\in \binom{V(R)\setminus\{x,y\}}{q-1}}(\pi_T(x)-\pi_T(y))w(T)    
\end{align*} 
and 
\begin{align*}
0&>\N_q(R_y)-\N_q(R) \\
&= \sum_{S\in \binom{V(R)\setminus\{x,y\}}{q-2}}\frac{1}{2}(\pi_S(y)-\pi_S(x))\pi_S(y)w(S)+ \sum_{T\in \binom{V(R)\setminus\{x,y\}}{q-1}}(\pi_T(y)-\pi_T(x))w(T).    
\end{align*}
    Adding these two inequalities, we see the second terms cancel and we have 
\[
0> \sum_{S\in \binom{V(R)\setminus\{x,y\}}{q-2}}\frac{1}{2}(\pi_S(x)-\pi_S(y))\pi_S(x)w(S) + \sum_{S\in \binom{V(R)\setminus\{x,y\}}{q-2}}\frac{1}{2}(\pi_S(y)-\pi_S(x))\pi_S(y)w(S).
\]
Combining these sums and rearranging yields
\[
0>\frac{1}{2} \sum_{S\in \binom{V(R)\setminus\{x,y\}}{q-2}} (\pi_S(y)-\pi_S(x))^2 w(S), 
\]
implying that a linear combination of squares with non-negative coefficients is negative, a contradiction. 

So, $\N_q(R_x)+\N_q(R_y)-2\N_q(R)\geq 0$. If $\N_q(R_x)-\N_q(R)<0$, then $\N_q(R_y)-\N_q(R)>0$, which would contradict our choice of $R$ maximizing $\N_q(R)$. Therefore, we must have $\N_q(R_x)=\N_q(R_y)=\N_q(R)$. Importantly, this means we can transform our graph from $R$ to either $R_x$ or $R_y$ without a decreasing $\N_q$ and remaining $p$-skeleton-free.

Now, without loss of generality, assume $x$ is in at least as many $(\frac12,\frac12,1)$-triangles  as $y$ is. Then $R_y$ has fewer copies of such triangles  than $R$: recall that $x$ and $y$ were chosen along with a $z$ such that $x,y,z$ formed such a triangle that has been deleted. Moreover, each vertex that was nonadjacent to $x$ in $R$ remains nonadjacent in $R_y$, but will have a different neighborhood than $x$. Symmetrizing each such vertex to $x$ recovers the cellular property. After doing so, $R$ is $p$-skeleton-free with $\N_q(R)$ maximum and has fewer $(\frac12,\frac12,1)$-triangles than before.
We continue this process until $R$ has no $(\frac12,\frac12,1)$-triangles. 

In a cellular graph, nonadjacency is an equivalence relation. So the graph admits a natural partition into classes that are maximal independent sets---call these \emph{cells}.
It is easily verified that, if a cellular graph is $(\frac12,\frac12,1)$-triangle-free, it admits another natural equivalence relation, where $x\sim y$ if $w(xy)\leq 1/2$. Call these equivalence classes \emph{parts}. Note that the partition into cells is a refinement of the partition into parts.  Vertices in different parts are joined by an edge of weight $1$. Vertices in the same part but different cells are joined by an edge of weight $1/2$. And vertices in the same cell are nonadjacent.

Now we have that $R$ is $p$-skeleton-free, maximizes $\N_q(R)$, and is both cellular and $(\frac12,\frac12,1)$-triangle-free. It remains to show that we may suppose two cells belonging to the same part must be of equal-as-possible size.
If not, replacing the neighborhood of a vertex $y$ in the larger cell with the neighborhood of a vertex $x$ in the smaller cell increases edges between the cells, without altering the number of $K_q$ using one or fewer vertices from these two cells. 
 With this we may conclude that $R$ can be transformed into a profile graph that is $p$-skeleton-free with $\N_q(R)$ maximum.
\end{proof}

\section{Proofs}\label{section-proofs}

In this section we prove our main Theorems~\ref{k4-thm}, \ref{k5-thm}, \ref{q-plus}, and \ref{general-q}. Along the way we reprove Theorems~\ref{edge-thm} and \ref{triangle-thm}. Each of these proofs use the same general approach. Beginning with an extremal graph $G$, we build a weighted reduced graph $R$ via the Regularity Lemma. Then by the Weighted Zykov Theorem we show $R$ can be assumed to be a profile graph. Through a series of intermediate lemmas we show that we can assume that the profile of $R$ satisfies a certain structure. Given this, we can construct a graph $H$ from $\mathcal{G}(n;s,t)$ that has the same (asymptotically) number of  $K_q$ as $G$. As we understand the bounded structure of $H$, we can estimate $\N_q(G)$ by $\N_q(H)$ and thus determine $\mathfrak{R}_q(p)$.

We include the needed standard definitions to state the Szemer\'edi Regularity Lemma~\cite{SZRL} (see~\cite{reg-survey} for a survey). A pair of vertex classes $(U,V)$ have \emph{density} $d(U,V) = \frac{e(U,V)}{|U||V|}$ where $e(U,V)$ counts the edges between $U$ and $V$. For $\varepsilon>0$, the pair $(U,V)$ is \emph{$\varepsilon$-regular} if for every $U' \subseteq U$ and $V'\subseteq V$ with $|U'| \geq \varepsilon|U|$ and $|V'| \geq \varepsilon|V|$ we have $|d(U,V)-d(U',V')| < \varepsilon$.

\begin{lemma}[Szemer\'edi Regularity Lemma~\cite{SZRL}]
    For $\varepsilon >0$, there exists an $M = M(\varepsilon)$ such that, for every graph $G$, there is partition $V_1,V_2,\dots, V_r$ of the vertices of $G$ into $r$ classes such that
    \begin{itemize}
        \item $1/\varepsilon \leq r \leq M$,
        \item $||V_i|-|V_j|| \leq 1$ for all $i,j$,
        \item $(V_i,V_j)$ is $\varepsilon$-regular for all but at most $\varepsilon r^2$ of the pairs $(i,j)$.
    \end{itemize}
\end{lemma}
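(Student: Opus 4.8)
The plan is to prove the Regularity Lemma by Szemer\'edi's classical \emph{index-increment} (energy-increment) argument. For a partition $\mathcal{P}=\{V_1,\dots,V_k\}$ of $V(G)$, define the \emph{index}
\[
q(\mathcal{P}) := \sum_{i,j} \frac{|V_i|\,|V_j|}{n^2}\, d(V_i,V_j)^2 \;\in\; [0,1].
\]
The whole proof rests on a dichotomy: a partition that is \emph{not} $\varepsilon$-regular can always be refined so that its index jumps up by a fixed amount depending only on $\varepsilon$; since the index never exceeds $1$, this can happen only boundedly often, so the iteration must terminate at an $\varepsilon$-regular partition, and tracking how the number of classes grows gives the bound $M(\varepsilon)$.

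First I would record two elementary inequalities. \textbf{Monotonicity:} if $\mathcal{P}'$ refines $\mathcal{P}$ then $q(\mathcal{P}')\ge q(\mathcal{P})$, which follows from convexity of $t\mapsto t^2$ (Jensen) applied within each pair of classes, since $d(V_i,V_j)$ is a weighted average of the densities of the sub-pairs. \textbf{Defect boost:} if $(U,V)$ fails to be $\varepsilon$-regular, witnessed by $U'\subseteq U$, $V'\subseteq V$ with $|U'|\ge\varepsilon|U|$, $|V'|\ge\varepsilon|V|$ and $|d(U',V')-d(U,V)|\ge\varepsilon$, then splitting $U$ into $\{U',U\setminus U'\}$ and $V$ into $\{V',V\setminus V'\}$ raises the contribution of this pair to the index by at least $\varepsilon^4\frac{|U|\,|V|}{n^2}$; this is the defect form of Cauchy--Schwarz, and it is the one genuinely quantitative computation in the proof.

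Then comes the core step. Given a partition $\mathcal{P}$ into $k$ classes that is not $\varepsilon$-regular --- so more than $\varepsilon k^2$ of the pairs are irregular --- simultaneously refine each class using the witness sets of all irregular pairs it participates in. The common refinement has at most $k\cdot 2^{k}$ classes, and summing the defect boosts over the $>\varepsilon k^2$ irregular pairs shows its index exceeds $q(\mathcal{P})$ by at least $\varepsilon^5$. One then re-balances: chop every class into pieces of a common size $m$ chosen small relative to the current number of classes, and dump the bounded remainders into a single exceptional class; for $m$ suitably small this re-balancing costs a negligible amount of index while restoring $||V_i|-|V_j||\le 1$. Iterating from the partition into $\lceil 1/\varepsilon\rceil$ arbitrary classes, each non-regular stage raises the index by essentially $\varepsilon^5$ while it stays bounded by $1$, so the process halts after at most about $1/\varepsilon^5$ stages at an $\varepsilon$-regular equipartition; iterating $k\mapsto k2^{k}$ a bounded number of times bounds the number of classes, giving the tower-type $M=M(\varepsilon)$, and $r\ge 1/\varepsilon$ holds by the choice of starting partition.

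I expect the main obstacle to be the bookkeeping in the re-balancing step: one must keep the partition equitable throughout the iteration while guaranteeing that neither the rounding to nearly-equal classes nor the creation of the exceptional set erodes the $\varepsilon^5$ index gain. This forces a careful coupling of the granularity $m$ to the number of classes at each stage, together with a uniform bound on the exceptional class; everything else (monotonicity, the defect boost, and the termination count) is routine once these parameters are chosen correctly.
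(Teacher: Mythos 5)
The paper does not prove this statement: it quotes Szemer\'edi's Regularity Lemma as a black box with a citation to \cite{SZRL} and uses it to build the weighted reduced graph $R$. There is therefore no ``paper proof'' to compare against. Your sketch is the standard energy-increment argument, and its skeleton is correct: the index $q(\mathcal{P})$ is bounded in $[0,1]$ and monotone under refinement by Jensen; the defect form of Cauchy--Schwarz gives a boost of order $\varepsilon^4\,|U||V|/n^2$ when an $\varepsilon$-irregular pair is split by its witness sets; taking the common refinement of all witness sets multiplies the number of classes by at most $2^{k-1}$ per class and, summed over the $>\varepsilon k^2$ irregular pairs of roughly equal classes, yields an index gain of order $\varepsilon^5$; since the index is bounded by $1$ this happens $O(\varepsilon^{-5})$ times, and iterating the class count bound gives a tower-type $M(\varepsilon)$, with $r\ge 1/\varepsilon$ ensured by the initial partition.

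The one place you should be careful is the re-equitation step, and you correctly flag it as the delicate point. As stated in the paper there is no exceptional class $V_0$, so you cannot simply ``dump remainders into a single exceptional class'' at the end: you must either (i) work with an exceptional set that stays below, say, $\varepsilon n$ throughout the iteration and then redistribute it at the very end, accepting that its classes contribute fewer than $\varepsilon r^2$ bad pairs, or (ii) prove a version with exceptional class and then derive the equitable version. Either route works, but the bookkeeping (that re-equitating does not destroy the $\varepsilon^5$ gain, and that the exceptional set stays uniformly small across all $O(\varepsilon^{-5})$ rounds) is exactly where blind sketches of this proof most often go wrong, so it deserves to be written out rather than waved at. Two harmless slips: the common refinement has at most $k\cdot 2^{k-1}$, not $k\cdot 2^{k}$, classes; and the claimed gain should really be quoted with the correct constant (commonly $\varepsilon^5/2$ or similar), though this does not affect the tower bound.
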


\smallskip

\noindent Let $q \geq 2$ and $p \geq q+2$. Fix constants from right to left satisfying 
\[
0 < \alpha \ll \varepsilon \ll \delta \ll \gamma < 1.
\]
Suppose $n$ is sufficiently large and let $G$ be an extremal graph for $\rt(n,\#K_q,K_p,\alpha n)$. That is, $G$ is $n$-vertex $K_p$-free with independence number at most $\alpha n$ and the maximum number of $K_q$ copies.
We will show that there is a graph $H$ in $\mathcal{G}(n;s,t)$ (for $s$ and $t$ to be determined later) such that $H$ is $K_p$-free and has independence number at most $\alpha n$ and
\[
\N_q(H) \leq \N_q(G) \leq \N_q(H) +  4 \gamma n^q.
\]
Apply the Regularity Lemma with $\varepsilon$ to $G$ to obtain an 
equipartition $V_1,V_2,\dots, V_r$ into $r$ parts such that $1 /\varepsilon \leq r \leq M=M(\varepsilon)$. 
Throughout this section we will ignore floors and ceilings on $\lfloor \frac{n}{r} \rfloor \leq |V_i| \leq \lceil \frac{n}{r} \rceil$ and assume $|V_i| = \frac{n}{r}$ for all $i$.

A usual ``cleanup'' argument shows that there are at most $\varepsilon n^q$ copies of $K_q$ using two vertices in a class $V_i$, at most $\varepsilon n^q$ copies of $K_q$ using an edge between $V_i,V_j$ that are not $\varepsilon$-regular, and at most $\delta n^q$ copies of $K_q$ using an edge between $V_i,V_j$ that have density less than $\delta$.

Build a weighted reduced graph $R$ on $r$ vertices $\{v_1,v_2,\dots, v_r\}$ as follows.

\begin{itemize}
    \item $v_iv_j$ is an edge of weight $1$ if $(V_i,V_j)$ is $\varepsilon$-regular and has density at least $1/2+\delta$. 
    
    \item $v_iv_j$ is an edge of weight $1/2$ if $(V_i,V_j)$ is  $\varepsilon$-regular  and has density at least $\delta$.
    
    \item $v_iv_j$ is a non-edge otherwise.
\end{itemize}

That $R$ contains no $p$-skeleton $(X,Y)$, follows from a lemma implicit in \cite{EHSSz} (see also \cite{balogh2017problems}). It can be proved via a standard embedding argument with $\varepsilon$-regular pairs of sufficient density and sublinear independence number.

\begin{lemma}(Erd\H os, Hajnal, S\'os, Szemer\'edi, \cite{EHSSz})
    For every  $\varepsilon > 0$, and integer $k$, there exists $\alpha>0$ such that for every $n$-vertex graph $G$   with independence number at most $\alpha n$, if the weighted reduced graph $R$ contains a $k$-skeleton, then $G$ contains a clique $K_k$, for every $n$ sufficiently large. 
\end{lemma}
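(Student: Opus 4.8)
The plan is to prove the lemma by a standard greedy embedding of $K_k$ into $G$, in which the $\varepsilon$-regular pairs of linear density recorded by the $k$-skeleton provide the edges between classes, while the hypothesis $\alpha(G)\le \alpha n$ provides edges \emph{inside} classes. Write the $k$-skeleton as a pair of families of regularity classes $\mathcal{X}\subseteq\mathcal{Y}$ with $|\mathcal{X}|+|\mathcal{Y}|=k$. By the construction of $R$, every pair of classes in $\mathcal{Y}$ has density at least $\delta$ (its $R$-edge has weight at least $1/2$), and every pair of classes in $\mathcal{X}$ has density greater than $\tfrac12+\delta$ (weight $1$). We build $K_k$ by taking two adjacent vertices from each class of $\mathcal{X}$ and one vertex from each class of $\mathcal{Y}\setminus\mathcal{X}$; this is the correct count since $2|\mathcal{X}|+|\mathcal{Y}\setminus\mathcal{X}|=|\mathcal{X}|+|\mathcal{Y}|=k$. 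We choose $\alpha$ small enough in terms of $\varepsilon$, $\delta$ and $k$ (recall the hierarchy $\alpha\ll\varepsilon\ll\delta$) that every subset of a regularity class of size at least some fixed power of $\delta$ times $|V_i|$ has more than $\alpha n\ge\alpha(G)$ vertices, and is therefore not independent and contains an edge.

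During the embedding, each class $V_i$ not yet processed carries a candidate set $C_i\subseteq V_i$, equal to the intersection of $V_i$ with the neighbourhoods of all vertices embedded so far; the invariant we maintain is that $|C_i|$ stays above $\varepsilon|V_i|$, and indeed above a fixed positive fraction of $|V_i|$ depending only on $\delta$ and $k$, throughout the at most $k$ rounds. \emph{Phase 1:} process the classes of $\mathcal{Y}\setminus\mathcal{X}$ one at a time. To process $V_i$, note that by $\varepsilon$-regularity and the size bounds on the candidate sets, all but at most $k\varepsilon|V_i|$ vertices $u\in C_i$ satisfy $|N(u)\cap C_{i'}|\ge (d(V_i,V_{i'})-\varepsilon)|C_{i'}|$ for every class $V_{i'}$ still to be processed after $V_i$; pick such a $u$, add it to the clique, and replace each such $C_{i'}$ by $C_{i'}\cap N(u)$, which preserves the invariant since $d(V_i,V_{i'})-\varepsilon\ge\delta-\varepsilon$. \emph{Phase 2:} process the classes of $\mathcal{X}$ one at a time; observe that after Phase 1 the candidate set of every class in $\mathcal{X}$ is already contained in the neighbourhood of every vertex chosen in Phase 1. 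To process $V_i\in\mathcal{X}$, delete from $C_i$ the at most $k\varepsilon|V_i|$ vertices that fail to have at least $(d(V_i,V_{i'})-\varepsilon)|C_{i'}|\ge(\tfrac12+\delta-\varepsilon)|C_{i'}|$ neighbours in $C_{i'}$ for some not-yet-processed $V_{i'}\in\mathcal{X}$; the remaining set still has linear size, hence is not independent and contains an edge $\{u,u'\}$. Add both $u$ and $u'$ to the clique, and replace each remaining $C_{i'}$ with $V_{i'}\in\mathcal{X}$ by $C_{i'}\cap N(u)\cap N(u')$; a union bound gives $|C_{i'}\cap N(u)\cap N(u')|\ge\big(2(\tfrac12+\delta-\varepsilon)-1\big)|C_{i'}|=(2\delta-2\varepsilon)|C_{i'}|$, a positive fraction of $|C_{i'}|$, so the invariant survives.

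When the process ends we have selected $2|\mathcal{X}|+|\mathcal{Y}\setminus\mathcal{X}|=k$ vertices, all distinct (distinct classes, and the two vertices taken in a class of $\mathcal{X}$ are adjacent hence distinct), and every pair among them is an edge: the two vertices from a single class of $\mathcal{X}$ by construction, and two vertices from different classes because the later-chosen one lay in the candidate set of its class, which had been intersected with the neighbourhood of the earlier-chosen one. (This last point is exactly why Phase 1 also restricts the candidate sets of the classes of $\mathcal{X}$.) Hence $G$ contains $K_k$, as required.

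The crux of the argument, and the reason for processing all single-vertex classes before the doubled ones, is the weight-$\tfrac12$ edges of $R$ that may run between a class of $\mathcal{X}$ and a class of $\mathcal{Y}\setminus\mathcal{X}$. If one first fixes the pair $u,u'$ inside an $\mathcal{X}$-class and then tries to extend the clique across such a pair of density only $\delta$, then $N(u)\cap N(u')$ need not meet the other class at all, since the union bound is vacuous below density $\tfrac12$. Processing the single-vertex classes first circumvents this: by the time a class of $\mathcal{X}$ is reached, its candidate set already lies inside every previously chosen neighbourhood, so only pairs within $\mathcal{X}$ — all of density above $\tfrac12$ — must be controlled when the edge $\{u,u'\}$ is chosen. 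Everything else (the choice $\alpha\ll\varepsilon$, and tracking the multiplicative shrinkage of the candidate sizes over at most $k$ rounds) is routine.
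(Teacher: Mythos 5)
Your embedding is correct, and it is exactly the argument the paper has in mind: the paper does not actually prove this lemma but only cites \cite{EHSSz} and remarks that it follows from ``a standard embedding argument with $\varepsilon$-regular pairs of sufficient density and sublinear independence number,'' which is precisely what you carry out. The one genuinely non-routine point --- that the single-vertex classes of $\mathcal{Y}\setminus\mathcal{X}$ must be embedded \emph{before} the doubled classes of $\mathcal{X}$, because the weight-$\tfrac12$ pairs linking an $\mathcal{X}$-class to a $\mathcal{Y}\setminus\mathcal{X}$-class have density only about $\delta$ and so do not survive a two-vertex union bound --- you identify and handle correctly, and your bookkeeping of the candidate-set shrinkage (by at least $\delta-\varepsilon$ per Phase~1 step and $2\delta-2\varepsilon$ per Phase~2 step, versus the $O(k\varepsilon)$ loss to non-typical vertices) justifies the choice $\alpha\ll\varepsilon\ll\delta$ with $\alpha$ depending also on $M(\varepsilon)$.
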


This immediately implies that $R$ is $p$-skeleton-free. Next we establish a correspondence between copies of $K_q$ in $R$ and $G$ by a standard counting lemma.

\begin{lemma}[Counting Lemma]\label{counting-lemmma}
    Fix $\varepsilon >0$.
    Let $V_1,V_2,\dots, V_q$ be a set of $q$ vertex classes of $G$ such that each pair $(V_i,V_j)$ is $\varepsilon$-regular and has density $d(V_i,V_j)$.
    Then the number of copies of $K_q$ is at most
    \[
    \left(\prod_{i<j} d(V_i,V_j)  + \varepsilon \binom{q}{2}\right)\left(\frac{n}{r}\right)^q.
    \]    
\end{lemma}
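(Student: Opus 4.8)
The plan is an induction on $q$, stripping off one vertex class at a time; this is the standard way to count transversal cliques across an $\varepsilon$-regular tuple, so I expect the argument to be routine modulo bookkeeping. Here by a copy of $K_q$ I mean a transversal one, with a single vertex in each of $V_1,\dots,V_q$, which is what is relevant in the application to $q$-cliques of the reduced graph. For the base case $q=2$ the number of such copies is the number of edges between $V_1$ and $V_2$, namely $d(V_1,V_2)|V_1||V_2|=d(V_1,V_2)\left(\frac{n}{r}\right)^2\le\left(d(V_1,V_2)+\varepsilon\right)\left(\frac{n}{r}\right)^2$, which is the claimed bound since $\binom{2}{2}=1$.

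For the inductive step, write the number of transversal copies of $K_q$ as $\sum_{v\in V_q}c(v)$, where $c(v)$ counts the transversal copies of $K_{q-1}$ lying in the sets $W_i^v:=N(v)\cap V_i$ for $1\le i\le q-1$. Call $v$ \emph{atypical} if $|W_i^v|>\left(d(V_i,V_q)+\varepsilon\right)|V_i|$ for some $i<q$. Because each pair $(V_i,V_q)$ is $\varepsilon$-regular, the set of $v$ violating this bound for a fixed $i$ has fewer than $\varepsilon|V_q|$ vertices---otherwise that set together with $V_i$ would contradict $\varepsilon$-regularity---so there are fewer than $(q-1)\varepsilon\left(\frac{n}{r}\right)$ atypical vertices, and since $c(v)\le\prod_{i<q}|W_i^v|\le\left(\frac{n}{r}\right)^{q-1}$ always, their total contribution is at most $(q-1)\varepsilon\left(\frac{n}{r}\right)^q$. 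For a typical $v$ we have $|W_i^v|\le\left(d(V_i,V_q)+\varepsilon\right)\frac{n}{r}$; moreover each $W_i^v$ is either a large enough subset of $V_i$ that the pair $(W_i^v,W_j^v)$ inherits $\varepsilon$-regularity with density at most $d(V_i,V_j)+\varepsilon$---the situation in every application of this lemma, where the densities exceed a constant $\delta\gg\varepsilon$---or else so small that $c(v)$ is negligible, so the inductive hypothesis applied to $W_1^v,\dots,W_{q-1}^v$ gives $c(v)\le\bigl(\prod_{1\le i<j\le q-1}d(V_i,V_j)+\binom{q-1}{2}\varepsilon\bigr)\prod_{i<q}|W_i^v|$ up to harmless lower-order terms. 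Substituting $|W_i^v|\le\left(d(V_i,V_q)+\varepsilon\right)\frac{n}{r}$, summing over the at most $\frac{n}{r}$ typical vertices, expanding $\prod_{i<q}\left(d(V_i,V_q)+\varepsilon\right)$ using $d(V_i,V_q)\le 1$, and adding the atypical contribution, all the $\varepsilon$-terms coalesce into a single error and yield $\bigl(\prod_{1\le i<j\le q}d(V_i,V_j)+\binom{q}{2}\varepsilon\bigr)\left(\frac{n}{r}\right)^q$.

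The one genuine subtlety, and the step I expect to be the main obstacle, is that the sets $W_i^v$ appearing in the recursion are not themselves among the original regular pairs, so the argument above relies on the standard fact that $\varepsilon$-regularity is inherited (with a controlled loss) by not-too-small subsets; alternatively one notes that pairs with a tiny intersection contribute negligibly and restricts to the large case. The rest is purely quantitative---tracking how the $\varepsilon$'s from each level of the recursion add up---and since every estimate in this paper is claimed only up to an additive $o(n^q)$, the precise constant multiplying $\varepsilon$ is immaterial: any fixed function of $q$, in particular $\binom{q}{2}$, would serve equally well in its place.
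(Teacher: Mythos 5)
The paper states this as a standard counting lemma and gives no proof, so there is nothing in the paper to compare against directly; the only question is whether your argument is sound. It is not, as written, and the gap is exactly where you yourself flag it.

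In the inductive step you write $c(v)\le\bigl(\prod_{i<j<q}d(V_i,V_j)+\binom{q-1}{2}\varepsilon\bigr)\prod_{i<q}|W_i^v|$, which invokes the inductive hypothesis applied to the pairs $(W_i^v,W_j^v)$. But the inductive hypothesis requires those pairs to be $\varepsilon$-regular, and the slicing lemma only gives that a subset $W_i^v$ of relative size $\gamma=|W_i^v|/|V_i|$ produces a $\max(2\varepsilon,\varepsilon/\gamma)$-regular pair. Your "large case" threshold is $\gamma\ge\varepsilon$, which yields only $1$-regularity and is vacuous. To have both (a) the small case contribute negligibly and (b) the large case inherit useful regularity, you need a threshold $\gamma$ with $\varepsilon\ll\gamma\ll 1$ (say $\gamma=\sqrt\varepsilon$); the recursion then degrades the regularity parameter geometrically over the $q-1$ levels, and the resulting error is some $h_q(\varepsilon)\to 0$ but not $\binom{q}{2}\varepsilon$. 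Invoking the application's $\delta$-density floor is not a fix either: the lemma is stated unconditionally, and even under $d_{ij}\ge\delta$, regularity gives no pointwise \emph{lower} bound on $|W_i^v|$, only that few $v$ violate one, which is a different bookkeeping than what you wrote.

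You are right that the precise constant multiplying $\varepsilon$ is immaterial for the paper's use of the lemma, since the error is absorbed into $\gamma\gg\varepsilon$. Still, the proof as proposed does not establish the stated bound, and the cleanest rigorous version sidesteps subset-regularity entirely: build the clique one vertex at a time, and at step $k+1$ use $\varepsilon$-regularity of the \emph{original} pair $(V_{k+1},V_j)$ restricted to the current candidate set $V_j\cap\bigcap_{i\le k}N(v_i)$ to bound how many choices of $v_{k+1}$ shrink that candidate set by more than a $(d_{k+1,j}+\varepsilon)$-factor. This keeps every regularity application at the original $\varepsilon$ and yields an error of the form $C_q\varepsilon$ with no compounding.
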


Let $K$ be a copy of a $K_q$ in the reduced graph $R$. 
An edge of weight $1/2$ in $K$ corresponds to a pair of classes in $G$ of density less than $1/2+\delta$ and an edge of weight $1$ in $K$ corresponds to a pair of classes in $G$ of density at most $1$. Therefore, by Lemma~\ref{counting-lemmma}, the clique $K$ corresponds to at most 
\[
\left(w(K) + \gamma \right) \left(\frac{n}{r}\right)^q
\]
copies of $K_q$ in $G$,  so the  number of $K_q$ copies in $G$ can be upper bounded
\[
\N_q(G) \leq \left(\N_q(R) +\binom{r}{q} \gamma \right) \left(\frac{n}{r}\right)^q + (2\varepsilon+\delta)n^q \leq \N_q(R)  \left(\frac{n}{r}\right)^q + 2\gamma n^q.
\]

In the following we will establish a number of claims that allow us to adopt (without loss of generality) assumptions on the structure of $R$ by transforming $R$ in such a way that $\N_q(R)$ does not decrease by more than $\gamma r^q$.
Therefore, after adopting these transformations, $R$ will satisfy
 the inequality 
 \begin{equation}\label{upper-bound-ineq}
 \N_q(G) \leq  \N_q(R)  \left(\frac{n}{r}\right)^q + 3\gamma n^q.    
 \end{equation}
By the Weighted Zykov Theorem (Theorem~\ref{w-zykov}), we may suppose $R$ is a profile graph with $t$ parts and profile $(s_1,s_2,\dots, s_t)$. Let $s:= \sum_{i=1}^t s_i$ be the total number of cells in $R$. Clearly, if $\N_q(R)>0$ we must have the trivial bounds $s \geq q$ and $s \geq t \geq 1$ and as $R$ contains no $p$-skeleton, we have $t+s\leq p-1$.
As $R$ is a profile graph, the cells in a part form a complete balanced multipartite graph. 
Therefore, two cells in part differ in cardinality by at most $1$.
For simplicity of computation, we will make the additional assumption that the cells in a part are of exactly the same. Indeed, removing a single vertex from $R$ destroys at most $\varepsilon r^q$ copies of $K_q$ and as there are at most $p-2$ cells in $R$, we remove vertices from $R$ until each pair of cells in the same part have the same cardinally. at the cost of $(p-2)\varepsilon  r^q \ll \delta r^q$ copies of $K_q$.


\subsection{Profile Lemmas}

In this subsection we prove lemmas that allow us to control the structure of the profile of $R$, e.g., the number of cells per part.

\begin{claim}\label{part-plus-cell}
    We may suppose $s+t=p-1$. 
\end{claim}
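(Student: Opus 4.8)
The plan is to begin from the inequality $s+t\le p-1$, which holds automatically: choosing one vertex from each part produces a set $X$ of $t$ vertices all of whose induced edges have weight $1$, and extending $X$ to a transversal $Y$ of the $s$ cells yields $X\subseteq Y$ with every edge of $Y$ of weight at least $1/2$, i.e.\ an $(s+t)$-skeleton, which forces $s+t\le p-1$ by $p$-skeleton-freeness. It therefore suffices to show that whenever $s+t<p-1$ we can replace $R$ by another $p$-skeleton-free profile graph, on the same number of vertices, with $\N_q$ unchanged and with $s+t$ one larger; iterating this lands us at $s+t=p-1$.

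The transformation I would use is \emph{cell-splitting}. Since $R$ has at least $1/\varepsilon$ vertices (even after the earlier cleanup), which is far larger than the at most $p-2$ cells, some cell $C$ has $|C|\ge 2$. Partition $C$ into two nonempty sets $C_1$ and $C_2$, declare them to be two separate cells of the part that contained $C$ (so every $C_1$--$C_2$ edge now gets weight $1/2$), and leave all other edge weights untouched. The resulting weighted graph $R'$ is again a profile graph whose profile has the same number of parts $t$ and total cell count $s+1$; since $(s+1)+t\le p-1$ by the case hypothesis, $R'$ is still $p$-skeleton-free. (If one insists on keeping cells within a part equal in size, this can be restored afterwards by deleting at most a constant number of vertices, destroying at most $\varepsilon r^q$ copies of $K_q$.) Every edge weight of $R'$ is at least the corresponding weight in $R$---the only pairs that changed were non-adjacent inside $C$ and are now joined by a weight-$1/2$ edge---so $w(K)$ computed in $R'$ is at least $w(K)$ computed in $R$ for every $q$-set $K$, and hence $\N_q(R')\ge\N_q(R)$. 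Because $R$ was chosen to maximize $\N_q$ among $p$-skeleton-free weighted graphs on this many vertices, the inequality is an equality, so $R'$ is again an optimal profile graph, now with $s+t$ increased by $1$.

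I do not expect a serious obstacle here; the argument is essentially bookkeeping. The two points that need care are (i) that a cell of size at least $2$ always exists, which is where we use that $r$ is huge compared to $p$, and (ii) that cell-splitting preserves both the profile-graph structure and $p$-skeleton-freeness, which is exactly where the hypothesis $s+t<p-1$ is consumed. It is worth noting that the naive alternative---peeling a single vertex off into a brand-new singleton part---raises $s+t$ by $2$ rather than $1$ and so cannot be used to hit $p-1$ on the nose; cell-splitting is the right operation precisely because it changes $s+t$ by exactly $1$.
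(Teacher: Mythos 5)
Your proof is correct and follows essentially the same cell-splitting strategy as the paper: observe $s+t\le p-1$ from $p$-skeleton-freeness, and while strict inequality holds, split one cell into two within its part (keeping $t$ fixed, raising $s$ by one, and only raising edge weights from $0$ to $1/2$ so that $\N_q$ cannot drop). The only cosmetic difference is that the paper picks a cell from a part of $\Omega(r)$ vertices and splits it into two halves of (almost) equal size, so the subsequent rebalancing step is negligible; your split into two arbitrary nonempty pieces can be badly unbalanced, in which case "deleting at most a constant number of vertices" to restore balanced cells is an overclaim — you should split as evenly as possible, or appeal to the general fact that rebalancing cells within a part does not decrease $\N_q$.
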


\begin{proof}
As $R$ contains no $p$-skeleton, we have $t+s\leq p-1$. So suppose $t+s< p-1$. 
Without loss of generality, we may suppose that part $1$ contains $\Omega(r)$ vertices. 
Then take a cell from part $1$ and partition it into cells $S$ and $S'$ of (almost) equal size and add all edges of weight $1/2$ between $S$ and $S'$. This defines a new profile $(s_1+1,s_2,\dots, s_t)$.
Clearly, this transformation does not decrease $\N_q(R)$ and the resulting graph is $p$-skeleton-free and has one more cell.
We may repeat this transformation until $t+s=p-1$.
\end{proof}



\begin{claim}[Relative cell size]\label{relative-cell}
     Let $T,T'$ be parts containing $a$ and $b$ cells, respectively, with $x$ the number of vertices in a cell in $T$ and $y$ the number of vertices in a cell in $T'$. If $a \geq b$, then we may suppose $y\geq x$, and thus if $a=b$, we may suppose $x=y$.
\end{claim}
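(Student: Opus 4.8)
The plan is to reduce the statement to a one‑parameter optimization. Fix the profile of $R$ and every cell of $R$ outside $T$ and $T'$, and let only the common cell size $x$ of the $a$ cells of $T$ and the common cell size $y$ of the $b$ cells of $T'$ vary; if $c$ denotes the number of vertices of $R$ in $V(T)\cup V(T')$, the only constraint is $ax+by=c$. Changing $x$ (and putting $y=(c-ax)/b$) does not alter which edges of $R$ have weight $1$, $1/2$, or $0$, so $R$ remains a $p$-skeleton-free profile graph and this is a legitimate family of moves. Writing $g(x):=\N_q(R)$ for $x\in[0,c/a]$, the sub-interval on which $y\ge x$ is exactly $[0,c/(a+b)]$, so it suffices to prove that $g$ is non-increasing on $[c/(a+b),c/a]$; then $\max g$ is attained at some $x^{\ast}\le c/(a+b)$, and taking $x=x^{\ast}$ gives a configuration with $y\ge x$ and at least as many copies of $K_q$. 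The case $a=b$ then follows by applying this also with $T$ and $T'$ interchanged.

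Next I would reduce monotonicity of $g$ to a comparison of vertex weights. One has $g'(x)=\partial_x\N_q-\tfrac{a}{b}\,\partial_y\N_q$. Since a profile graph has automorphisms permuting the $a$ equal cells of $T$, every vertex of $V(T)$ has the same $q$-weight $w_q(v_T)$, and double-counting incidences between $q$-cliques and $V(T)$ gives $x\,\partial_x\N_q=\sum_{u\in V(T)}w_q(u)=ax\,w_q(v_T)$, hence $\partial_x\N_q=a\,w_q(v_T)$; similarly $\partial_y\N_q=b\,w_q(v_{T'})$. Therefore $g'(x)=a\bigl(w_q(v_T)-w_q(v_{T'})\bigr)$, and the whole claim reduces to showing that in a profile graph with $a\ge b$ and $x\ge y$, a vertex of $T$ has $q$-weight at most that of a vertex of $T'$.

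To prove this weight comparison, put $C_m:=\N_{q-m}\bigl(R-V(T)-V(T')\bigr)\ge 0$; since every edge leaving $V(T)\cup V(T')$ has weight $1$, $C_m$ depends only on the parts of $R$ other than $T$ and $T'$. Grouping the $q$-cliques through $v_T$ (resp.\ $v_{T'}$) by the numbers $i,j$ of cells of $T,T'$ they use gives
\[
w_q(v_T)=\sum_{i\ge 1,\,j\ge 0}\binom{a-1}{i-1}\binom{b}{j}\left(\tfrac12\right)^{\binom i2+\binom j2}C_{i+j}\,x^{i-1}y^{j},
\]
\[
w_q(v_{T'})=\sum_{i\ge 0,\,j\ge 1}\binom{a}{i}\binom{b-1}{j-1}\left(\tfrac12\right)^{\binom i2+\binom j2}C_{i+j}\,x^{i}y^{j-1}.
\]
After reindexing, $w_q(v_T)-w_q(v_{T'})=\sum_m C_m\,\Sigma_m(x,y)$ with each $\Sigma_m$ a homogeneous polynomial of degree $m-1$, so (as $C_m\ge 0$) it is enough to show $\Sigma_m(x,y)\le 0$ for all $x\ge y\ge 0$ and $a\ge b$. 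For this I would pair the monomial of $\Sigma_m$ indexed by $(i,j)$ with the one indexed by $(j,i)$: for $i>j$ and $a\ge b$ one has $\binom ai\binom bj\ge\binom aj\binom bi$ (the ratio $\binom ai/\binom aj$ is non-decreasing in $a$) and $x^iy^j\ge x^jy^i$ (since $x\ge y$), which makes every such pair non-positive, while the single left-over ``diagonal'' monomial $(i,i)$ is non-negative but of lower order and, for $x\ge y$, is dominated by a neighbouring non-positive pair; summing the resulting groups gives $\Sigma_m\le 0$.

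I expect this last point to be the main obstacle: the naive term-by-term comparison of $w_q(v_T)$ and $w_q(v_{T'})$ genuinely fails (a clique through $v_T$ using many cells of $T$ has no counterpart through $v_{T'}$ when $b$ is small), so one must commit to an explicit grouping of the monomials of $\Sigma_m$ and verify, uniformly in $a,b,q$, that the non-negative diagonal contributions are absorbed by neighbouring negative ones in the range $x\ge y$ --- a finite but fiddly computation. Everything preceding it (the reduction to optimizing $g$, the identity $\partial_x\N_q=a\,w_q(v_T)$, and the expansions above) is routine. Once $\Sigma_m\le 0$ is in hand, $g'\le 0$ on $[c/(a+b),c/a]$, which is exactly what the first paragraph needs.
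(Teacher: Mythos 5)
Your overall plan --- fixing the rest of $R$, varying $x$ and $y$ subject to $ax+by=c$, and reducing to the monotonicity of $\N_q$ via $\partial_x\N_q = a\,w_q(v_T)$ and $\partial_y\N_q=b\,w_q(v_{T'})$ --- is correct and genuinely different from the paper's. The paper instead performs a \emph{discrete} transformation, moving a single vertex $u$ from a cell $S$ of $T$ to a cell $S'$ of $T'$, and shows this does not decrease $\N_q$; the continuous derivative is not taken.

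The difference matters precisely at the step you flag as the main obstacle, and that step is a genuine gap. Writing $\Sigma_m = \sum_{k+\ell=m-1}\bigl(\tfrac12\bigr)^{\binom k2+\binom \ell2}\bigl[\binom{a-1}{k}\binom{b}{\ell}\bigl(\tfrac12\bigr)^k - \binom{a}{k}\binom{b-1}{\ell}\bigl(\tfrac12\bigr)^\ell\bigr]x^k y^\ell$, the coefficient of the diagonal monomial $x^k y^k$ is proportional to $\binom{a-1}{k}\binom{b}{k}-\binom{a}{k}\binom{b-1}{k}$, which is \emph{non-negative} when $a\ge b$ (it has the same sign as $a-b$). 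So the naive $(k,\ell)\leftrightarrow(\ell,k)$ pairing leaves a positive diagonal contribution that must be absorbed into the neighbouring negative off-diagonal pairs, and you neither perform that computation nor isolate a precise lemma that would. It is not self-evident that this absorption works uniformly in $a,b,q,m$ --- you would need to quantify the off-diagonal surplus as a function of $x/y$, which is exactly the ``fiddly'' part you are conceding. As submitted, the claim $\Sigma_m\le 0$ is unproved, so the argument does not close.

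The paper's discrete move avoids this difficulty structurally, and this is the real reason it is cleaner. In the paper's decomposition by the numbers $(j,\ell-j)$ of vertices drawn from $T\setminus S$ and $T'\setminus S'$, a configuration with $j$ and $\ell-j$ is paired with one with $\ell-j$ and $j$; the edge weights $\bigl(\tfrac12\bigr)^{\binom j2+\binom{\ell-j}{2}}$ of the two are \emph{identical}, and the move rescales them by reciprocal factors $2^{2j-\ell}$ and $2^{\ell-2j}$. The balanced case $j=\ell-j$ is \emph{exactly neutral} (the rescaling factor is $1$), so no diagonal absorption is needed --- one only has to check that the count of the ``winning'' type, $\binom{a-1}{j}x^j\binom{b-1}{\ell-j}y^{\ell-j}$, dominates that of the ``losing'' type, $\binom{a-1}{\ell-j}x^{\ell-j}\binom{b-1}{j}y^j$, which reduces to a clean binomial inequality valid for $a\ge b$, $x\ge y$. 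If you want to salvage your derivative approach you should abandon the $(k,\ell)\leftrightarrow(\ell,k)$ pairing of $\Sigma_m$ and instead mimic the paper's pairing inside the sum, or simply prove the discrete single-vertex-move version as the paper does.
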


\begin{proof}
    Let $x$ denote the size of a cell $S$ in $T$ and $y$ the size of a cell  $S'$ in $T'$. We will show that if $a\geq b$, we can transform our graph until $y \geq x$ without decreasing the number  of $K_q$. 

    Suppose that $x>y$. We transform $R$ by moving a vertex $u$ from $S$ to $S'$, i.e., deleting all edges on $u$ and replacing its neighborhood with that of a vertex in $S'$. We will show that for each $\ell$ with $0\leq\ell\leq q-1$, the number  of $K_{\ell+1}$ spanned by $T$ and $T'$ does not decrease.

    Clearly, the number  of $K_{\ell+1}$ not including $u$ is unchanged. Observe further that, as $x>y$, $u$ is in at least as many edges using two vertices from $S$ and $S'$. Prior to this transformation, $u$ was adjacent to the $y$ vertices of $S'$, and after this transformation, $u$ is adjacent to the $x-1\geq y$ remaining vertices of $S$. So, if one fixes a copy of $K_{\ell-1}$ using no vertices of $S$ or $S'$, the number of $K_{\ell+1}$ containing it does not decrease.

    It then remains to show that the number  of $K_{\ell+1}$ using $u$ and no other vertex from $S$ or $S'$ does not decrease. We will consider copies of $K_\ell$ using no vertex in $S$ or $S'$. Suppose such a copy uses $j$ vertices from $T\setminus S$ and $\ell-j$ vertices from $T'\setminus S'$. If $j=\ell-j$, our transformation does not change the weight of the copy of $K_{\ell+1}$ given by this clique together with $u$. If $j>\ell-j$, the corresponding copy of $K_{\ell+1}$ will at least double in weight, and if $j<\ell-j$, the copy of $K_{\ell+1}$ including it and $u$ will have its weight decrease to a smaller, yet still positive, value. So, if there are at least as many copies of $K_\ell$ using more vertices from $T\setminus S$ than from $T'\setminus S'$, this transformation will at least double the contribution of at least half the copies of $K_\ell$, thus the total number of copies will not decrease.

    Fixing $j>\ell-j$, one can easily see that the number of $K_\ell$ using $j$ vertices from $T\setminus S$ and $\ell-j$ from $T'\setminus S'$ is $\binom{a-1}{j}x^j\binom{b-1}{\ell-j}y^{\ell-j}$. Similarly, one can find the number  of $K_\ell$ using $\ell-j$ vertices from $T\setminus S$ and $j$ from $T'\setminus S'$ is given by $\binom{a-1}{\ell-j}x^{\ell-j}\binom{b-1}{j}y^j$.

Dividing both counts by $x^{\ell-j}y^j$ and using $x \geq y$ gives a  difference of
\begin{align*}
\left(\frac{x}{y}\right)^{2j-\ell} & \left[ \binom{a-1}{j}\binom{b-1}{\ell-j}  - \binom{a-1}{\ell-j}\binom{b-1}{j}\right] 
\geq \binom{a-1}{j}\binom{b-1}{\ell-j}- \binom{a-1}{\ell-j}\binom{b-1}{j} \\ \smallskip
& =  \frac{(a-1)!(b-1)!}{j!(\ell-j)!(a-1-\ell+j)!(b-1-\ell+j)!}\left[
\frac{(a-1-\ell+j)!}{(a-1-j)!} - \frac{(b-1-\ell+j)!}{(b-1-j)!}
\right],
\end{align*}
which is non-negative whenever $a\geq b$.

With this, we may transform $R$ by repeatedly moving vertices of $T$ into cells of $T'$ without decreasing the number of copies of $K_q$ in $R$ until $y\geq x$, i.e., cells in $T'$ are at least as large as cells in $T$. 
\end{proof}

\begin{claim}[Cell Lemma, I]\label{cell-lemma1}
We may suppose that the number of cells $s_i$ in part $i$ satisfies $s_i \leq q$. 
\end{claim}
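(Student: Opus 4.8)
Recall that by Claim~\ref{part-plus-cell} we may assume $s+t=p-1$, and that a profile graph is $p$-skeleton-free precisely when $t+s\le p-1$; hence any modification of $R$ that keeps $t+s$ fixed keeps $R$ $p$-skeleton-free. The starting observation is that in a profile graph the count $\N_q(R)$ decomposes according to how the $q$ clique-vertices are split among the parts: choosing $a_i$ clique-vertices inside part $i$ means choosing $a_i$ of its cells and one vertex from each, so the number of such choices is the $a_i$-th elementary symmetric polynomial $e_{a_i}$ of the sizes of the cells of part $i$, and such a configuration has weight $(1/2)^{\binom{a_i}{2}}$ inside the part and weight $1$ across parts. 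Thus
\[
\N_q(R)=\sum_{a_1+\dots+a_t=q}\ \prod_{i=1}^{t}\Bigl(\tfrac12\Bigr)^{\binom{a_i}{2}} e_{a_i}\bigl(\text{sizes of the cells of part }i\bigr).
\]
By Maclaurin's inequality each $e_{a_i}$, for fixed part-size $m_i$ and fixed number of cells $s_i$, is maximized when the cells of part $i$ are equal; so (consistently with Claim~\ref{relative-cell}) we may always rebalance a part's cells to equal size without decreasing $\N_q(R)$, after which $e_{a_i}=\binom{s_i}{a_i}(m_i/s_i)^{a_i}$. We will repeatedly use the elementary fact that $s\mapsto \binom{s}{a}/s^{a}=\frac1{a!}\prod_{k=0}^{a-1}(1-k/s)$ is increasing in $s$. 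Now suppose some part has more than $q$ cells, and let $i$ be a part with the \emph{most} cells, so $s_i\ge q+1$. We reduce $s_i$ using one of two moves, each preserving $t+s=p-1$.

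\textbf{Move (I): rebalancing cells across parts.} If some part $j$ has $s_j\le s_i-2$, replace $(s_i,s_j)$ by $(s_i-1,s_j+1)$, keeping every part-size $m_k$ fixed (concretely: merge two cells of part $i$ and split a cell of part $j$), then rebalance the cells of parts $i$ and $j$. This leaves $t$ and $s$ unchanged. To see $\N_q(R)$ does not decrease, group the decomposition above by the number $c$ of clique-vertices outside parts $i$ and $j$; writing $\ell=q-c$ and using that each such grouping is multiplied by a nonnegative factor, it suffices to prove for every $\ell\le q$ that
\[
\sum_{a+b=\ell}\left[\frac{\binom{s_i-1}{a}}{(s_i-1)^{a}}\cdot\frac{\binom{s_j+1}{b}}{(s_j+1)^{b}}-\frac{\binom{s_i}{a}}{s_i^{a}}\cdot\frac{\binom{s_j}{b}}{s_j^{b}}\right]m_i^{a}m_j^{b}\Bigl(\tfrac12\Bigr)^{\binom{a}{2}+\binom{b}{2}}\ \ge\ 0 .
\]
Pairing the $(a,b)$ and $(b,a)$ summands and using the monotonicity of $s\mapsto\binom{s}{a}/s^{a}$ together with the inequality $m_i/s_i\le m_j/s_j$ furnished by Claim~\ref{relative-cell} (the part with more cells has the smaller cells), the $a>b$ terms dominate the $a<b$ terms, giving the bound.

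\textbf{Move (II): creating a new part.} If instead every part has cell count within $1$ of $s_i$, then $s=\sum_k s_k\ge t(s_i-1)\ge tq$, so with only $t$ parts we cannot reach all $s_k\le q$, and we must increase $t$: merge two cells of part $i$ (lowering $s_i$ by $1$) and split the resulting part into two parts with as-equal-as-possible numbers of cells. This raises $t$ by $1$ and lowers $s$ by $1$, again keeping $t+s=p-1$. After the decomposition and Maclaurin rebalancing, the assertion that $\N_q(R)$ does not decrease reduces to the statement that replacing a balanced weight-$\tfrac12$ $(k+1)$-partite part on $m$ vertices (with $k=s_i-1\ge q$) by two weight-$\tfrac12$ parts on $m$ vertices joined by weight-$1$ edges, split into $a$ and $k-a$ cells with $a\approx k/2$, does not decrease the clique count; i.e.
\[
\frac{1}{k^{\ell}}\sum_{b+c=\ell}\binom{a}{b}\binom{k-a}{c}\,2^{\binom{\ell}{2}-\binom{b}{2}-\binom{c}{2}}\ \ge\ \frac{\binom{k+1}{\ell}}{(k+1)^{\ell}}\qquad(\ell\le q),
\]
which holds because the cross-edge upgrade factor $2^{\binom{\ell}{2}-\binom{b}{2}-\binom{c}{2}}\ge 1$ is large for balanced $(b,c)$, and the balanced split $a\approx k/2$ pushes enough of the Vandermonde mass $\sum_{b+c=\ell}\binom{a}{b}\binom{k-a}{c}=\binom{k}{\ell}$ onto those terms to overcome the $k^{\ell}$ versus $(k+1)^{\ell}$ discrepancy.

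\textbf{Termination and the main obstacle.} The lexicographic potential $\bigl(-t,\ \sum_k\binom{s_k}{2}\bigr)$ strictly decreases under each move: Move~(I) keeps $t$ and decreases $\sum_k\binom{s_k}{2}$ since $s_i\ge s_j+2$, and Move~(II) increases $t$; moreover $t\le p-1$ and $\sum_k\binom{s_k}{2}\ge 0$. Hence after finitely many moves no part has more than $q$ cells, which is the claim. The main work — and the only delicate point — is verifying that Moves~(I) and~(II) do not decrease $\N_q(R)$; here Move~(I) is the subtle one, and Claim~\ref{relative-cell} enters exactly to exclude the vertex-count distributions that would otherwise make the move harmful.
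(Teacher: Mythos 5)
Your high-level strategy is the same as the paper's: perform local transformations that preserve $s+t$ (so $p$-skeleton-freeness is retained), argue local-to-global via the weight-$1$ edges across parts, and iterate until every part has at most $q$ cells. The decomposition of $\N_q(R)$ into elementary symmetric polynomials is also a valid way to organize the count. However, the key analytic steps are not actually proved, and one of them is in genuine trouble.

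Your Move~(I) verification is the problem. You pair the $(a,b)$ and $(b,a)$ summands and assert that monotonicity of $s\mapsto\binom{s}{a}/s^a$ plus the cell-size inequality $m_i/s_i\le m_j/s_j$ from Claim~\ref{relative-cell} makes the paired sum nonnegative. But the signs do not line up the way you claim: the $(a,b)$ bracket (with $a>b$, applied to the more-celled part $i$) is typically \emph{negative}, and the $(b,a)$ bracket \emph{positive}, so what you actually need is that the positive term outweighs the negative one. That comparison is weighted by $m_i^a m_j^b$ versus $m_i^b m_j^a$, i.e.\ by $(m_j/m_i)^{a-b}$, which requires controlling the \emph{part} sizes $m_i$ versus $m_j$. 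Claim~\ref{relative-cell} controls only cell sizes, not part sizes, and a part with more cells can perfectly well have $m_i>m_j$. So this step is not ``furnished'' by the cited claim; as written the pairing argument has a gap.

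Move~(II) is the one you actually need (Move~(I) can simply be dropped: as long as some $s_i>q\ge 2$, Move~(II) applies, decreases $s$ by $1$, and $s\ge q$ gives termination). But your verification of Move~(II) is only a heuristic: ``the cross-edge upgrade factor is large for balanced $(b,c)$'' and ``pushes enough of the Vandermonde mass'' is intuition, not a proof. What the paper does instead is choose a very unbalanced split, peeling off exactly two cells $S_{k-1},S_k$ of part $i$ into a new part consisting of a single cell $S_{k-1}\cup S_k$, and it does \emph{not} rebalance cell sizes. Then the local $K_\ell$ count decomposes by how many of the $\ell$ vertices land in $S_{k-1}\cup S_k$ (zero, one, or two; two now contributes $0$), and the net change divided by $\binom{k-2}{\ell-1}(1/2)^{\binom{\ell}{2}}$ is
\[
2^\ell - 2 - \frac{\ell-1}{k-\ell} \;\ge\; 2^\ell - \ell - 1 \;>\; 0
\]
for $\ell\ge 2$ and $k>q\ge\ell$. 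That is a two-line check, whereas your balanced split produces an inequality that would require a nontrivial convexity/rearrangement argument to close. I'd recommend reorganizing your proof around the single unbalanced move, which both eliminates the need for Move~(I) and makes the verification immediate.
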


\begin{proof}
Suppose that $s_i>q$. Put $k:=s_i$ and
let $S_1, S_2,  \dots, S_k$ be the cells in part $i$ and suppose they are normalized as $|S_i|=1$ for all $1 \leq i \leq k$.
Then replace part $i$ with two parts, one with with cells $S_1,S_2,\dots, S_{k-2}$, and the other with a single cell of the remaining vertices $S_{k-1} \cup S_{k}$.

Every $K_q$ in $R$ intersects part $i$ in a copy of $K_\ell$ for some $0 \leq \ell \leq q$. For $\ell=0,1$, the number  of $K_q$ is unchanged. We will show that the number  of $K_\ell$ does not decrease locally and thus the number  of $K_q$ does not decrease, globally, in $R$.

The number of  $K_\ell$ using no vertices from $S_{k-1}\cup S_{k}$ is unchanged by this operation. The number of copies  $K_\ell$ using two vertices in $S_{k-1}\cup S_{k}$ drops to zero from $\binom{k-2}{\ell-2}(\frac{1}{2})^{\binom{\ell}{2}}$. The number of $K_\ell$ using exactly one vertex from $S_{k-1}\cup S_{k}$ increases to $2\binom{k-2}{\ell-1}(\frac{1}{2})^{\binom{\ell}{2}-(\ell-1)}$ from $2\binom{k-2}{\ell-1}(\frac{1}{2})^{\binom{\ell}{2}}$. Indeed, the weight of any edge with exactly one vertex in $S_{k-1} \cup S_k$ doubles. In such a copy of $K_\ell$ there are $\ell-1$ of these edges. Thus the net change in the number of copies of $K_\ell$ is given by 
\[
2\binom{k-2}{\ell-1}\left(\frac{1}{2}\right)^{\binom{\ell}{2}-(\ell-1)}-2\binom{k-2}{\ell-1}\left(\frac{1}{2}\right)^{\binom{\ell}{2}}-\binom{k-2}{\ell-2}\left(\frac{1}{2}\right)^{\binom{\ell}{2}}.
\]
Dividing this expression by the (positive) quantity $\binom{k-2}{\ell-1}(\frac{1}{2})^{\binom{\ell}{2}}$ yields 
\[
2^\ell -2 -\frac{\binom{k-2}{\ell-2}}{\binom{k-2}{\ell-1}} = 2^\ell -2 - \frac{\ell-1}{k-\ell} \geq 2^\ell - 2 -(\ell-1) = 2^\ell -\ell -1,
\]
which is greater than zero whenever $\ell$ is at least two.
Therefore, this operation does not decrease the number  of $K_q$ in $R$.
Repeating this argument eventually gives $s_i \leq q$.
\end{proof}

\begin{claim}[Cell Lemma, II]\label{cell-lemma2}
Denote by $k$ the maximum number of cells in a part of $R$. Then either   $k\leq 2$ or
\begin{equation}\label{k-bound}
2^{k-2}\left(\frac{k}{k-1}\right)^{k-1}-k \leq \frac{q-k+1}{s-q},    
\end{equation}
where $s > q$ denotes the total number of cells in $R$.
\end{claim}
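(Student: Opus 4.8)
The plan is to assume $k\ge 3$ (there is nothing to prove otherwise), fix a part $P$ of $R$ having $k$ cells, and compare $R$ against a carefully chosen competitor profile graph. By Claim~\ref{relative-cell} together with the balance reduction already in force, all $k$ cells of $P$ have a common size $x$, and $x$ is the smallest cell size occurring in $R$. Set $R^{\ast}=R-P$; this is a profile graph on $s-k$ cells and $t-1$ parts, all of whose cells have size at least $x$, and since every edge between $P$ and $R^{\ast}$ has weight $1$ we get the identity
\[
\N_q(R)=\sum_{\ell=0}^{k}\binom{k}{\ell}x^{\ell}2^{-\binom{\ell}{2}}\,\N_{q-\ell}(R^{\ast}).
\]

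First I would introduce the competitor $R'$ obtained from $R$ by re-partitioning the $kx$ vertices of $P$ into two new parts: a part $P^{\circ}$ with $k-2$ cells and a part $P^{+}$ with a single cell, all $k-1$ resulting cells having the common size $\tfrac{kx}{k-1}$ (weight $1/2$ inside $P^{\circ}$, weight $1$ between $P^{\circ}$ and $P^{+}$, and weight $1$ from $P^{\circ}\cup P^{+}$ to the rest). Since $R'$ has $t+1$ parts and $s-1$ cells it still satisfies $s'+t'=p-1$, hence is $p$-skeleton-free and yields a valid Bollob\'as-Erd\H os-type construction, so by extremality of $G$ we have $\N_q(R')\le\N_q(R)+O(\gamma r^{q})$. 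Using the identity above for both graphs together with $\N_j(P^{+})=1,\tfrac{kx}{k-1},0,0,\dots$ for $j=0,1,2,\dots$, the difference collapses to
\[
\N_q(R')-\N_q(R)=\sum_{\ell=0}^{k}B_{\ell}\,x^{\ell}2^{-\binom{\ell}{2}}\,\N_{q-\ell}(R^{\ast}),\qquad B_{\ell}=\Big(\tfrac{k}{k-1}\Big)^{\ell}\Big(\tbinom{k-2}{\ell}+2^{\ell-1}\tbinom{k-2}{\ell-1}\Big)-\tbinom{k}{\ell}.
\]
A short computation gives $B_{0}=B_{1}=0$, $B_{k}=-1$, and the crucial value $B_{k-1}=2^{k-2}\big(\tfrac{k}{k-1}\big)^{k-1}-k$; and a routine check (via $\binom{k}{\ell}=\binom{k-2}{\ell}+2\binom{k-2}{\ell-1}+\binom{k-2}{\ell-2}$ and $(\tfrac{k}{k-1})^{\ell}\ge 1$) shows $B_{\ell}\ge 0$ for $2\le \ell\le k-1$.

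Combining $\N_q(R')-\N_q(R)\le O(\gamma r^{q})$ with the nonnegativity of $B_{\ell}$ for $\ell\le k-1$ leaves, after discarding all but the $\ell=k-1$ term on the positive side,
\[
B_{k-1}\,x^{k-1}2^{-\binom{k-1}{2}}\,\N_{q-k+1}(R^{\ast})\le x^{k}2^{-\binom{k}{2}}\,\N_{q-k}(R^{\ast}),
\]
that is, $B_{k-1}\le \dfrac{x}{2^{k-1}}\cdot\dfrac{\N_{q-k}(R^{\ast})}{\N_{q-k+1}(R^{\ast})}$. It then remains to bound this clique-count ratio: a $(q-k)$-clique of $R^{\ast}$ meets $q-k$ of the $s-k$ cells of $R^{\ast}$ and so extends by a vertex of each of the remaining $s-q$ cells, every such cell having at least $x$ vertices; double counting the $(q-k+1)$-cliques then yields a bound of the form $\N_{q-k+1}(R^{\ast})\ge \tfrac{(s-q)x}{2^{k-1}(q-k+1)}\,\N_{q-k}(R^{\ast})$, which substituted above gives exactly \eqref{k-bound}.

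The step I expect to be the crux is this clique-count ratio. The trivial bound on the weight factor picked up by a single extension is only $2^{-(q-k)}$, which is too lossy when $q$ is large relative to $k$; to get the factor down to $2^{-(k-1)}$ one must use the already-imposed structure of $R^{\ast}$ (Claims~\ref{part-plus-cell}, \ref{relative-cell}, \ref{cell-lemma1}), in particular that the extending vertices can be taken in cells of parts essentially disjoint from the clique, so that most of the extension weights are $1$. Apart from this, the argument is binomial-coefficient bookkeeping plus the sign check $B_{\ell}\ge 0$.
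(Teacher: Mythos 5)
Your proof captures the same algebraic core as the paper's argument: the identical transformation of the part $P$ (splitting $k$ cells into a part of $k-2$ cells plus a singleton part, rebalancing to common cell size $\tfrac{kx}{k-1}$), the identical coefficient computation --- your $B_\ell$ matches the paper's intermediate expression $\left[\binom{k-2}{\ell}+2^{\ell-1}\binom{k-2}{\ell-1}\right]\left(\tfrac{k}{k-1}\right)^\ell-\binom{k}{\ell}$ --- and the identical double-count of pairs $(K,v)$, with the same observation that the extension factor is $2^{-(k-1)}$ rather than the lossy $2^{-(q-k)}$ because the part containing $v$ has at most $k-1$ other cells. So the ingredients are all right and in the right places.

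However, there is a real gap in the logical framing, and it is not cosmetic. You derive $\N_q(R')\le \N_q(R)+O(\gamma r^q)$ from the extremality of $G$ (passing through the construction $H_{R'}$), and then silently drop the $O(\gamma r^q)$ term when you extract the displayed inequality $B_{k-1}\,x^{k-1}2^{-\binom{k-1}{2}}\N_{q-k+1}(R^{\ast})\le x^{k}2^{-\binom{k}{2}}\N_{q-k}(R^{\ast})$. That step needs $x^{k-1}\N_{q-k+1}(R^{\ast})=\Omega(r^q)$ to swamp the error, which in turn requires $x=\Omega(r)$ --- but $x$ is the cell size in the part with the \emph{most} cells, i.e.\ the \emph{smallest} cells in $R$ by Claim~\ref{relative-cell}, and nothing forces that part to occupy a positive fraction of $R$. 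When $x$ is small, the error can dominate and your derived inequality gives nothing; you then cannot conclude \eqref{k-bound}. The paper avoids this entirely by reversing the logic: it does not \emph{derive} \eqref{k-bound} from extremality, it \emph{assumes} \eqref{k-bound} fails, shows the transformation then satisfies $\N_q(R')\ge \N_q(R)$ exactly (no $\gamma$), and concludes that after finitely many such transformations the claim can be assumed --- this is what ``we may suppose'' means, and it requires no appeal to $G$ at all. Your $B_\ell$ arithmetic already contains everything needed for that cleaner direction: given the failure of \eqref{k-bound}, one plugs the double-count bound into the $\ell=k-1$ and $\ell=k$ terms and concludes $\sum_\ell B_\ell(\cdots)\ge 0$ directly, with no error terms anywhere. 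Reframe in that direction and the gap disappears.
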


\begin{proof}
Suppose that part $i$ has $k$ cells with $s > q \geq k \geq 3$ and (\ref{k-bound}) does not hold.
Let $S_1,S_2,\dots, S_k$ be the cells in part $i$ and define $x:=|S_i|$ for all $1\leq i \leq k$.

Perform the following transformation of part $i$ by replacing it with two parts, one with cells $S_1,S_2,\dots, S_{k-2}$, and the other with a single cell $S_{k-1} \cup S_k$. Then redistribute vertices so that each of the resulting $k-1$ cells  $S_1,  S_2, \dots,  S_{k-2}$ and  $S_{k-1} \cup S_k$ have cardinality ${kx}/{(k-1)}$.

Every $K_q$ in $R$ intersects part $i$ in a copy of $K_\ell$ for some $0 \leq \ell \leq q$. For $\ell=0,1$, the number  of $K_q$ is unchanged. We will show that the number of $K_\ell$ does not decrease locally and thus the number  of $K_q$ does not decrease, globally, in $R$.

    First let us show that the number of $\ell$-cliques does not decrease for $2 \leq \ell \leq k-2$. The number  of $K_\ell$ in part $i$ is
    \[
    \binom{k}{\ell} x^\ell \left(\frac{1}{2}\right)^{\binom{\ell}{2}}, 
    \]
while the number in the two parts we replace it with is 
\[
\binom{k-2}{\ell}x^\ell  \left(\frac{k}{k-1}\right)^\ell\left(\frac{1}{2}\right)^{\binom{\ell}{2}}+ \binom{k-2}{\ell-1}x^\ell \left(\frac{k}{k-1}\right)^\ell\left(\frac{1}{2}\right)^{\binom{\ell-1}{2}}.
\]
Their difference, after multiplying by  $x^{-\ell}2^{\binom{\ell}{2}}$, is
\begin{align*}
    \left[\binom{k-2}{\ell} + \binom{k-2}{\ell-1}2^{\ell-1}\right] \left(\frac{k}{k-1}\right)^\ell -\binom{k}{\ell}.
\end{align*}
For $\ell=2$ this gives
\[
\left[\binom{k-2}{2} +  2 (k-2)\right]\left(\frac{k}{k-1}\right)^2 - \binom{k}{2}=
 \frac{(k-2)(k+1)k^2}{2(k-1)^2}- \binom{k}{2}\geq 0 
\]
as $k \geq 3$.
For $\ell \geq 3$ we have
\begin{align*}
    \left[\binom{k-2}{\ell} + \binom{k-2}{\ell-1}2^{\ell-1}\right] \left(\frac{k}{k-1}\right)^\ell -\binom{k}{\ell}  
& \geq  \left[\binom{k-1}{\ell}  + \binom{k-2}{\ell-1}\left(2^{\ell-1}-1\right)\right] -\binom{k}{\ell}
\\
& \geq   \binom{k-2}{\ell-1}\left(2^{\ell-1}-1\right) -\binom{k-1}{\ell-1}\ge 0,
\end{align*}
 whenever $\ell \geq 3$.
Therefore, for each $\ell \leq k-2$,  the number of $K_q$ that intersect the vertices of part $i$ in $\ell$ vertices does not decrease.

Now let us count the other copies of $K_q$. For notational convenience, we use $W$ to denote the subgraph of $R$ induced on all vertices not in part $i$, this graph and the edges between it and part $i$ is unchanged by the transformation. As before, $\N_\ell(W)$ denotes the number of $K_\ell$ in $W$. Now, the number of copies of $K_q$ that use $k-1$ or $k$ vertices from part $i$ is
\[
\binom{k}{k-1} x^{k-1} \left(\frac{1}{2}\right)^{\binom{k-1}{2}}  \N_{q-k+1}(W)  +  x^{k}\left(\frac{1}{2}\right)^{\binom{k}{2}}   \N_{q-k}(W).
\]
The number of $K_q$ that use $k-1$ vertices after the transformation (there are none that use $k$ vertices) is
\[
 x^{k-1} 
\left(\frac{k}{k-1}\right)^{k-1} \left(\frac{1}{2}\right)^{\binom{k-2}{2}}\N_{q-k+1}(W). \]
The difference of those cardinalities, after multiplying by $x^{-(k-1)}2^{\binom{k}{2}}$, is
\begin{equation}\label{class-compare}
      \left[2^{2k-3} \left(\frac{k}{k-1}\right)^{k-1} - k 2^{k-1} \right]\N_{q-k+1}(W) - x \cdot \N_{q-k}(W). 
\end{equation}

Now let us double count pairs $(K,v)$, where $K$ is a $(q-k+1)$-clique in $W$ and $v$ a vertex in $K$. First,  one may count such cliques, and then the ways to mark $v$ in that clique. Second, we may count the ways to extend a clique of size $q-k$ by adding vertex $v$. We can fix a clique in one of $\N_{q-k}(W)$ ways. There are $s-k$ cells in $W$, and $q-k$ cells including a vertex from such a clique. So there are $s-q$ cells from which to select $v$, and these cells must be at least as large as those in part $i$ by Claim~\ref{relative-cell}. Each edge between vertices in these cells and those in the $(k-q)$-clique have weight at least $1/2,$ yielding 
\[
(q-k+1)  \N_{q-k+1}(W) \geq  (s-q) x \left(\frac{1}{2}\right)^{k-1}\N_{q-k}(W),
\]
which implies
\[
\left(\frac{1}{s-q}\right) 2^{k-1}(q-k+1)  \N_{q-k+1}(W) \geq  x \cdot  \N_{q-k}(W).
\]
Now, as (\ref{k-bound}) does not hold, we may use the above inequality to conclude that (\ref{class-compare}) is non-negative. So this transformation does not decrease the number of copies of $K_q$ in $R$, and we may repeat it,  until the claim holds.    
\end{proof}

Note that inequality (\ref{k-bound}) in Claim~\ref{cell-lemma2} does not hold when $k=q$ and $s>q\geq 3$ which together with Claim~\ref{cell-lemma1} implies the following useful statement.

\begin{cor}\label{cell-cor}
    If the number of cells $s$ in $R$ satisfies $s>q \geq 3$, then no part contains $q$ cells.
\end{cor}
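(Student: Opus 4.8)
The plan is to argue by contradiction, combining Claims~\ref{cell-lemma1} and~\ref{cell-lemma2}. Assume $s>q\geq 3$ and suppose, after applying the reductions of the earlier claims, that some part of $R$ still contains $q$ cells. By Claim~\ref{cell-lemma1} every part has at most $q$ cells, so the maximum number of cells in a part is exactly $k=q$. Since $q\geq 3$ we have $k=q\geq 3>2$, so Claim~\ref{cell-lemma2} forces inequality~(\ref{k-bound}) to hold with $k=q$, namely
\[
2^{q-2}\left(\frac{q}{q-1}\right)^{q-1}-q \;\leq\; \frac{q-q+1}{s-q} \;=\; \frac{1}{s-q}.
\]
As $s$ and $q$ are integers with $s>q$, we have $s-q\geq 1$, so the right-hand side is at most $1$; hence it would follow that $2^{q-2}\left(\frac{q}{q-1}\right)^{q-1}\leq q+1$.

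The remaining step is the elementary verification that this last inequality is false for every $q\geq 3$. I would use that $\left(\frac{q}{q-1}\right)^{q-1}=\left(1+\frac{1}{q-1}\right)^{q-1}$ is strictly increasing in $q$ and equals $2$ at $q=2$, hence is strictly greater than $2$ for all $q\geq 3$; therefore
\[
2^{q-2}\left(\frac{q}{q-1}\right)^{q-1} \;>\; 2^{q-1} \;\geq\; q+1,
\]
where the last bound $2^{q-1}\geq q+1$ holds for all $q\geq 3$ (one checks the base case $q=3$, where both sides equal $4$, and the strict inequality $\left(\frac{q}{q-1}\right)^{q-1}>2$ upgrades this to the strict conclusion). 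This contradicts the displayed consequence of Claim~\ref{cell-lemma2}, so no part of $R$ can contain $q$ cells.

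I do not expect any real obstacle in this corollary: the hint already pins down the mechanism, and the only care required is the short numerical check that $2^{q-2}\left(\frac{q}{q-1}\right)^{q-1}>q+1$ for $q\geq 3$ together with the observation that $1/(s-q)\leq 1$ whenever $s>q$ are integers.
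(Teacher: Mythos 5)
Your argument is correct and matches the paper's: the paper disposes of this corollary with a one-line remark that inequality~(\ref{k-bound}) fails when $k=q$ and $s>q\geq 3$, and you supply exactly the bookkeeping behind that remark, combining Claim~\ref{cell-lemma1} (to pin down $k=q$) with Claim~\ref{cell-lemma2} and the elementary estimate $2^{q-2}\bigl(\tfrac{q}{q-1}\bigr)^{q-1}-q>1\geq \tfrac{1}{s-q}$. The chain $2^{q-2}\bigl(\tfrac{q}{q-1}\bigr)^{q-1}>2^{q-1}\geq q+1$, with strictness coming from $(1+\tfrac{1}{q-1})^{q-1}>2$ for $q\geq 3$, is exactly the needed check.
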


We conclude this subsection with a lemma that estimates the change in number of small cliques when transforming two parts of two cells each into three parts of one cell each. This lemma will be used repeatedly in the proofs in the following subsections.

\begin{lemma}\label{repartition-two-two}
Suppose $R$ contains two parts each containing exactly two cells. If each of these cells has cardinality $3x$, then there is a transformation of these two parts into three parts, each containing a single cell, such that the set of transformed vertices span $3x^2$ more edges, $10x^3$ more triangles and $\frac{81}{4}x^4$ fewer copies of $K_4$.
\end{lemma}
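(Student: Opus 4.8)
The plan is to make the transformation completely explicit and then verify the three counts by direct enumeration. Label the first part $P$ with cells $A_1,A_2$ and the second part $Q$ with cells $B_1,B_2$, so that each of $A_1,A_2,B_1,B_2$ has $3x$ vertices; recall that in a profile graph an edge inside a part (between two of its cells) has weight $1/2$, an edge between two distinct parts has weight $1$, and two vertices of the same cell are non-adjacent. The transformation I would use discards all edges among the $12x$ vertices of $P\cup Q$ and re-partitions them into three new parts $C_1,C_2,C_3$, each a single cell of size $4x$, with all edges between distinct $C_i$'s of weight $1$. Before counting, I would note that every one of these $12x$ vertices lies in a part both before and after the move, hence is joined with weight $1$ to every vertex of $R$ outside $P\cup Q$; so the move changes nothing outside $P\cup Q$, and it suffices to compare the cliques contained in $P\cup Q$ with the cliques contained in $C_1\cup C_2\cup C_3$.

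For the edge count: inside $P\cup Q$ the total is $\tfrac12(9x^2)+\tfrac12(9x^2)+1\cdot(36x^2)=45x^2$ (the two within-part cell-pairs at weight $1/2$, plus the $6x\times 6x$ between-part pairs at weight $1$), while inside $C_1\cup C_2\cup C_3$ it is $3(4x)^2=48x^2$, a difference of $3x^2$. For the triangle count: a triangle inside $P\cup Q$ must occupy three of the four cells $A_1,A_2,B_1,B_2$ (two vertices in one cell give no edge), and each of the four such cell-triples yields $(3x)^3$ triangles of weight $\tfrac12\cdot1\cdot1=\tfrac12$, for a total of $54x^3$; inside $C_1\cup C_2\cup C_3$ a triangle takes one vertex from each part, giving $(4x)^3=64x^3$ triangles of weight $1$, a difference of $10x^3$. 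For the $K_4$ count: a $K_4$ inside $P\cup Q$ must use all four cells, one vertex from each, so there are exactly $(3x)^4$ of them, each of weight $(\tfrac12)^2\cdot1^4=\tfrac14$, for a total of $\tfrac{81}{4}x^4$; a $K_4$ inside $C_1\cup C_2\cup C_3$ is impossible, since it would require four pairwise-adjacent vertices spread over only three independent cells, so there are none, and the difference is $-\tfrac{81}{4}x^4$.

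There is no genuine obstacle here beyond careful bookkeeping; the two points that need attention are (i) confirming that the enumeration of clique types within each configuration is complete, which is immediate because cells are independent sets so a clique meets each cell in at most one vertex, and (ii) confirming that the external adjacencies of the $12x$ moved vertices are unchanged, so that the stated local changes in $\N_2,\N_3,\N_4$ are exactly the global changes these vertices contribute. Once these are in place, the three displayed equalities follow by the elementary counts above.
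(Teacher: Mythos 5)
Your proof is correct and matches the paper's approach exactly: you perform the same transformation (merging the two two-cell parts of size $3x$ into three single-cell parts of size $4x$) and verify the three counts $45x^2\to 48x^2$, $54x^3\to 64x^3$, $\tfrac{81}{4}x^4\to 0$ by the same direct enumeration that the paper characterizes as "a straightforward computation." The only difference is that you spell out the bookkeeping explicitly, including the helpful (though not strictly required) observation that the moved vertices retain weight-$1$ adjacency to the rest of $R$.
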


\begin{proof}
Transform $R$ by replacing the two parts of two cells with three parts each containing a single cell of cardinality $4x$. The resulting graph has one more part and one less cell, so it remains $p$-skeleton-free.
 From here it is a  straightforward computation to confirm that the number of edges increases from  $45x^2$ to $48x^2$, the number of triangles increases from $54x^3$ to $64x^3$, and the number of $K_4$ decreases from $\frac{81}{4}x^4$ to $0$.   
\end{proof}

\subsection{Key Lemma}

In this subsection we connect $\N_q(G)$ to $\N_q(R)$ for a corresponding profile graph $R$.

\begin{lemma}[Key Lemma]\label{key-lemma}
    If $R$ has $s$ cells and the $t$ parts of $R$ each contain $\lceil s/t \rceil$ 
    or $\lfloor s/t \rfloor$ cells, then 
\[
\N_q(G) = (1+o(1))\max_x \{\N_q(R')\},
\]
where $x$ is the number of vertices in a part of order $\lceil s/t \rceil$ of graph $R'$ with the same profile as $R$. Alternatively,
\[
\N_q(G) = (1+o(1)) \max \left \{ \N_q(H) \mid H \in \mathcal{G}\left(n;s,t\right)\right \}.
\]
\end{lemma}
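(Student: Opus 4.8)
The plan is to sandwich $\N_q(G)$ between an upper bound read off from the reduced graph $R$ and a lower bound coming from the extremality of $G$, both of which meet the common value $\max_x\{\N_q(R')\}$.

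\emph{Upper bound.} Recall from inequality~\eqref{upper-bound-ineq} that $\N_q(G)\le \N_q(R)(n/r)^q+3\gamma n^q$, and that by Theorem~\ref{w-zykov} together with Claims~\ref{part-plus-cell} and~\ref{relative-cell} we may assume $R$ is a profile graph whose $t$ parts each contain $\lceil s/t\rceil$ or $\lfloor s/t\rfloor$ cells, with cells within a part of equal size and parts with the same number of cells of equal size. First I would replace each vertex of $R$ by $n/r$ copies, preserving adjacencies and weights, obtaining an $n$-vertex profile graph $\hat R$ of the same profile. Since $\N_q$ of a profile graph depends only on its cell sizes and no positive-weight $q$-clique uses two vertices of a single cell, $\N_q(\hat R)=(n/r)^q\N_q(R)$, and $\hat R$ is exactly a graph of the form $R'$ for some value of $x$. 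Hence $\N_q(R)(n/r)^q=\N_q(\hat R)\le \max_x\{\N_q(R')\}$, giving $\N_q(G)\le \max_x\{\N_q(R')\}+3\gamma n^q$; since $\gamma$ may be taken arbitrarily small (it is the outermost of the nested constants) while $\max_x\{\N_q(R')\}=\Theta(n^q)$, this is $\N_q(G)\le (1+o(1))\max_x\{\N_q(R')\}$.

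\emph{Lower bound.} Let $x^\star$ attain $\max_x\{\N_q(R')\}$, let $R^\star$ be the corresponding $n$-vertex profile graph, and build $H\in\mathcal{G}(n;s,t)$ with exactly the part- and cell-sizes of $R^\star$ by embedding a $\lceil s/t\rceil$- or $\lfloor s/t\rfloor$-Bollob\'as--Erd\H os graph into each part. By the clique-counting estimate for $\ell$-Bollob\'as--Erd\H os graphs recorded in Section~\ref{construct}---after fixing a partial clique, a $(\tfrac12-o(1))$-fraction of each relevant cell remains available---each weight-$\tfrac12$ edge of $R^\star$ contributes a factor $\tfrac12-o(1)$ and each weight-$1$ edge a complete bipartite graph, so $\N_q(H)=(1-o(1))\N_q(R^\star)=(1-o(1))\max_x\{\N_q(R')\}$. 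Moreover $H$ has largest clique of size $s+t=p-1<p$ by Claim~\ref{part-plus-cell}, hence is $K_p$-free, and has independence number $o(n)\le \alpha n$ for large $n$, being assembled from a constant number of Bollob\'as--Erd\H os graphs. As $G$ is extremal for $\rt(n,\#K_q,K_p,\alpha n)$, we get $\N_q(G)\ge \N_q(H)=(1-o(1))\max_x\{\N_q(R')\}$; combined with the upper bound this proves the first displayed equality. The $\mathcal{G}(n;s,t)$ reformulation is then immediate: extremality of $G$ gives $\N_q(H')\le \N_q(G)=(1+o(1))\max_x\{\N_q(R')\}$ for every $H'\in\mathcal{G}(n;s,t)$, while the $H$ just constructed attains $(1-o(1))\max_x\{\N_q(R')\}$.

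The step I expect to be the main obstacle is the lower bound, specifically checking that the Bollob\'as--Erd\H os clique count survives the $t$-partite gluing: one must verify that conditioning on a partial clique spread over several parts and cells still leaves the expected $(\tfrac12-o(1))$-proportion of transversals through each further cell. This is precisely the spherical-cap estimate underlying the $s$-Bollob\'as--Erd\H os graph in Section~\ref{construct}, so it amounts to invoking that computation carefully rather than proving something new. Everything else---the uniform blow-up of $R$, the bookkeeping of the $\gamma$-error, and the one-variable optimization defining $\max_x$---is routine.
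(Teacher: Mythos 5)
Your proof is correct and follows essentially the same route as the paper's: both arguments pair the reduced-graph inequality $\N_q(G)\le \N_q(R)(n/r)^q+3\gamma n^q$ with the extremality of $G$ against a constructed $H\in\mathcal{G}(n;s,t)$, using the Bollob\'as--Erd\H os clique-count to show $\N_q(H)=(1\pm o(1))(n/r)^q\N_q(R)$. Your presentation makes the sandwich slightly more explicit by interposing the $n/r$-blow-up $\hat R$ for the upper bound and building $H$ directly from the optimizer $R^\star$ rather than from $R$ itself (the paper builds $H_R$ from $R$ and then remarks the construction applies to any $R'$ of the same profile), but these are cosmetic reorganizations of the same argument; the concern you flag about conditioning on partial cliques in the gluing is handled exactly by the spherical-cap estimate cited in Section~\ref{construct}, since edges between distinct parts are complete and so impose no constraint.
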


\begin{proof}
Let $(s_1,s_2,\dots, s_t)$ be the profile of $R$. 
Construct $H_R\in \mathcal{G}(n;s,t)$ with parts $V_1,V_2,\ldots,V_t$ such that:
    \begin{itemize}
        \item  $\displaystyle \frac{|V_i|}{n}=\frac{x_i}{r}$, where $x_i$ is the number of vertices in part $i$ of $R$.
        \item $V_i$ spans an $s_i$-Bollob\'as-Erd\H os graph.
    \end{itemize}

By construction (see Section~\ref{construct}), $H_R$ has independence number $\alpha n$ (as $n$ is large enough). Moreover, it can be seen that since $R$ is $p$-skeleton-free, $H_R$ will be $K_p$-free. Therefore, $\N_q(H_R) \leq \N_q(G)$.

Observe that each cell of $H_R$ is paired naturally with a corresponding cell in $R$ and is larger by a factor of ${n}/{r}$. Therefore, each set of $q$ cells in $H$ contains $(1+o(1))\left(\frac{n}{r}\right)^q$ times as many copies of $K_q$ as the corresponding set of $q$ cells in $R$. Summing over all $\binom{s}{q}$ sets of $q$ cells shows $\N_q(H_R)=(1+o(1))\left(\frac{n}{r}\right)^q\N_q(R)$. Thus, as $n$ is large enough, 
\[
\left|\N_q(H_R) - \N_q(R)\left(\frac{n}{r}\right)^q\right|< \varepsilon n^q.
\]
 Together with (\ref{upper-bound-ineq}) this gives
 \begin{align*}
\N_q(R)\left(\frac{n}{r} \right)^q - \varepsilon n^q \leq  \N_q(H_R) \leq \N_q(G) \leq \N_q(R) \left(\frac{n}{r}\right)^q + 3 \gamma n^{q}  \leq \N_q(H_R) + 4\gamma n^q.
 \end{align*}

Observe that, for any $R'$ with the same profile as $R$, we can construct a corresponding $H_{R'}$. So, for an $R'$ maximizing $\N_q(R')$, the preceding inequalities hold. Thus in light of Claims~\ref{part-plus-cell} and \ref{relative-cell}, the maximization of $\N_q(R')$ can be written as univariate optimization in $x$, the number of vertices in a part of size $\lceil s/t \rceil$.

To prove the alternative formulation, we can proceed in symmetric fashion, yielding $R'$ for any $H\in \mathcal{G}(n;s,t)$ with $\left|\N_q(R')\left(\frac{n}{r}\right)^q - \N_q(H)\right|< \varepsilon n^q$.
\end{proof}

The consequence of Lemma~\ref{key-lemma} is that to compute $\mathfrak{R}_q(p)$, we simply need to determine $\max_x \{\N_q(R')\}$.
 Moreover, if $t \vert s$, each part contains the same number of cells, so there is no optimization by Claim~\ref{relative-cell} and $\N_q(R)$ can be computed directly.

Whenever we can prove that the parts of $R$ satisfy $|s_i-s_j|\leq 1$ for all $i,j$, we may apply Lemma~\ref{key-lemma}. In the following subsections, the most frequent resulting profiles will be addressed by the next two lemmas.

\begin{lemma}\label{p-large}
If $s=t+1$, then $p=2s$ and
\[
\mathfrak{R}_q(2s) =  \max_{0 \leq x \leq 1}
       \frac{1}{2}\left(\frac{x}{2}\right)^2\binom{s-2}{q-2}\left(\frac{1-x}{s-2}\right)^{q-2}+x\binom{s-2}{q-1}\left(\frac{1-x}{s-2}\right)^{q-1} + \binom{s-2}{q}\left(\frac{1-x}{s-2}\right)^q.   
\]
    If $s=t$, then $p=2s+1$ and
    \[
\mathfrak{R}_q(2s+1) = \binom{s}{q} \left(\frac{1}{s}\right)^q. 
\]
Alternatively, if $s-t\leq 1$, then
\[
\N_q(G) = (1+o(1)) \max \left \{ \N_q(H) \mid H \in \mathcal{G}\left(n;\left \lceil \frac{p-1}{2}\right \rceil,\left \lfloor \frac{p-1}{2} \right \rfloor\right)\right \}.
\]
\end{lemma}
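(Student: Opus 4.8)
The plan is to deduce both stated equalities, and then the ``alternatively'' form, directly from the Key Lemma (Lemma~\ref{key-lemma}) after pinning down the profile of $R$. Recall that, by the Weighted Zykov Theorem (Theorem~\ref{w-zykov}) together with Claim~\ref{part-plus-cell}, we may assume $R$ is a profile graph with $t$ parts, $s$ cells, $s+t=p-1$, and with the cells inside any one part of equal size. The hypothesis $s-t\le 1$ forces exactly two possibilities: either $s=t$, in which case $p=2s+1$ is odd, or $s=t+1$, in which case $p=2t+2=2s$ is even. In both cases $\lceil(p-1)/2\rceil=s$ and $\lfloor(p-1)/2\rfloor=t$, so $\mathcal{G}\bigl(n;\lceil(p-1)/2\rceil,\lfloor(p-1)/2\rfloor\bigr)=\mathcal{G}(n;s,t)$; hence once the first two displays are proved, the ``alternatively'' statement is exactly the alternative formulation of Lemma~\ref{key-lemma}, and conversely the first two displays are the explicit evaluation of $\max_x\{\N_q(R')\}$ supplied by that lemma.

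First I would handle $s=t$. Since $\sum_{i=1}^t s_i=s=t$ and each $s_i\ge 1$, every part is a single cell, so $R$ is a complete $t$-partite weighted graph all of whose edges have weight $1$; by Claim~\ref{relative-cell} (parts with equally many cells may be taken to have equal cell size) it is balanced, i.e.\ $R=T(r,s)$, so $\N_q(R)=\binom{s}{q}(r/s)^q$. Each part has $1=\lceil s/t\rceil=\lfloor s/t\rfloor$ cell, so Lemma~\ref{key-lemma} applies and yields $\N_q(G)=(1+o(1))\binom{s}{q}(n/s)^q$, i.e.\ $\mathfrak{R}_q(2s+1)=\binom{s}{q}(1/s)^q$.

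Next, for $s=t+1$, the identity $\sum_{i=1}^t s_i=t+1$ with each $s_i\ge 1$ forces exactly one part, say $T_0$, to have two cells and each of the other $s-2$ parts to have a single cell; since the per-part cell counts are $1$ and $2$, which are $\lfloor s/t\rfloor$ and $\lceil s/t\rceil$, Lemma~\ref{key-lemma} again applies and reduces the problem to maximizing $\N_q(R')$ over profile graphs $R'$ of this profile. By Claims~\ref{part-plus-cell} and~\ref{relative-cell} the single free parameter is $x$, the normalized number of vertices in $T_0$: the two cells of $T_0$ then have $x/2$ vertices each and the $s-2$ single-cell parts have $(1-x)/(s-2)$ vertices each. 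Counting copies of $K_q$ in $R'$ according to whether $0$, $1$, or $2$ of their vertices lie in $T_0$ --- all participating edges having weight $1$ except, in the last case, the single weight-$\tfrac12$ edge joining the two cells of $T_0$ --- produces exactly the three summands
\[
\binom{s-2}{q}\left(\frac{1-x}{s-2}\right)^{q},\qquad
x\binom{s-2}{q-1}\left(\frac{1-x}{s-2}\right)^{q-1},\qquad
\frac12\left(\frac{x}{2}\right)^{2}\binom{s-2}{q-2}\left(\frac{1-x}{s-2}\right)^{q-2},
\]
and maximizing their sum over $x\in[0,1]$ gives the asserted value of $\mathfrak{R}_q(2s)$.

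I do not expect a genuine obstacle here; the lemma is essentially the evaluation of Lemma~\ref{key-lemma} at the two smallest profiles allowed by $s-t\le 1$. The only care needed is bookkeeping: checking that Claims~\ref{part-plus-cell} and~\ref{relative-cell} really do collapse the optimization to the single variable $x$ (all single-cell parts have equal size, being parts with equally many cells), verifying the degenerate instances ($t=1$, i.e.\ $p=4$, where $T_0$ is all of $R$; and $s<q$, where every binomial coefficient and hence both sides vanish), and confirming the arithmetic $\lceil(p-1)/2\rceil=s$, $\lfloor(p-1)/2\rfloor=t$ that identifies the $\mathcal{G}(n;\cdot,\cdot)$ statement with the two explicit formulas.
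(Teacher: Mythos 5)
Your proof is correct and follows the paper's own argument essentially verbatim: apply Claim~\ref{part-plus-cell} to fix $p=2s$ or $p=2s+1$, read off the forced profile (one part with two cells, or all parts with one cell), count $K_q$ by intersection with the exceptional part, and finish by Lemma~\ref{key-lemma}. The only difference is that you spell out the ``straightforward'' counting and the $\lceil(p-1)/2\rceil=s$, $\lfloor(p-1)/2\rfloor=t$ identification, which the paper leaves implicit.
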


\begin{proof}
If $s=t+1$, Claim~\ref{part-plus-cell} gives $s+t=p-1$, so $p=2s$. Moreover, one part of $R$ contains exactly $2$ cells and the remaining parts each contain a single cell. Let $x$ be the number of vertices in the part with two cells. It is straightforward to count the number of $K_q$ copies in $R$ as a function of $x$. Applying Lemma~\ref{key-lemma} completes this case.

 If $s=t$,  Claim~\ref{part-plus-cell} gives $s+t=p-1$, so $p=2s+1$. Moreover, each 
part of $R$ contains exactly $1$ cell. From here it is straightforward to count the number of $K_q$ copies in a maximal $R$. Applying Lemma~\ref{key-lemma} completes this case.
\end{proof}

The second frequently applied case is when $s=q$ and we use the following lemma.

\begin{lemma}\label{Bisq}
If $s=q$, then 
\[
\mathfrak{R}_q(p) = \left( \frac{1}{q} \right)^q \left(\frac12\right)^{\binom{q}{2} - e(T(q,p-q-1))},
\]
where $T(q,p-q-1)$ is the Tur\'an graph on $q$ vertices with $p-q-1$ classes.
Alternatively, if $s=q$, then
\[
\N_q(G) = (1+o(1)) \max \left \{ \N_q(H) \mid H \in \mathcal{G}(n;q,p-q-1) \right \}.
\] 
\end{lemma}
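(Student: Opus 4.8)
The plan is to exploit the rigidity of a profile graph that has exactly $q$ cells. Since $R$ is a profile graph with $s=q$ cells, Claim~\ref{part-plus-cell} forces the number of parts to be $t=p-q-1$, and since each part contains at least one cell we also get $t\le q$, i.e.\ $p\le 2q+1$, so the statement is only non-vacuous in that range. Because cells are maximal independent sets and a $K_q$ needs $q$ distinct vertices, every copy of $K_q$ in $R$ is a transversal of the $q$ cells, and conversely every transversal is a copy of $K_q$: two cells in the same part are joined by all edges of weight $1/2$, two cells in distinct parts by all edges of weight $1$. Hence, if part $i$ contains $s_i$ cells, every transversal has weight $\left(\tfrac12\right)^{\sum_i\binom{s_i}{2}}$, and writing $y_i$ for the common size (relative to $n$, normalized so $\sum_i s_i y_i=1$) of a cell in part $i$,
\[
\N_q(R)=\left(\tfrac12\right)^{\sum_i\binom{s_i}{2}}\prod_i y_i^{\,s_i}.
\]

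Next I would carry out the maximization in two independent steps. For a fixed profile $(s_1,\dots,s_t)$, weighted AM--GM applied with weights $s_i/q$ (which sum to $1$ since $\sum_i s_i=q$) and values $y_i$ gives $\prod_i y_i^{\,s_i}\le (1/q)^q$, with equality exactly when every cell has size $1/q$; this bound does not depend on the profile. It then remains to minimize $\sum_i\binom{s_i}{2}$ over compositions $q=s_1+\dots+s_t$ with each $s_i\ge 1$. Since $\binom{\cdot}{2}$ is convex, the minimum is attained at the balanced composition $s_i\in\{\lfloor q/t\rfloor,\lceil q/t\rceil\}$, for which $\sum_i\binom{s_i}{2}$ equals the number of monochromatic pairs in a balanced $t$-colouring of a $q$-element set, namely $\binom{q}{2}-e(T(q,t))$ with $t=p-q-1$. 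This balanced profile respects all structural constraints established so far (in particular $s_i\le\lceil q/t\rceil\le q$, so Claim~\ref{cell-lemma1} holds, while Claim~\ref{cell-lemma2} is vacuous when $s=q$), and any profile graph with $s+t=p-1$ is automatically $p$-skeleton-free since a skeleton $(X,Y)$ uses at most one vertex per part in $X$ and at most one vertex per cell in $Y$, so $|X|+|Y|\le t+s=p-1<p$. Therefore, among $p$-skeleton-free profile graphs with $q$ cells maximizing $\N_q$, we may take $R$ to have the balanced profile with all cells equal, whence
\[
\N_q(R)=\left(\tfrac1q\right)^q\left(\tfrac12\right)^{\binom{q}{2}-e(T(q,p-q-1))}.
\]

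Finally I would invoke the Key Lemma (Lemma~\ref{key-lemma}): this $R$ has $t$ parts each with $\lceil q/t\rceil$ or $\lfloor q/t\rfloor$ cells, so the lemma gives $\N_q(G)=(1+o(1))\max\{\N_q(H):H\in\mathcal G(n;q,p-q-1)\}$, which is the alternative formulation; combining this with the displayed value of $\N_q(R)$ and the definition $\mathfrak R_q(p)=\lim_{\alpha\to0}\lim_{n\to\infty}\N_q(G)/n^q$ yields the claimed formula for $\mathfrak R_q(p)$. I do not expect a genuine obstacle here: the entire content lies in the two-step optimization and in recognizing the Tur\'an identity $\min\sum_i\binom{s_i}{2}=\binom{q}{2}-e(T(q,p-q-1))$. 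The only points that need a line of care are verifying that the balanced profile is admissible (so the Key Lemma applies) and that replacing the optimizing $R$ by the balanced one does not decrease $\N_q(R)$ — both of which follow from the computations above.
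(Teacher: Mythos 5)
Your proposal is correct and follows essentially the same route as the paper: with $s=q$ every copy of $K_q$ is a transversal of the cells and has identical weight $(1/2)^{\sum_i\binom{s_i}{2}}$, so one balances cell sizes (the paper states this directly, you supply the weighted AM--GM) and minimizes the exponent via Tur\'an's theorem, then applies the Key Lemma. You fill in a few details the paper leaves implicit — the explicit AM--GM step, the convexity argument for balancing the profile, and the verification that the balanced profile is $p$-skeleton-free — but the structure and key identities are the same.
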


\begin{proof}
    Claim~\ref{part-plus-cell} gives $s+t = p-1$, so $t = p-q-1$.
As there are exactly $q$ cells in $R$, each copy of $K_q$ has exactly the same weight. Therefore, to maximize the number of $K_q$ copies, all $q$ cells should be of equal size. Each clique has weight $\left(\frac{1}{2}\right)^{\binom{s_1}{2}+\binom{s_2}{2}+\ldots+\binom{s_t}{2}}$. This weight is maximized when the exponent is minimized. Translating to the problem of minimizing non-edges in a $(p-q-1)$-partite graph on $q$ vertices gives weight $\left(\frac12\right)^{\binom{q}{2} - e(T(q,p-q-1))}$ for cliques in maximal $R$.
Applying Lemma~\ref{key-lemma} completes the proof.
\end{proof}

There remain several cases ($q=5$ and $p=10,11$) in the following subsections that do not use Lemma~\ref{p-large} or~\ref{Bisq} but involve similar optimizations. We will leave the details of these straightforward maximizations to the reader.

\subsection{\texorpdfstring{The cases $q=2,3$}{text}}

 Now we can give a new proof of Theorems~\ref{edge-thm} and~\ref{triangle-thm} (i.e., the cases $q=2,3$). 
 This will give a general outline for how the method will proceed for $q=4,5$.
 
 Suppose $2 \leq q \leq 3$ and $p \geq 2q$. 
 First, suppose that $s-t \geq 2$. Then $2q-1\leq p-1=s+t\leq 2s-2$ implies $s>q$. Then by Claims~\ref{cell-lemma1} and~\ref{cell-lemma2}  we can transform $R$ such that each part of $R$ contains at most two cells without decreasing $\N_q(R)$. 
Suppose that there are two parts $i$ and $j$ of $R$ that  each contains two cells.
By Lemma~\ref{repartition-two-two}, there is a transformation that locally increases the number of edges and the number of triangles while leaving the number of vertices unchanged.

Now, every clique on at most three vertices in $R$ uses at most three vertices among those we have transformed, call these vertices $X$. Each vertex not in $X$ is connected by an edge of weight $1$ to a vertex of $X$. It follows that if we increase edges and triangles locally, without losing vertices, the global number of edges and triangles must also increase.
 We may repeatedly apply such a transformation on pairs of parts containing two cells without reducing $\N_q(R)$. So we may assume at most one part in $R$ contains two cells, i.e., $s - t \leq 1$.
 Invoking Lemma~\ref{p-large} and evaluating the four cases of $q=2$ or $q=3$, $p=2s+1$ or $p=2s$ yields the two theorems with the maximum achieved by a graph in $\mathcal{G}(n;\lceil (p-1)/2 \rceil,\lfloor (p-1)/2 \rfloor)$. We remark that for $q=2$, $p=2s\geq 4$, the maximum value of $\frac{3s-5}{6s-4}$ can be explicitly computed and is achieved at $x=\frac{4}{3(s-1)}$
 in the optimization of $\mathfrak{R}_q(2s)$.

\subsection{\texorpdfstring{The case $q=4$}{text}}

For $6 \leq p \leq 7$, the theorem follows from part (a) of Theorem~\ref{q-plus}. 
Suppose that $s - t \geq 2$. Then $7 \leq p-1 =s+t \leq 2s-2$ implies $s \geq 5$.
By Corollary~\ref{cell-cor} we may assume that each part of $R$ contains at most three cells. Suppose that 
some part $i$ contains three cells, each of cardinality $2x$.
We transform part $i$ by partitioning it into two parts each of a single cell of size $3x$.
The number of parts increases by $1$ and the number of cells decreases by $1$, so $R$ remains $p$-skeleton-free.
Let $W$ be the subgraph of $R$ induced on all vertices not in part $i$, the graph $W$ and the edges between it and part $i$ are unchanged by the transformation.
        As $s \geq 5$, there are at least two cells in $W$ and by Claim~\ref{relative-cell}, each is of size at least $2x$, the size of a cell in part $i$. Therefore, $|V(W)| \geq 4x$ and the total edge weight in $W$ satisfies $\N_2(W) \geq \frac{1}{2}(2x)(|V(W)|-2x)$. 
    Thus, the change in the number  of $K_4$ is
    \begin{align*}
    &\left((3x)^2-3\frac{1}{2}(2x)^2\right)\N_2(W) -\frac{1}{8}(2x)^3|V(W)|  = 3x^2\N_2(W) - x^3|V(W)| \\
    & \geq 2x^3|V(W)| - 6x^4 
 \geq 8x^4 - 6x^4 
     > 0.
        \end{align*}
        
        Therefore, this transformation does not decrease $\N_4(R)$ and so  we may assume that
        each part of $R$ contains at most two cells.
        Suppose that $R$ has two parts that contain two cells. By Claim~\ref{relative-cell} we may suppose that each of these four cells has cardinality $3x$.
By Lemma~\ref{repartition-two-two}, there is a transformation that locally increases the number of edges by $3x^2$, and the number of triangles by $10x^3$ and decreases the number of $K_4$ copies by $\frac{81}{4}x^4$, while leaving the number of vertices unchanged.         
       Again, as $s \geq 5$, there is at least one cell in $W$. Therefore, $|V(W)| \geq 3x$.
The number  of $K_4$ that use at most one vertex from part $i$ remains unchanged. The number of $K_4$ copies that use at least two vertices in $i$ increases by
\[
3x^2 \N_2(W) + 10x^3 |V(W)| - \frac{81}{4}x^4 \geq 30x^4 - \frac{81}{4}x^4  >0.
\]
Therefore, this transformation does not decrease $\N_4(R)$ and so we may assume that
        there is at most one part with two cells, i.e., $s-t \leq 1$. Invoking Lemma~\ref{p-large} and evaluating the two cases $p=2s+1$ or $p=2s$ yields the theorem
with the maximum achieved by a graph in $\mathcal{G}(n;\lceil (p-1)/2 \rceil,\lfloor (p-1)/2 \rfloor)$.

\subsection{\texorpdfstring{The case $q=5$}{text}}

The cases $7 \leq p \leq 8$ follow from part (a) of Theorem~\ref{q-plus}, so let $p \geq 9$. 
We may assume that each part contains at most three cells: if $s=5$, then $t=p-q-1\geq 3$, and it is easy to see that no part contains more than $s-t+1\leq 3$ cells. If $s>5$, then Claims~\ref{cell-lemma1} and~\ref{cell-lemma2} imply that each part of contains at most three cells.

The next lemma also holds when counting copies of $K_q$ in general, but we will use it only for $q=5$  here.

\begin{lemma}\label{no31}
We may suppose the reduced graph $R$ does not contain both a part with exactly $3$ cells and a part with exactly $1$ cell.
\end{lemma}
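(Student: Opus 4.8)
The plan is to eliminate a coexisting ``$3$-cell part and $1$-cell part'' by a single local repartition, and then iterate. Suppose $R$ has a part $A$ with exactly $3$ cells and a part $B$ with exactly $1$ cell (if no $1$-cell part exists the statement is vacuous). By the standing assumption all cells of $A$ share a common size $x$; write $y$ for the size of the cell of $B$. I would replace $A\cup B$ by two new parts $A'$ and $B'$, each consisting of two cells, with all four new cells of (as-equal-as-possible) size $c=(3x+y)/4$. The numbers of parts and of cells of $R$ are unchanged, so $R$ remains $p$-skeleton-free and a profile graph with at most $3$ cells per part; and this move lowers the number of $3$-cell parts by one and the number of $1$-cell parts by one while touching no other part, so iterating terminates with no such pair left.

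It then remains to check that $\N_q(R)$ does not decrease (the argument works for every $q\ge 2$, matching the remark). Let $W:=R-(A\cup B)$. The subgraph $W$ and all edges between $A\cup B$ and $W$ are untouched, and every such edge has weight $1$ both before and after, since it joins vertices in distinct parts. Hence $\N_q(R)=\sum_{j\ge 0}N_j\cdot\N_{q-j}(W)$, where $N_j$ is the total weight of $j$-vertex weighted cliques inside $A\cup B$; the transformation alters only the numbers $N_j$. Since a weighted clique uses at most one vertex per cell, in either configuration the largest weighted clique inside $A\cup B$ has $4$ vertices, so $N_0,N_1$ are unchanged (the vertex count is preserved) and $N_j=0$ for $j\ge 5$ on both sides. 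Thus it suffices to show $N_2,N_3,N_4$ do not decrease.

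A direct count gives, before and after respectively, $N_2=\tfrac32x^2+3xy$ and $5c^2$; $N_3=\tfrac18x^3+\tfrac32x^2y$ and $2c^3$; $N_4=\tfrac18x^3y$ and $\tfrac14c^4$. Substituting $c=(3x+y)/4$ and writing $t=y/x>0$, the three required inequalities reduce to $5t^2-18t+21\ge 0$ (negative discriminant, so always true), $t^3+9t^2-21t+23\ge 0$ (one-variable minimum over $t>0$ equal to $12$, attained at $t=1$), and $(3+t)^4\ge 128t$ (the minimum of $(3+t)^4-128t$ over $t>0$ is positive). Each is elementary. Therefore every $N_j$-difference is nonnegative, and since $\N_{q-j}(W)\ge 0$ for all $j$, we get $\N_q(R')\ge\N_q(R)$, so the step costs nothing (beyond the usual $O(\varepsilon r^q)$ from integrality of cell sizes, well inside the allowed slack). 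Incidentally, Claim~\ref{relative-cell} already gives $y\ge x$, but we do not even need this.

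The work is essentially bookkeeping; the only place demanding care is the $N_4$ comparison, where the crude estimate $c\ge x$ does \emph{not} suffice and one genuinely needs $\min_{t>0}\big((3+t)^4-128t\big)>0$. Everything else---the factorization $\N_q(R)=\sum_j N_j\,\N_{q-j}(W)$, the cubic minimization for $N_3$, and the termination argument---is routine.
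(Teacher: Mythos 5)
Your proof is correct and follows essentially the same strategy as the paper's: replace the $3$-cell part and $1$-cell part with two $2$-cell parts, factor $\N_q(R)=\sum_j N_j\,\N_{q-j}(W)$ using that all edges to $W$ have weight $1$, and verify that $N_2,N_3,N_4$ do not decrease. The paper normalizes $|A\cup B|=1$ and bounds the ``before'' quantities by their one-variable maxima in $x$ (obtaining $3/10$, $32/1225$, $1/2048$ against $5/16$, $1/32$, $1/1024$); you instead set $t=y/x$ and show the polynomial differences $5t^2-18t+21$, $t^3+9t^2-21t+23$, and $(3+t)^4-128t$ are positive for all $t>0$ --- the same computation recast, with the mild bonus that you never invoke Claim~\ref{relative-cell} ($y\ge x$).
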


\begin{proof}
    Suppose that part $i$ contains exactly three cells of (normalized) cardinality $x$ and $j$ contains exactly one cell of cardinality $1-3x$. By Claim~\ref{relative-cell} we can assume $1-3x \geq x$ which implies $x \leq \frac{1}{4}$. 
    We will transform $R$ by replacing the vertices of parts $i$ and $j$ with two parts each containing two cells of cardinality $\frac{1}{4}$. As the total number of parts and cells remains the same, $R$ is still $p$-skeleton-free.

    Let us compute the change in the (normalized) number of edges, triangles and $K_4$ copies  spanned by the vertices in classes $i$ and $j$ under this transformation. Initially, there are $3x(1-3x)+\frac{3}{2}x^2$ edges, $\frac{1}{8}x^3+\frac{3}{2}x^2(1-3x)$ triangles,  and $\frac{1}{8}x^3(1-3x)$ copies of $K_4$. Thus, there are at most $\frac{3}{10}$ edges, $\frac{32}{1225}$ triangles, and $\frac{1}{2048}$ copies of $K_4$. After the transformation, there are $\frac{5}{16}$ edges, $\frac{1}{32}$ triangles, and $\frac{1}{1024}$ copies of $K_4$. Therefore, all three quantities increase under the transformation.
    As every  $K_5$ in $R$ intersects the vertices of parts $i$ and $j$ in at most $4$ vertices, the number of $K_5$ copies  increases.  
\end{proof}

We now distinguish four cases based on the value of $p$, using Lemma~\ref{no31} and Claim~\ref{cell-lemma2} along with $s+t=p-1$ (by Claim~\ref{part-plus-cell}) to determine possible cell profiles. For simplicity we normalize $|V(R)|$ to $1$.

\smallskip

{\bf Case 1:} $p = 9$. 

\smallskip

In this case the possible cell profiles of $R$ are
 $(3,3)$ or $(2,2,1)$. If $(3,3)$ is the cell profile, then all parts contain the same number of cells, thus all cells must be of the same size. Then $R$ has $\frac{6}{16}\left(\frac{1}{6}\right)^5$ copies of $K_5$. If $(2,2,1)$ is the cell profile, then $s=5$, and so (as in the proof of Lemma~\ref{Bisq}) all cells are the same size. Then $R$ has $\frac{1}{4}\left(\frac{1}{5}\right)^5$ copies of $K_5$, which is larger than the number for the cell profile $(3,3)$.
Invoking Lemma~\ref{Bisq} completes this case with the maximum achieved by a graph in $\mathcal{G}(n;5,3)$.

\smallskip

{\bf Case 2:} $p = 10$. 

\smallskip
In this case the possible cell profiles are
$(2,2,2)$ or $(2,1,1,1)$. In the first profile, all parts contain the same number of cells, and in the latter there are $5$ cells. The argument mirrors Case 1 and we get $K_5$ counts of $\frac{6}{4}\left(\frac{1}{6}\right)^5$ and $\frac{1}{2}\left(\frac{1}{5}\right)^5$ respectively. One can check to see the former expression, which corresponds to the former profile, is larger (while the Conjecture~\ref{their-conjecture} would support the latter).
Invoking Lemma~\ref{key-lemma} completes this case with the maximum achieved by a graph in  $\mathcal{G}(n;6,3)$.

\smallskip

{\bf Case 3:} $p = 11$. 

\smallskip
In this case the possible cell profiles are
 $(3,2,2)$, $(2,2,1,1)$, or $(1,1,1,1,1)$. 
If the profile is $(1,1,1,1,1)$, then Claim~\ref{relative-cell} implies that cells are the same size so there are $\left(\frac{1}{5}\right)^5$ copies of $K_5$.
If the profile is $(3,2,2)$ or $(2,2,1,1)$, then cells are potentially of different sizes, so there is an obvious (univariate) optimization problem to be solved.
One can check that the maximum across all three profiles is achieved by $(2,2,1,1)$.
Invoking Lemma~\ref{key-lemma} completes this case with the maximum achieved by a graph in $\mathcal{G}(n;6,4)$.

\smallskip

{\bf Case 4:} $p = 12$. 

\smallskip
In this case the possible cell profiles are $(2,1,1,1,1)$ or $(2,2,2,1)$. Solving the optimization problem for cell sizes in each profile shows that the optimum is achieved by $(2,1,1,1,1)$. Invoking Lemma~\ref{p-large} completes this case with the maximum achieved by a graph in  
$\mathcal{G}(n;6,5)$.
\smallskip

{\bf Case 5:} $p = 13$. 

\smallskip

In this case the possible cell profiles are $(1,1,1,1,1,1)$ or $(2,2,1,1,1)$. In the first case, all parts contain the same number of cells, so cell sizes must be balanced, and $R$ has $\left(\frac{1}{6}\right)^5$ copies of $K_5$. One may solve the optimization problem for the latter profile, finding the optimum is still achieved by the former profile $(1,1,1,1,1,1)$. Invoking Lemma~\ref{p-large} completes this case with the maximum achieved by a graph in $\mathcal{G}(n;6,6)$.

\smallskip

{\bf Case 6:} $p \geq 14$. 

\smallskip

Suppose that $s-t\geq 2$. Then $2s-2 \geq s+t\geq 13$. So $s\geq 8$, and Claim~\ref{cell-lemma2} immediately implies that no part contains three cells.
Therefore, there are parts $i$ and $j$ both containing exactly two cells. By Claim~\ref{relative-cell} we may suppose each of these four cells has cardinality $3x$. By Lemma~\ref{repartition-two-two}, there is a transformation that locally increases the number of edges by $3x^2$ and increases the number of triangles by $10x^3$ and decreases the number of $K_4$ copies by $\frac{81}{4}x^4$ while leaving the number of vertices unchanged.

Let $W$ be the subgraph of $R$ induced on all vertices not in part $i$ or $j$. Clearly, $|V(W)|=1-12x$. By Claim~\ref{relative-cell}, every cell in $W$ has cardinality at least $3x$. Therefore, the number of triangles $\N_3(W)$ in $W$ satisfies
\[
\N_3(W) \geq \frac{1}{2}\binom{s-4}{3}(3x)^3,
\]
as each triangle contains at most one edge of weight $1/2$.
 Similarly, the number of edges $\N_2(W)$ in $W$ satisfies
 \[
 \N_2(W) \geq \left(\binom{s-4}{2}-\frac{1}{2}\left \lfloor \frac{s-4}{2}\right \rfloor\right) (3x)^2,
 \]
 as there are at most $\lfloor \frac{s-4}{2}\rfloor$ parts in $W$ with two cells. 

If every part in $W$ contains exactly two cells, then each cell has cardinality $3x = \frac{1}{s}$. Otherwise, there is a part in $W$ with exactly one cell. Locally investigating a part with two cells of cardinality $3x$ and another part with a single cell of cardinality $y\geq 3x$, one can check via an easy optimization that edge and triangle counts both can increased by rebalancing whenever $3x<\frac{2}{3}y$. Now, each cell is of one of two sizes: $3x$, or $y\leq \frac{9}{2}x$. Let $k$ be the number cells in parts with two cells and $s-k$ the number of remaining cells, each of cardinality $y$. Given our prior normalization we have $3xk+y(s-k)=1$. Substituting  $y \leq \frac{9}{2}x$ and solving for $x$ yields $x\geq \frac{2}{9s-3k}$. As parts $i$ and $j$ each have two cells, $k\geq 4$, so $x\geq \frac{2}{9s-12}$.

Putting this all together, the value of $\N_5(R)$ increases by at least
\begin{align*}
 3x^2\N_3(W) & + 10x^3\N_2(W) - \frac{81}{4} x^4 |V(W)| \\
 & \geq 3x^2 \frac{1}{2}\binom{s-4}{3}(3x)^3 + 10x^3 \left(\binom{s-4}{2}-\frac{1}{2}\left \lfloor \frac{s-4}{2}\right \rfloor\right) (3x)^2 - \frac{81}{4} x^4 (1-12x) \\
 &= x^4\left[ x\left(\frac{81}{2} \binom{s-4}{3} +90\binom{s-4}{2} -45 \left \lfloor \frac{s-4}{2} \right\rfloor +243 \right) - \frac{81}{4} \right] \\
 & \geq x^4\left[ \frac{2}{9s-12}\left(\frac{81}{2} \binom{s-4}{3} +90\binom{s-4}{2} -45 \left \lfloor \frac{s-4}{2} \right\rfloor +243 \right) - \frac{81}{4} \right].
\end{align*}

This value is positive for $s \geq 8$. Indeed, evaluating at $s=8$ yields a positive value, and elementary calculus shows that the resulting function, which is quadratic in $s$, has positive derivative for all $s\geq 8$.
 Therefore, this transformation does not decrease $\N_5(R)$ and so we may assume that
        there is at most one part with two cells, i.e., $s-t \leq 1$. 
Now, invoking Lemma~\ref{p-large} and evaluating the two cases $p=2s+1$ or $p=2s$ yields the theorem with the maximum achieved by a graph in $\mathcal{G}(n;\lceil (p-1)/2 \rceil,\lfloor (p-1)/2 \rfloor)$.

\subsection{Proof of Theorem~\ref{q-plus}}

In this subsection we prove Theorem~\ref{q-plus} which addresses the case when $q+2 \leq p \leq q+4$. Note that  the proof of Theorems~\ref{edge-thm}, \ref{triangle-thm}, \ref{k4-thm}, and \ref{k5-thm} only use part (a) so we may apply them in the proof of part (b).

 For simplicity we normalize the number of vertices in $R$ to $1$.
We begin by proving part (a). 
 If $p=q+2$, then Claim~\ref{part-plus-cell} implies $s+t=p-1=q+1$.  Therefore, as $s \geq q$ and $t\geq 1$, we have $t=1$ and $s=q$.
Lemma~\ref{Bisq} yields the desired result with the maximum achieved by a graph in $\mathcal{G}(n;q,1)$.

    If $p=q+3$, then Claim~\ref{part-plus-cell} implies $s=q+1$ or $s=q$. In the former case, $t=1$, i.e., there is only one part, so the cells are of equal size $\frac{1}{q+1}$. Thus, the (normalized) number of $K_q$ copies is $(q+1)\left(\frac{1}{q+1}\right)^q\left(\frac{1}{2}\right)^{\binom{q}{2}}$.
    Supposing the latter case, $s=q$ and $t=2$, Lemma~\ref{Bisq} implies that the (normalized) number of $K_q$ copies is $\left(\frac{1}{q}\right)^q\left(\frac{1}{2}\right)^{\binom{\lfloor q/2\rfloor}{2}+ \binom{\lceil q/2 \rceil}{2}}= \left(\frac{1}{q}\right)^q\left(\frac{1}{2}\right)^{\binom{q}{2}}2^{\lfloor q/2\rfloor\lceil q/2 \rceil}$. 
    It is straightforward to check that there are more copies of $K_q$ in the case $s=q \geq 2$. So Lemma~\ref{Bisq} yields the desired result, with the maximum achieved by a graph in $\mathcal{G}(n;q,2)$.

\smallskip

It remains to prove part (b). Suppose $p=q+4$. For $q=3,4,5$, 
the statement follows from Theorems~\ref{triangle-thm}, \ref{k4-thm}, and \ref{k5-thm} so assume $q \geq 6$.
By Claim~\ref{part-plus-cell} we have $s+t=p-1=q+3$. Therefore, $s \geq q$ implies $1 \leq t \leq 3$. If $t=1$, then $s=q+2$, which violates our assumption on the structure of $R$ from Claim~\ref{cell-lemma1}. Then assume that $t=2$. This implies $s=q+1$, so some part contains at least $\lceil \frac{q+1}{2} \rceil$ cells. When $s=q+1$, then (\ref{k-bound}) in Claim~\ref{cell-lemma2} implies that the maximum number $k$ of cells in a part satisfies $2^{k-2}\left(\frac{k}{k-1}\right)^{k-1} \leq q+1$ or $k \leq 2$. When $q \geq 6$, and $k \geq \lceil \frac{q+1}{2} \rceil$ this is impossible. 
     Therefore, $t=3$ and $s=q$ and Lemma~\ref{Bisq} yields the desired result, with the maximum achieved by a graph in $\mathcal{G}(n;q,3)$ for $q \geq 3$.

\subsection{\texorpdfstring{General $q$}{text}}

Below we provide a general result showing that Conjecture~\ref{their-conjecture} eventually holds. Note that we make limited effort to improve the constant $5$ below: a new approach or more sophisticated analysis of $\N_{q-2}(R)$ is needed for this method to  yield bounds close to what one might expect are best-possible.

\begin{proof}[Proof of Theorem~\ref{general-q}.]

  Let $q \geq 5$ and $p \geq 5q$. Suppose that $s-t \geq t$.
 As $s+t =p-1$ and $s\geq t$ we have $s\geq \lceil\frac{p-1}{2}\rceil\geq 2.4q$. By Claims~\ref{cell-lemma2} and \ref{cell-lemma1} we may assume that each part of $R$ contains at most two cells.
As such, $R$ must contain two parts $i$ and $j$ each with two cells. Let each of these four cells have $3x$ vertices. By Lemma~\ref{repartition-two-two} there is a transformation that locally increases the number of edges by $3x^2$, the number of triangles by $10x^3$, and decreases the number of $K_4$ copies by $\frac{81}{4}x^4$. 

    Now, let $W$ be the subgraph of $R$ induced on the vertices outside of part $i$ and $j$. As in Claim~\ref{cell-lemma2}, we can double count pairs $(K,u)$ where $K$ is a $(q-3)$-clique in $W$ and $u$ a vertex in $K$. We see that $u$ is in a part with at most two cells, so it has at most one neighbor $v$ in $K$ for which $w(uv)={1/2}$, yielding

    \[(q-3)\N_{q-3}(W)\geq \N_{q-4}(W)(s-q)3x\left(\frac{1}{2}\right).\]

    It is clear that after the local transformation given by Lemma~\ref{repartition-two-two}, $R$ contains at least as many copies of $K_q$ using $q-2$ or more vertices of $W$. We see that the transformation increases the number  of $K_q$ using $q-3$ or $q-4$ vertices from $W$ by:
    \[
    10x^3\N_{q-3}(W)-\frac{81}{4}x^4\N_{q-4}(W),
    \]
    which, after substituting $3x\N_{q-4}(W)$ for its upper bound of $2\frac{q-3}{s-q}\N_{q-3}(W)$, must be positive when $10-(\frac{27}{2})(\frac{q-3}{s-q})$ is positive. It is straightforward to check that this is the case when $s\geq 2.4q$. Therefore, this transformation does not decrease $\N_q(R)$ and so we may assume that there is at most one part with two cells, i.e., $s-t \leq 1$. 
Invoking Lemma~\ref{p-large} and evaluating the two cases $p=2s+1$ or $p=2s$ yields the theorem. 
\end{proof}

\section{Concluding remarks}

We conclude with two more general bounds that may be of use in extending some of the results in this manuscript.
Broadly speaking, when combined with Lemma~\ref{Bisq}, the following proposition implies that if $q$ is large and $p<q+\frac{q}{\log q}+1$ (so $p$ is quite small relative to $q$), a graph in $\mathcal{G}(n;q,p-q-1)$ is asymptotically extremal and Conjecture~\ref{their-conjecture} holds in this case.

\begin{prop}
        For all $0<c<1$ and $q$  large enough, if $p\geq q+c\left(\frac{q}{\log q}\right)+1$, then $t\geq c\left(\frac{q}{\log q}\right)$ where $t$ is the number of parts in $R$.    
\end{prop}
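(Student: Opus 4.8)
The plan is to combine two facts already available about the reduced profile graph $R$: that its number of parts $t$ and its number of cells $s=\sum_i s_i$ satisfy $s+t=p-1$ (Claim~\ref{part-plus-cell}), and that the maximum number of cells in a single part is tightly controlled by Claim~\ref{cell-lemma2}. The point will be that each part of $R$ carries only $O(\log q)$ cells, so $s$ cannot be much larger than $t$, which forces $t$ to be large whenever $s+t=p-1$ is large.

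First I would split on the size of $s$. If $s\le q$, then $t=(p-1)-s\ge (p-1)-q\ge c(q/\log q)$ by the hypothesis on $p$, and there is nothing to prove. So assume $s\ge q+1$, and let $k$ be the largest number of cells in any part of $R$. By Claim~\ref{cell-lemma2}, either $k\le 2$ or inequality (\ref{k-bound}) holds. In the first subcase every part has at most two cells, so $s\le 2t$ and hence $t\ge s/2\ge (q+1)/2>c(q/\log q)$ once $q$ is large. In the second subcase, since $s-q\ge 1$ the right-hand side of (\ref{k-bound}) is at most $q-k+1\le q+1$, so $2^{k-2}\left(\tfrac{k}{k-1}\right)^{k-1}\le q+1$; using $\left(\tfrac{k}{k-1}\right)^{k-1}\ge 2$ this gives $2^{k-1}\le q+1$, i.e.\ $k\le 1+\log_2(q+1)$. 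Thus every part of $R$ has at most $k\le 1+\log_2(q+1)$ cells, so $s\le tk$, and together with $s+t=p-1$ we obtain
\[
t \;\ge\; \frac{p-1}{k+1} \;\ge\; \frac{p-1}{\,2+\log_2(q+1)\,} \;\ge\; \frac{q+c(q/\log q)}{\,2+\log_2(q+1)\,}.
\]

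It then remains to check that this last quantity is at least $c(q/\log q)$; clearing denominators, this is equivalent to $\log q\ge c\bigl(1+\log_2(q+1)\bigr)$, which holds for every fixed $c<1$ once $q$ is large enough. I expect this final comparison to be the only genuinely delicate point of the argument: it is exactly where the hypothesis $c<1$ and the phrase ``$q$ large enough (in terms of $c$)'' are needed, and where the base of the logarithm would have to be watched — everything else is a short estimate. The two structural inputs, Claims~\ref{part-plus-cell} and~\ref{cell-lemma2} (together with Claim~\ref{cell-lemma1}, which guarantees in the first place that each part has at most $q$ cells, so that the dichotomy in Claim~\ref{cell-lemma2} is available), do all the real work and require no new ideas.
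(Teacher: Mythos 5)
Your proof is correct and runs on the same two ingredients as the paper's one: Claim~\ref{part-plus-cell} (giving $s+t=p-1$) and Claim~\ref{cell-lemma2} (which caps the maximum number $k$ of cells in a part at roughly $\log_2 q$ once $s>q$), simply rearranged as the direct bound $t\geq (p-1)/(k+1)$ rather than as the paper's proof by contradiction (assume $t$ small, deduce some part has $\geq(\log q)/c$ cells by averaging, violate Claim~\ref{cell-lemma2}). The one point you flag but leave open — the base of the logarithm — is resolved by reading $\log$ as $\log_2$, which is exactly what the paper needs for its final step $2^{(\log q)/c}>4q+4$ to hold for every $c<1$; with that convention your closing inequality $\log q \geq c\bigl(1+\log_2(q+1)\bigr)$ does hold for all large $q$.
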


\begin{proof}
    Suppose $t<c\left(\frac{q}{\log q}\right)$. Then the number of cells $s$ is at least $q+1$ and there exists a part containing at least $\frac{1}{c}\log q$ cells. For any $0<c<1,$ we have $2^{\frac{1}{c}\log q}>4q+4>2$ for $q$ large enough, which violates Claim~\ref{cell-lemma2}.
\end{proof}

In general, when $p$ is small relative to $q$ (i.e., $p<5q$), the values of $s$ and $t$ for which $\be(n;s,t)$ asymptotically achieves $\rt(n,K_q,K_p,o(n))$ are unknown. We do know that $s+t=p-1$, but as we discuss in Section~\ref{section-counter}, there are many instances in which $s\neq \max\{q,\lceil (p-1)/2\rceil\}$ and Conjecture~\ref{their-conjecture} does not hold. By examining Claim~\ref{cell-lemma2}, one may observe that no part should contain more than $O(\log q)$ many cells, this fact is exploited above. What comes below expands on this idea. Indeed, when $p$ is small relative to $q$ (say, $p<2q$), $s=p-2$, $t=1$ is a feasible output of the symmetrization in Theorem~\ref{w-zykov}. However, we have shown this violates Claim~\ref{cell-lemma1}. We can in general apply Claim~\ref{cell-lemma2} and bound $s$ away from $p$ by a linear factor.

\begin{prop}
        For all  $\varepsilon>0$, there exists $\delta$ satisfying $0<\delta< \frac{1}{2}\varepsilon$ and  $2^{1/\delta}>\frac{4}{\varepsilon}+\frac{2}{\delta}$,  such that whenever $p-1\geq (1+\varepsilon)q$, and $q$ is large enough, then $t\geq \delta q$ where $t$ is the number of parts in $R$.  
\end{prop}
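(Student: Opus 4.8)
The plan is to argue by contradiction using the cell-count bound of Claim~\ref{cell-lemma2}, exactly parallel to the proof of the preceding proposition. First I would fix $\delta>0$ small enough that simultaneously $\delta<\tfrac12\varepsilon$, $\delta<\tfrac12$, and $2^{1/\delta}>\tfrac4\varepsilon+\tfrac2\delta$; such a $\delta$ exists because $2^{1/\delta}$ grows exponentially in $1/\delta$ whereas $\tfrac4\varepsilon+\tfrac2\delta$ grows only linearly, so every sufficiently small $\delta$ works (and any such $\delta$ satisfies the two conditions in the statement). For $q$ large relative to $\varepsilon$ the hypothesis $p-1\ge(1+\varepsilon)q$ forces $p\ge q+3$ and $q\ge 3$, so the entire reduction of Section~\ref{section-proofs} applies to the reduced graph $R$; in particular $s+t=p-1$ by Claim~\ref{part-plus-cell}, and Claims~\ref{cell-lemma1} and~\ref{cell-lemma2} hold for $R$.

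Next I would suppose, for contradiction, that $t<\delta q$. Then
\[
s=p-1-t>(1+\varepsilon)q-\delta q=(1+\varepsilon-\delta)q>\left(1+\tfrac12\varepsilon\right)q,
\]
so in particular $s>q$ and $s-q>\tfrac12\varepsilon q$. Since $R$ has fewer than $\delta q$ parts and more than $q$ cells, by pigeonhole some part contains at least $\lceil s/t\rceil > (1+\varepsilon-\delta)/\delta>1/\delta>2$ cells; let $k\ge 3$ be the number of cells in such a part. Because $k\ge 3$, Claim~\ref{cell-lemma2} gives
\[
2^{k-2}\left(\frac{k}{k-1}\right)^{k-1}-k\ \le\ \frac{q-k+1}{s-q}\ <\ \frac{q}{\tfrac12\varepsilon q}\ =\ \frac{2}{\varepsilon}.
\]

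To contradict this I would bound the left-hand side below. For $k\ge 3$ one has $\left(\tfrac{k}{k-1}\right)^{k-1}=\left(1+\tfrac1{k-1}\right)^{k-1}\ge\left(\tfrac32\right)^2=\tfrac94$, so the left-hand side is at least $\tfrac{9}{16}2^{k}-k$. The map $m\mapsto\tfrac{9}{16}2^{m}-m$ is strictly increasing for $m\ge 2$, and $k>1/\delta>2$, so the left-hand side exceeds $\tfrac{9}{16}2^{1/\delta}-\tfrac1\delta$. By the choice of $\delta$,
\[
\tfrac{9}{16}2^{1/\delta}>\tfrac{9}{16}\left(\tfrac4\varepsilon+\tfrac2\delta\right)=\tfrac{9}{4\varepsilon}+\tfrac{9}{8\delta}>\tfrac2\varepsilon+\tfrac1\delta,
\]
so $\tfrac{9}{16}2^{1/\delta}-\tfrac1\delta>\tfrac2\varepsilon$, which contradicts the previous display. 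Hence $t\ge\delta q$, as required.

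There is no substantive obstacle here — the argument is a short calculation mimicking the preceding proposition — but it is worth flagging the one place where care is needed: the crude estimate $\left(\tfrac{k}{k-1}\right)^{k-1}\ge 2$ is just barely insufficient to make the constants $\tfrac4\varepsilon$ and $\tfrac2\delta$ in the hypothesis on $\delta$ do their job, so one must instead use $\left(\tfrac{k}{k-1}\right)^{k-1}\ge\tfrac94$, which is legitimate precisely because the pigeonhole step delivers a part with at least three cells.
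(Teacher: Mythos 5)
Your proof is correct and takes essentially the same route as the paper's: assume $t<\delta q$, use $s+t=p-1$ together with $\delta<\tfrac12\varepsilon$ to get $s-q>\tfrac12\varepsilon q$, find a part with more than $1/\delta>2$ cells by pigeonhole, and contradict inequality~(\ref{k-bound}) of Claim~\ref{cell-lemma2}. One side remark worth correcting: your closing caveat that the crude estimate $\bigl(\tfrac{k}{k-1}\bigr)^{k-1}\ge 2$ is ``just barely insufficient'' is not right --- it suffices. With it the left side of~(\ref{k-bound}) is at least $2^{k-1}-k$, and since $m\mapsto 2^{m-1}-m$ is strictly increasing for $m\ge 2$ and $k>1/\delta>2$, this exceeds $2^{1/\delta-1}-1/\delta$, which is greater than $2/\varepsilon$ immediately from the defining inequality $2^{1/\delta}>\tfrac4\varepsilon+\tfrac2\delta$ (divide by $2$ and subtract $1/\delta$). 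That is precisely the chain the paper uses; your sharper bound $\tfrac94$ is valid but unnecessary. (Also, your added constraint $\delta<\tfrac12$ is redundant: $2^{1/\delta}>2/\delta$ already forces $1/\delta>2$.)
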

\begin{proof}
    Suppose not, so $t<\delta q$. Then, as $p-1=s+t$, we have that $s$, the number of cells in $R$, satisfies
    $s> (1+\varepsilon -\delta)q\geq q$. 
    Let $k$ be the maximum number of cells in a part of $R$.
By averaging, there is a part containing at least $\frac{s}{t} > \frac{q}{\delta q} = \frac{1}{\delta}>2$ cells. 
Given the definition of $\delta$, one can check that
\[
2^{k-2} \left(\frac{k}{k-1}\right)^{k-1} - k > 2^{1/\delta -1} - \frac{1}{\delta} > \frac{2}{\varepsilon} > \frac{q-k+1}{s-q}.
\]
which violates (\ref{k-bound}) in Claim~\ref{cell-lemma2}.
\end{proof}

\bibliographystyle{hplain}

\end{document}